\newtheorem{thm}{Theorem}[section]
\newtheorem{lem}[thm]{Lemma}
\newtheorem{cor}[thm]{Corollary}
\newtheorem{prop}[thm]{Proposition}
\theoremstyle{definition}
\newtheorem{exmp}[thm]{Example}
\newtheorem{defn}[thm]{Definition}
\theoremstyle{remark}
\newtheorem{rem}[thm]{Remark}
\newcommand{\Ext}{\operatorname{Ext}}
\newcommand{\Hom}{\operatorname{Hom}}
\newcommand{\add}{\operatorname{add} }
\newcommand{\pd}{\operatorname{pd} }
\newcommand{\rep}{\operatorname{rep} }
\newcommand{\ind}{\operatorname{ind} }
\newcommand{\Gen}{\operatorname{Gen}}
\newcommand{\Cogen}{\operatorname{Cogen}}
\newcommand{\modu}{\operatorname{mod}\text{-} }
\newcommand{\dimvec}{\underline{\operatorname{dim} }}
\begin{document}
\title[The number of arrows in the quiver of tilting modules]{The number of arrows in the quiver of tilting modules over a path 
algebra of type $A$ and $D$}
\author{Ryoichi Kase}
\address{Department of Pure and Applied Mathematics
Graduate School of Information Science and Technology
,Osaka University, Toyonaka, Osaka 560-0043, Japan}
\email{r-kase@cr.math.sci.osaka-u.ac.jp}
\date{}

\begin{abstract}
Happel and Unger defined a partial order on the set of basic tilting modules. The tilting quiver 
 is the Hasse diagram of the poset of basic tilting modules. We determine the 
 number of arrows in the tilting quiver over a path algebra of type $A$ or $D$.  
\end{abstract}
\keywords{Tilting quiver;Representations of Dynkin quivers}
\maketitle 

\tableofcontents

\section*{Introduction}
 In this paper we use the following notations. Let $A$ be a
 finite dimensional algebra over an algebraically closed
 field $k$, and let mod-$A$ be the category of finite 
 dimensional right $A$-modules. For $M\in $mod-$A$ we denote by pd$_{A}M$ the projective
 dimension of $M$, and by $\add M$ the full subcategory
 of direct sums of direct summands of $M$. Let $Q=(Q_{0},Q_{1})$ be a finite connected quiver without loops and cycles, 
 and $Q_{0}$ (resp.$Q_{1}$) be the set of vertices (resp.arrows) of $Q$ (we use this notation for an arbitrary quiver).  We denote
 by $kQ$ the path algebra of $Q$ over $k$, and by $\rep Q$ the category of
 finite dimensional representations of the quiver $Q$ which is category equivalent to mod-$kQ$.
 For $M\in \rep Q$, denote by $M_{a}$
 the vector space of $M$ associated to a vertex $a$, and denote by $M_{a\rightarrow b}$ the
 linear map $M_{a}\rightarrow M_{b}$ of $M$.
 For a vertex $a$ of $Q$, let $\sigma _{a}Q$ be the quiver obtained from $Q$ by
 reversing all arrows starting at $a$ or ending at $a$. A module $T\in \modu A$ is called a tilting
  module provided the following three conditions are satisfied:\\
  (a) $\pd T<\infty$,\\
  (b) $\Ext ^{i}(T,T)=0$ for all $i>0$,\\
  (c) there exists an exact an sequence
   \[0\longrightarrow A\longrightarrow T_{0}\longrightarrow T_{1}\longrightarrow \cdots \longrightarrow T_{r}
\longrightarrow 0\ (T_{i}\in \add T)\]
in mod-$A$. In the hereditary case the tilting condition above is equivalent to the following:\\
  (a) $\Ext ^{1}(T,T)=0$,\\
  (b) the number of indecomposable direct summands of $T$ (up to isomorphism) is equal to the 
   number of simple modules.
   
    In section1,
 following \cite{HU1},\cite{HU2},\cite{HU3},
\cite{U}, 
we define a partial order on the set $Tilt(A)$ of all basic tilting modules (up to isomorphism)
 over $A$ and define the quiver of tilting modules $\vec{\mathcal{K}}(A)$.
 In Section2, we explain results from \cite{L}.  In Section3, we first show that the number of
 arrows of $\vec{\mathcal{K}}(kQ)$ is equal to the number of arrows of $\vec{\mathcal{K}}(kQ^{'})$ 
 if $Q$ and $Q^{'}$ share the same underlying graph 
 by applying the results from Section2. 
  Then  we determine the number of arrows 
 of $\vec{\mathcal{K}}(kQ)$ for any Dynkin quiver Q of type $A$ or $D$. Note that the underlying graph of
 $\vec{\mathcal{K}}(kQ)$ may be embeded into the exchange graph, or the cluster
 complex, of the corresponding cluster algebra of finite type:the tilting modules of $kQ$
 correspond to positive clusters \cite{BMRRT} and \cite{MRZ}.
 The number of positive clusters when the orientation is alternating is given in \cite[prop. 3.9]{FZ}.
 However, according to experts, the number of edges of this subdiagram of positive clusters is not
 known in the cluster tilting theory. Note also that if we consider
 the similar problem for the exchange graph, it is not 
interesting, because the number of edges is $\frac{n}{2}\times (\text{the number of vertices} )$, and the 
 number of vertices is given in \cite[prop. 3.8]{FZ}.
 The following is known.\cite[prop. 3.9]{FZ}. \[\# \vec{ \mathcal{K}}(kQ)_{0}=\left\{\begin{array}{ll}
\frac{1}{n+1}{2n\choose n} &\text{if}\ Q\ \text{is a Dynkin quiver of type}\ A_{n}, \\\\
\frac{3n-4}{2n}{2(n-1)\choose n-1} &\text{if}\ Q\ \text{is a Dynkin quiver of type}\ D_{n}. \\
\end{array}\right.\]

The main result of this paper is as follows.
\begin{thm} $(1):$Let $Q$ be a quiver without loops and cycles.
 Then $\#\vec{\mathcal{K}}(kQ)_{1}$ is independent of the orientation.\\
  $(2):$\[\# \vec{ \mathcal{K}}(kQ)_{1}=\left\{\begin{array}{ll}
{2n-1\choose n+1} &\text{if}\ Q\ \text{is a Dynkin quiver of type}\ A_{n}, \\\\
(3n-4){2(n-2)\choose n-3} &\text{if}\ Q\ \text{is a Dynkin quiver of type}\ D_{n}. \\
\end{array}\right.\]
\end{thm}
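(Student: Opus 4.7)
For Part~(1), two orientations of the same underlying graph are connected by a sequence of reflections $Q \mapsto \sigma_a Q$ at sinks or sources. The BGP reflection functor furnishes a bijection between indecomposables of $kQ$ and $k\sigma_aQ$ preserving $\Ext^1$-vanishing, hence a bijection on tilting modules. The invariance statements gathered in Section~2 from \cite{L} should then promote this to the identity $\#\vec{\mathcal{K}}(kQ)_1 = \#\vec{\mathcal{K}}(k\sigma_aQ)_1$, even when the two tilting quivers are not literally isomorphic as quivers.

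For Part~(2), I rely on the Happel--Unger theorem that over a hereditary algebra a basic almost complete tilting module has two complements if and only if it is sincere, in which case the two complements are the endpoints of a unique arrow of $\vec{\mathcal{K}}(kQ)$. Double counting pairs $(T,X)$ with $T$ a tilting module and $X$ an indecomposable summand of $T$ yields
\[
n \cdot \#\operatorname{Tilt}(kQ) \;=\; s_{\mathrm{ns}} + 2\,s_{\mathrm{s}},
\]
where $s_{\mathrm{s}} = \#\vec{\mathcal{K}}(kQ)_1$ counts sincere almost complete tilting modules and $s_{\mathrm{ns}}$ counts the non-sincere ones. A non-sincere almost complete tilting module has $n-1$ pairwise $\Ext$-orthogonal indecomposable summands supported on some proper subquiver $Q' \subsetneq Q$; since a partial tilting module over $kQ'$ has at most $|Q'_0|$ summands, one must have $|Q'_0| = n-1$, so
\[
s_{\mathrm{ns}} \;=\; \sum_{a \in Q_0} \#\operatorname{Tilt}\bigl(k(Q\setminus\{a\})\bigr).
\]

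For type $A_n$ with the linear orientation, removing vertex $a$ yields $A_{a-1} \sqcup A_{n-a}$, so $s_{\mathrm{ns}} = \sum_{a=1}^{n} C_{a-1}C_{n-a} = C_n$ by the Catalan recursion, whence $s_{\mathrm{s}} = (n-1)C_n/2 = \binom{2n-1}{n+1}$ after a routine simplification. For type $D_n$, I catalogue the vertex removals: the long-arm end yields $D_{n-1}$; a non-branching interior vertex yields $A_{k-1} \sqcup D_{n-k}$; the trivalent vertex yields $A_{n-3} \sqcup A_1 \sqcup A_1$; each short-arm leaf yields $A_{n-1}$. Writing $t_m = \frac{3m-4}{2m}\binom{2(m-1)}{m-1}$, the sum becomes
\[
t_{n-1} + \sum_{k=2}^{n-3} C_{k-1}\,t_{n-k} + C_{n-3} + 2\,C_{n-1},
\]
and a Catalan/binomial identity, verified by induction on $n$ (or a generating-function argument), should reduce this expression to $n\,t_n - 2(3n-4)\binom{2(n-2)}{n-3}$, yielding the claimed formula.

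The main obstacle is this last identity: unlike the clean Catalan collapse in the $A_n$ case, the $D_n$ sum mixes Catalan numbers with the more intricate $t_m$, and the simplification demands genuine combinatorial work. A secondary subtlety is Part~(1): the BGP bijection does not automatically transport covering relations, so one must rely on the refined statements of \cite{L} collected in Section~2 to conclude the equality of arrow counts under a sink/source reflection.
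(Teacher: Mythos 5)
Your outline for Part~(2) is a genuinely different route from the paper's, and its skeleton is correct: the identity $n\cdot\#Tilt(kQ)=s_{\mathrm{ns}}+2s_{\mathrm{s}}$ with $s_{\mathrm{ns}}=\sum_{a}\#Tilt(k(Q\setminus\{a\}))$ is sound (one should still note that rigidity over $kQ$ and over $k(Q\setminus\{a\})$ agree for modules supported off $a$ --- an Euler-form computation --- and invoke Bongartz completion for the reverse inclusion), and your $D_n$ expression checks out numerically for $n=4,5$. But there are two genuine gaps. The first is that the decisive identity
\[
n\,t_n-\Bigl(t_{n-1}+\sum_{k=2}^{n-3}C_{k-1}t_{n-k}+C_{n-3}+2C_{n-1}\Bigr)=2(3n-4)\binom{2(n-2)}{n-3}
\]
is exactly where the content of the type-$D$ case lies, and you leave it at ``should reduce to.'' This is not a clean Catalan collapse: since $t_m=\bigl(\tfrac{3}{2}(m-1)-\tfrac{1}{2}\bigr)C_{m-1}$, the convolution $\sum_kC_{k-1}t_{n-k}$ forces essentially the same generating-function work the paper does in the corollary following Theorem~\ref{515} (evaluating $\sum_i\tfrac{1}{i}\binom{2(i-1)}{i-1}\binom{2(n-i)}{n-i}$ via the square of $\sum_i\tfrac{1}{i}\binom{2(i-1)}{i-1}X^i$). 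Until that identity is proved, the $D_n$ formula is not established. The second gap is Part~(1): Ladkani's results give poset isomorphisms of the two pieces $Tilt(Q)^x$ and its complement, but the Hasse diagram of the glued poset $(X\sqcup Y,\leq_{\pm}^{f})$ is not determined by the Hasse diagrams of the pieces --- one must count the covering relations crossing between them. The paper's Lemma~\ref{lem3} supplies precisely this, using Coelho--Happel--Unger's complement theory together with the observation (Lemma~\ref{lem2}) that $S(x)\in\Cogen(M)$ for a sink and $\Gen(M)$ for a source, to show the crossing arrows biject with $Tilt(Q)^x$ on both sides. You flag this subtlety but do not resolve it.

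For comparison: the paper never mentions almost complete tilting modules, but works with Proposition~\ref{HU prop} ($\delta(T)=n-\#\{a\mid(\dimvec T)_a=1\}$) and computes $\tfrac12\sum_T\delta(T)$ by classifying tilting modules of type $D$ according to $\delta(T)\in\{n-1,n,n+1\}$ (Lemmas~\ref{lem9}--\ref{514}, Theorem~\ref{515}); your double count trades that representation-theoretic classification for a recursion purely in the numbers $\#Tilt$ of vertex-deleted subquivers, which is conceptually cleaner if the combinatorics is carried through. A bonus you do not exploit: once your double count is in place, $\#\vec{\mathcal{K}}(kQ)_1=\tfrac12\bigl(n\,\#Tilt(kQ)-\sum_a\#Tilt(k(Q\setminus\{a\}))\bigr)$ depends only on the underlying graph because $\#Tilt$ does (Ladkani), so Part~(1) would follow for free and the BGP discussion could be dropped entirely.
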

As a corollary of our theorem, we also have the following.
The exchange graph here is  defined in a purely combinatorial manner
 by using the notion of compatibility degree.
\begin{thm} We consider the root system $\Phi$ (resp.the positive root system $\Phi_{>0}$)
 of type $A_{n}$ or $D_{n}$. Let
 $E(\Phi )$ be the exchange graph of $\Phi $ $($see\ \cite[def. 1.14]{FZ}$)$ and $E(\Phi_{>0} )$
 the subgraph of $E(\Phi )$ whose vertices are positive clusters.
 Then the number of edges in $E(\Phi_{>0} )$ is
\[\left\{\begin{array}{ll}
{2n-1\choose n+1} &\text{if}\ \Phi \ \text{is of type}\ A_{n}, \\\\
(3n-4){2(n-2)\choose n-3} &\text{if}\ \Phi \ \text{is of type}\ D_{n}. \\
\end{array}\right.\]
\end{thm}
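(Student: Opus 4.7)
The plan is to reduce the statement to Theorem~1.1 by transferring the edge count from the tilting quiver of a path algebra to the exchange graph of the root system. First I would fix any orientation $Q$ of the Dynkin diagram underlying $\Phi$; by Theorem~1.1~(1) the eventual answer will not depend on this choice. By Gabriel's theorem the indecomposable $kQ$-modules $M_{\alpha}$ are indexed by the positive roots $\alpha\in\Phi_{>0}$, and by the results of Buan--Marsh--Reineke--Reiten--Todorov \cite{BMRRT} and of Marsh--Reineke--Zelevinsky \cite{MRZ} the assignment $\{\alpha_{1},\ldots,\alpha_{n}\}\longmapsto M_{\alpha_{1}}\oplus\cdots\oplus M_{\alpha_{n}}$ is a bijection between positive clusters of $\Phi$ and basic tilting $kQ$-modules; in particular the vertex sets of $E(\Phi_{>0})$ and $\vec{\mathcal{K}}(kQ)$ are naturally identified.

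The heart of the proof is to show that this bijection also identifies edges. An edge of $E(\Phi_{>0})$ joins two positive clusters differing in a single element, which under the dictionary above corresponds to two basic tilting modules $T$ and $T'$ admitting a common almost complete tilting summand $\bar{T}$ with two distinct complements $M\neq M'$. Following Happel--Unger this is precisely the situation producing a single cover relation, and hence a single arrow, in the poset $Tilt(kQ)$. Consequently the number of edges of $E(\Phi_{>0})$ equals $\#\vec{\mathcal{K}}(kQ)_{1}$, and invoking Theorem~1.1~(2) will then yield the two asserted closed formulas.

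The step that I expect to require the most care is the verification that an exchange-graph edge incident to a positive cluster stays inside $E(\Phi_{>0})$ exactly when the corresponding almost complete tilting module has \emph{two} complements in $\modu kQ$, as opposed to a single one. Here one uses that in the cluster category every almost complete cluster tilting object has exactly two complements, and that whenever only one of them comes from $\modu kQ$ the other is a shift of an indecomposable projective, which corresponds under the BMRRT dictionary to a negative simple root $-\alpha_{i}$; hence such a mutation produces a non-positive cluster and does not contribute an edge to $E(\Phi_{>0})$. This is the one place where the distinction between the full exchange graph and its positive part really enters the argument; once this matching is firmly in place, the remainder of the proof is bookkeeping.
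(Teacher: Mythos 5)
Your proposal is correct and takes essentially the same route as the paper: the paper simply chooses the alternating orientation $Q$ and cites \cite[cor.~4.12]{MRZ} to identify $E(\Phi_{>0})$ with the underlying graph of $\vec{\mathcal{K}}(kQ)$, then applies the main theorem. Your argument just unpacks the content of that citation (the positive-cluster/tilting-module dictionary and the matching of exchanges with shared almost complete tilting modules) rather than quoting it.
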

For the proof, let $Q$ be the alternating Dynkin quiver associated to $\Phi $.
 Then $($by \cite[cor. 4.12]{MRZ}$)$ 
 $E(\Phi_{>0} )$ coincides with the underlying graph of $\vec{\mathcal{K}}(kQ)$.

\section{Preliminaries}

In this section we define a partial order on tilting modules. First, for a tilting module $T$, we define the right 
 perpendicular category\[T^{\perp }=\{X\in \text{mod}\text{-} A\mid \mathrm{Ext}_{A}^{>0}(T,X)=0\}.\]
\begin{lem}$($\text{cf}.\cite[lemma2.1 (a)]{HU2}$)$ For tilting modules $T$, $T^{'}$ the following conditions are equivalent:\\
$\mathrm{(1)}:T^{\perp }\subset T^{'\perp }$, \\
$\mathrm{(2)}:T\in T^{'\perp }.$
\end{lem}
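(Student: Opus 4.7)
The direction $(1) \Rightarrow (2)$ is immediate: tilting axiom (b) forces $T \in T^{\perp}$, and then the inclusion $T^{\perp} \subseteq T'^{\perp}$ from (1) gives $T \in T'^{\perp}$.

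For $(2) \Rightarrow (1)$, fix $X \in T^{\perp}$ and aim for $\Ext^{>0}(T', X) = 0$. My plan is a dimension-shifting argument along a finite $\add T$-coresolution of $X$:
\[
0 \longrightarrow X \longrightarrow T^{0} \longrightarrow T^{1} \longrightarrow \cdots \longrightarrow T^{n} \longrightarrow 0, \qquad T^{j} \in \add T.
\]
The existence of such a coresolution for objects in $T^{\perp}$ is a classical consequence of the tilting axioms (it can be built out of the canonical coresolution $0 \to A \to T_{0} \to \cdots \to T_{r} \to 0$ of axiom (c) together with a projective resolution of $X$, in the style of Miyashita/Brenner--Butler), and an inductive bookkeeping using $\Ext^{>0}(T, T^{k}) = 0$ shows that each cosyzygy $Y^{k} := \operatorname{coker}(T^{k-1} \to T^{k})$ again lies in $T^{\perp}$. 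Hypothesis (2) then gives $\Ext^{>0}(T', T^{j}) = 0$ for every $j$; slicing the coresolution into short exact sequences $0 \to Y^{k-1} \to T^{k-1} \to Y^{k} \to 0$ and applying $\Hom(T', -)$, the long exact sequences collapse and propagate the vanishing backwards from the terminal term $T^{n} \in T'^{\perp}$ to $X$.

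In the hereditary setting underlying this paper the argument simplifies drastically, and this is the version I would actually write down. There one has $T^{\perp} = \Gen T$, so every $X \in T^{\perp}$ fits in a short exact sequence $0 \to K \to T^{0} \to X \to 0$ with $T^{0} \in \add T$. Applying $\Hom(T', -)$ yields
\[
\Ext^{1}(T', T^{0}) \longrightarrow \Ext^{1}(T', X) \longrightarrow \Ext^{2}(T', K) = 0,
\]
where the right-hand vanishing is the hereditary vanishing of $\Ext^{2}$, and the left term is zero by hypothesis (2) since $T^{0} \in \add T$. Hence $\Ext^{1}(T', X) = 0$, and this is all one needs when $A = kQ$ is hereditary.

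The main obstacle lies in the general (non-hereditary) case: producing the finite $\add T$-coresolution and verifying that the connecting maps in the dimension-shift actually kill the Ext groups at each stage (and not just shift them around) is the technical heart of Happel--Unger's treatment. The hereditary reduction above bypasses all of this, which is why I would present the proof in the one-step form suited to the path-algebra setting of the paper.
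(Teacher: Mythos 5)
The paper offers no proof of this lemma; it is quoted directly from \cite[Lemma 2.1(a)]{HU2}, so there is no in-text argument to compare against. Your direction $(1)\Rightarrow(2)$ and your hereditary version of $(2)\Rightarrow(1)$ are both correct: for a hereditary algebra the identity $T^{\perp}=\Gen T$ is the classical Bongartz/Brenner--Butler fact, so the surjection $T^{0}\rightarrow X$ with $T^{0}\in\add T$ exists, and the three-term exact sequence you write down does kill $\Ext^{1}(T',X)$, which is all there is to check when $A=kQ$. Since the paper only ever applies the lemma to path algebras, this version is adequate for its purposes.

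Your sketch of the general case, however, contains a genuine error rather than merely suppressed technicalities. A module $X\in T^{\perp}$ does not in general admit a finite $\add T$-coresolution $0\to X\to T^{0}\to\cdots\to T^{n}\to 0$: already for $T=A$ (so $T^{\perp}=\modu A$) this would force every module to embed into a projective module, which fails for the simple module at the source of $1\to 2$. The sequence $0\to A\to T_{0}\to\cdots\to T_{r}\to 0$ of axiom (c) coresolves $A$, not an arbitrary object of $T^{\perp}$. The correct statement is the dual one: $X\in T^{\perp}$ admits an $\add T$-\emph{resolution} $\cdots\to T_{1}\to T_{0}\to X\to 0$ with all syzygies again in $T^{\perp}$ (the universal map $T^{(\Hom(T,X))}\to X$ is onto because $\Ext^{1}(C_{1},X)=0$ for the cosyzygy $C_{1}$ of $A$, by shifting along axiom (c)). Dimension shifting along this resolution gives $\Ext^{i}(T',X)\cong\Ext^{i+k}(T',K_{k})$, and one concludes from $\pd T'<\infty$, not by ``propagating backwards from the terminal term.'' Indeed, even if your coresolution existed, the backwards propagation breaks at $\Ext^{1}$: from $0\to Y^{k}\to T^{k}\to Y^{k+1}\to 0$ one only gets $\Ext^{1}(T',Y^{k})\cong\operatorname{coker}\left(\Hom(T',T^{k})\to\Hom(T',Y^{k+1})\right)$, which vanishes only if the coresolution stays exact under $\Hom(T',-)$ --- precisely what you cannot assume.
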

Recall that $Tilt(A)$ is the set of basic tilting modules of $A$.
\begin{defn} We define a partial order on $Tilt(A)$ by
\[T\leq T^{'}\stackrel{def}{\Longleftrightarrow }T^{\perp }\subset T^{'\perp }\Longleftrightarrow T\in T^{'\perp },\] 
for $T,T^{'}\in Tilt(A)$.
\end{defn}
 By definition, $A_{A}$ is the unique maximal element of $(Tilt(A),\leq )$. On the other hand, $(Tilt(A),\leq )$
 does not always admit a minimal element.\\
\begin{defn}(cf.\cite{U}) Let $\mathcal{C}$ be a full subcategory of mod-$A$ which is closed under direct sums, direct summands and 
isomorphisms. The subcategory $\mathcal{C}$ is called $contravariantly\ finite$ in $\modu A$, if every $X\in \modu A$ has a right
$\mathcal{C}$-approximation, i.e.there is a morphism $F_{X}\rightarrow X$ with $F_{X}\in \mathcal{C}$
 s.t.the induced morphism
 Hom$_{A}(C,F_{X})\rightarrow \mathrm{Hom}_{A}(C,X)$ is surjective for all $C\in \mathcal{C}$.
\end{defn}
\begin{thm}$($cf.\cite[thm 3.3 and cor 3.4]{HU1}$)$ $(Tilt(A),\leq )$ has a minimal element
 if and only if $\mathcal{P}^{< \infty}$ $($the full subcategory
 of $\modu A$ of modules with finite projective dimension$)$ is contravariantly finite in $\modu A$.
 Moreover the minimal element
 is unique if it exists.
\end{thm}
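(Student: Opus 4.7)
The plan is to follow the Happel--Unger/Auslander--Reiten route, whose essential ingredient is that $\mathcal{P}^{<\infty}$ is a \emph{resolving} subcategory of $\modu A$: it contains $A_A$, and is closed under extensions, direct summands, and kernels of surjections. For such subcategories, the Auslander--Reiten theorem says that contravariant finiteness is equivalent to the existence of ``enough'' Ext-projectives, and this is exactly the bridge between the two sides of the equivalence.

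For the implication $(\Leftarrow)$, suppose $\mathcal{P}^{<\infty}$ is contravariantly finite. Let $T$ be the basic module whose indecomposable summands are a complete set of indecomposable Ext-projectives of $\mathcal{P}^{<\infty}$. I would verify the tilting axioms one by one: (a) is immediate from $T \in \mathcal{P}^{<\infty}$; (b) is Ext-projectivity; (c) is obtained by iteratively applying right $\mathcal{P}^{<\infty}$-approximations starting from $A_A$ and using that the kernels stay in $\mathcal{P}^{<\infty}$. The count of indecomposable summands matches the number of simples because the indecomposable projectives are always Ext-projective, and the remaining Ext-projectives are supplied by the approximation sequences of the ``boundary'' modules (Auslander--Reiten, \cite{AR} type argument). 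Minimality of $T$ is then immediate from Lemma~1.1: any other basic tilting module $T'$ lies in $\mathcal{P}^{<\infty}$, hence $\Ext^{>0}(T',T)=0$ by Ext-projectivity of $T$, so $T \leq T'$.

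For $(\Rightarrow)$, suppose $T_{\min}$ exists. The strategy is to show $\add T_{\min}$ coincides with the indecomposable Ext-projectives of $\mathcal{P}^{<\infty}$, whence every $X \in \modu A$ acquires a right $\mathcal{P}^{<\infty}$-approximation by constructing, inductively on $\pd$, an $\add T_{\min}$-coresolution and splicing. To identify $\add T_{\min}$ with the Ext-projectives, I would argue by contradiction: if some $M \in \mathcal{P}^{<\infty}$ is Ext-projective but not in $\add T_{\min}$, one uses an exchange along $T_{\min}$ (replacing one summand) to produce a tilting module strictly smaller than $T_{\min}$, contradicting minimality. Uniqueness is automatic: two minimal elements $T_1, T_2$ satisfy $T_1 \leq T_2$ and $T_2 \leq T_1$, so $T_1^{\perp} = T_2^{\perp}$ and a tilting module is determined by its perpendicular category.

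The main obstacle, in my view, is the $(\Rightarrow)$ direction and in particular the identification of $\add T_{\min}$ with the Ext-projectives of $\mathcal{P}^{<\infty}$. The $(\Leftarrow)$ direction is essentially an application of the Auslander--Reiten machinery, but the converse requires extracting homological data from the order-theoretic minimality, which is delicate: one must control exchanges of summands of $T_{\min}$ and compare the resulting $^{\perp}$-categories with $\mathcal{P}^{<\infty}$ in a way that is sensitive to the infinite projective dimension regime.
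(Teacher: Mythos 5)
First, note that the paper does not prove this statement: it is quoted from Happel--Unger \cite[Thm 3.3 and Cor 3.4]{HU1} and used as a black box, so there is no internal argument to compare with. Your proposal therefore has to be measured against the actual Happel--Unger/Auslander--Reiten proof, and there it contains a genuine error rather than just a gap.

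The problem is the choice of Ext-projectives. Since $\mathcal{P}^{<\infty}$ is resolving, its Ext-projective objects are exactly the projective modules: if $E\in\mathcal{P}^{<\infty}$ satisfies $\Ext^{1}(E,X)=0$ for all $X\in\mathcal{P}^{<\infty}$, then the syzygy sequence $0\to\Omega E\to P\to E\to 0$ has $\Omega E\in\mathcal{P}^{<\infty}$ and hence splits, so $E$ is projective. Thus the module $T$ you construct in the $(\Leftarrow)$ direction is just $A_{A}$, which is the \emph{maximal} element of $(Tilt(A),\leq)$, not the minimal one. The same sign error appears in your minimality check: Ext-projectivity of $T$ yields $\Ext^{>0}(T,T')=0$, i.e.\ $T'\in T^{\perp}$, i.e.\ $T'\leq T$ --- again maximality. (Recall $T\leq T'$ means $T\in T'^{\perp}$, i.e.\ $\Ext^{>0}(T',T)=0$.) The correct candidate is the direct sum of the indecomposable \emph{Ext-injectives} of $\mathcal{P}^{<\infty}$, generalizing $DA$ in the finite-global-dimension case; the Auslander--Reiten input is that contravariant finiteness of the resolving subcategory $\mathcal{P}^{<\infty}$ forces it to have exactly $n$ indecomposable Ext-injectives whose sum is a tilting module. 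With that replacement the minimality check does go through: every tilting $T'$ lies in $\mathcal{P}^{<\infty}$, and Ext-injectivity of $T$, promoted from $\Ext^{1}$ to $\Ext^{>0}$ by dimension shift inside the resolving subcategory, gives $\Ext^{>0}(T',T)=0$, hence $T\leq T'$. The same projective/injective swap has to be undone in your $(\Rightarrow)$ direction (one identifies $\add T_{\min}$ with the Ext-injectives and invokes the Auslander--Reiten result that ${}^{\perp}T_{\min}$ is contravariantly finite), and also in your construction of axiom (c): a right $\mathcal{P}^{<\infty}$-approximation of $A_{A}$ is the identity, since $A_{A}\in\mathcal{P}^{<\infty}$; what is needed there are left $\add T$-approximations. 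Your uniqueness argument is fine, being just antisymmetry of the partial order.
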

 Next we define the $tilting\ quiver$ $\vec{\mathcal{K}}(A)$, and recall its some properties. Let $\ind A$ be a category of indecomposable modules in $\modu A$. 
\begin{defn} The $tilting\ quiver$ $\vec{\mathcal{K}}(A)=(\vec{\mathcal{K}}(A)_{0},\vec{\mathcal{K}}(A)_{1})$ is defined
as follows.\\
(1)$\vec{\mathcal{K}}(A)_{0}=Tilt(A),$ \\
(2)$T^{'}\rightarrow T$ in $\vec{\mathcal{K}}(A)$, for $T,T^{'}\in Tilt(A)$, if $T^{'}=M\oplus X,\ T=M\oplus Y$
with $X,Y\in \ind A$ and there is a non-split short exact sequence
\[0\longrightarrow X\longrightarrow \widetilde{M} \longrightarrow Y\longrightarrow 0\]
with $\widetilde{M} \in \add M$.
\end{defn}
\begin{thm}$($cf.\cite[thm 2.1]{HU1}$)$ $\vec{\mathcal{K}}(A)$ is the $Hasse$-$diagram$ of $(Tilt(A),\leq )$
$($i.e.if \ $T\rightarrow T^{'}\in \vec{\mathcal{K}}(A)_{1}$ and $T\geq T^{''}\geq T^{'}$ then $T^{''}=T$ 
 or $T^{''}=T^{'})$.
\end{thm}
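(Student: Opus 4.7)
The plan is to verify that the arrows of $\vec{\mathcal{K}}(A)$ are exactly the cover relations of $(Tilt(A),\leq)$. For an arrow $T'\to T$, write $T'=M\oplus X$ and $T=M\oplus Y$ with non-split short exact sequence
\[0\to X\to \widetilde M\to Y\to 0,\ \ \widetilde M\in\add M.\]
By Lemma 1.1 the inequality $T\leq T'$ is equivalent to $T\in T^{'\perp}$. Since $T'$ is tilting, $\Ext^{i}_{A}(T',M)=\Ext^{i}_{A}(T',X)=0$ for $i>0$, so the only thing to check is that $\Ext^{i}_{A}(T',Y)=0$ for $i>0$. Applying $\Hom_{A}(T',-)$ to the sequence, the flanking groups $\Ext^{i}_{A}(T',\widetilde M)$ and $\Ext^{i+1}_{A}(T',X)$ both vanish (the former since $\widetilde M\in\add M\subset\add T'$, the latter by Ext-rigidity of $T'$), so the middle group vanishes too. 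Non-splittingness of the sequence gives $\Ext^{1}_{A}(Y,X)\neq 0$, hence $X\not\cong Y$ (else $T'$ would fail to be rigid), and so $T'\neq T$.

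For the cover property, suppose a tilting $T''$ satisfies $T'>T''>T$. From $T''\in T^{'\perp}$ and $T\in T^{''\perp}$ one reads off $\Ext^{i}_{A}(M,T'')=0=\Ext^{i}_{A}(T'',M)$ for all $i>0$, so $M\oplus T''$ is Ext-rigid. A rigid module of finite projective dimension has at most $n=|Q_{0}|$ pairwise non-isomorphic indecomposable summands, with the maximum attained precisely by tilting modules; since $T''$ already attains this bound, every indecomposable summand of $M$ must already appear in $T''$, forcing $T''=M\oplus Z$ for some indecomposable $Z$. The Riedtmann--Schofield/Happel--Unger theorem on complements to an almost complete tilting module then gives $Z\in\{X,Y\}$, whence $T''\in\{T',T\}$, contradicting strictness. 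The converse implication --- every cover relation arises from such an arrow --- is handled analogously: for a cover $T'>T$, set $\add M:=\add T'\cap\add T$, and use the same complement theorem to argue that the leftover summands on each side must be indecomposable (otherwise a partial exchange produces a proper intermediate tilting module), with the required short exact sequence obtained from a minimal right $\add M$-approximation of $Y$.

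The main obstacle is the cover step; it rests on two substantive background results --- the bound on the number of indecomposable summands of a rigid module of finite projective dimension, and the two-complement theorem for almost complete tilting modules --- and without them the argument does not close. The Ext-vanishing computation in the first step is, by contrast, routine once the short exact sequence is in hand.
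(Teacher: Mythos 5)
The paper does not prove this statement at all: it is quoted from Happel--Unger \cite[thm 2.1]{HU1} with a ``cf.''\ citation, so there is no in-paper proof to compare yours against. Judged on its own, the first half of your argument (an arrow $T'\to T$ forces $T'>T$) is correct and complete: the long exact sequence obtained from $\Hom_A(T',-)$ applied to $0\to X\to\widetilde M\to Y\to 0$ does give $\Ext^{i}_A(T',Y)=0$, and non-splitness rules out $X\cong Y$. The reduction in the cover step is also sound as far as it goes: $M\oplus T''$ is rigid of finite projective dimension, the bound of $n$ on the number of non-isomorphic indecomposable summands of such a module forces $T''=M\oplus Z$ with $Z$ indecomposable, so $Z$ is a complement of the almost complete tilting module $M$.

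The genuine gap is the final appeal to a ``two-complement theorem'' to conclude $Z\in\{X,Y\}$. The Riedtmann--Schofield/Bongartz-type statement that an almost complete tilting module has at most two complements is a theorem only in the classical setting ($\pd\le 1$, in particular hereditary $A$). The statement you are proving is for arbitrary finite-dimensional $A$ with tilting modules of arbitrary finite projective dimension, and in that generality an almost complete tilting module can have any number of complements $X_0,\dots,X_r$ (Coelho--Happel--Unger exhibit such examples); they form a chain linked by non-split exact sequences $0\to X_{i+1}\to \widetilde M_i\to X_i\to 0$ with $\widetilde M_i\in\add M$. What you actually need is the finer fact that the given exact sequence identifies $X$ and $Y$ as \emph{consecutive} members of this chain, so that no complement $Z$ sits strictly between them --- and that is essentially the content of \cite[thm 2.1]{HU1} itself, not something the crude count of complements delivers. (If you restrict to hereditary $A$, which is all this paper uses, your argument does close.) The converse direction --- that every cover relation is realized by such a short exact sequence --- is likewise only gestured at; making ``a partial exchange produces a proper intermediate tilting module'' precise again requires the chain structure of the set of complements, i.e.\ the same Coelho--Happel--Unger machinery.
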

\begin{prop}$($cf.\cite[cor 2.2]{HU1}$)$ If $\vec{\mathcal{K}}(A)$ has a finite component $\mathcal{C}$,
 then $\vec{\mathcal{K}}(A)=\mathcal{C}$.
\end{prop}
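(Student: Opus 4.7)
The plan hinges on the Happel--Unger exchange theorem for tilting modules and the fact, noted after Definition~1.2, that $A_{A}$ is the unique maximum of $(Tilt(A), \leq)$.

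First, since $\mathcal{C}$ is a finite nonempty subset of $Tilt(A)$, I pick a maximal element $T_{0}$ of $(\mathcal{C}, \leq)$ and argue $T_{0} = A_{A}$. Suppose not; then $T_{0} < A_{A}$ strictly. By the Happel--Unger exchange theorem, there is an indecomposable direct summand $X$ of $T_{0}$ and an indecomposable $Y$ not isomorphic to $X$ such that, writing $T_{0} = \overline{T} \oplus X$, the almost complete tilting module $\overline{T}$ admits $T_{1} = \overline{T}\oplus Y$ as a second completion with $T_{0} < T_{1}$ a covering relation in $(Tilt(A),\leq)$. By Theorem~1.6, this covering gives an arrow $T_{1}\to T_{0}$ in $\vec{\mathcal{K}}(A)$, forcing $T_{1}\in \mathcal{C}$ by connectedness and contradicting the maximality of $T_{0}$ in $\mathcal{C}$. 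Hence $A_{A}\in \mathcal{C}$.

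Next, I show that every $T' \in Tilt(A)$ lies in $\mathcal{C}$. Starting at $T'$ and iterating the exchange theorem upward, one builds a chain $T' = S_{0} < S_{1} < \cdots < S_{m} = A_{A}$ in which each consecutive pair is a covering relation in $(Tilt(A),\leq)$. By Theorem~1.6 each step corresponds to an arrow in $\vec{\mathcal{K}}(A)$, so the entire chain lies in the connected component of $A_{A}$, namely $\mathcal{C}$, and in particular $T' \in \mathcal{C}$.

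The main obstacle is ensuring that this upward chain terminates at $A_{A}$ in finitely many steps. This is where the finiteness of $\mathcal{C}$ is essential: once the chain enters $\mathcal{C}$ it remains there, because $\mathcal{C}$ is closed under neighbors in the Hasse diagram, and a finite poset cannot admit an infinite strictly ascending chain. Making this termination argument precise, together with the existence of the cover $T_{0} < T_{1}$ in the first step, is the technical content of \cite[cor~2.2]{HU1}.
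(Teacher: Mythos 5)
The paper itself gives no proof of this proposition --- it is quoted from \cite[cor 2.2]{HU1} --- so there is no in-text argument to compare against; judged on its own, your proposal has a sound first half and a genuine gap in the second. The first step is fine in outline: a maximal element $T_{0}$ of the finite nonempty set $\mathcal{C}$ exists, and if $T_{0}\neq A_{A}$ then the fact that $A_{A}$ is the \emph{unique source} of $\vec{\mathcal{K}}(A)$ (every tilting module $T\neq A_{A}$ admits a summand $Y$ with $Y\in \Gen (T/Y)$, hence an arrow ending at $T$) yields $T_{1}\rightarrow T_{0}$ with $T_{1}>T_{0}$; adjacency puts $T_{1}$ in $\mathcal{C}$ and contradicts maximality. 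Do note, though, that this unique-source statement is itself a theorem of Happel--Unger and not a formal consequence of the exchange property for almost complete tilting modules; you should cite it explicitly rather than fold it into ``the exchange theorem.''

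The second step is where the argument breaks. You start an ascending chain of covers at an arbitrary $T'\in Tilt(A)$ and claim it terminates at $A_{A}$ because ``once the chain enters $\mathcal{C}$ it remains there'' and $\mathcal{C}$ is finite. But the chain begins \emph{outside} $\mathcal{C}$ --- that $T'$ eventually lands in $\mathcal{C}$ is exactly the conclusion you are after --- and nothing you have established forces the chain ever to meet $\mathcal{C}$. Since $Tilt(A)$ may be infinite, an infinite strictly ascending chain of covers $T'=S_{0}<S_{1}<\cdots$ that never reaches $A_{A}$ is not excluded; the finiteness of $\mathcal{C}$ cannot be invoked before the chain is known to touch $\mathcal{C}$, so the termination argument is circular. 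The standard repair is to run the chain in the opposite direction: given $T'$, use the lemma underlying Theorem 1.6 (if $T'<T''$ then there is an arrow $T''\rightarrow T'''$ with $T'\leq T'''<T''$; this is \cite[lemma 2.1]{HU2}) to build a strictly descending chain $A_{A}=T_{0}\rightarrow T_{1}\rightarrow \cdots$ with $T_{i}\geq T'$ for all $i$. This chain starts at $A_{A}\in \mathcal{C}$ and stays in $\mathcal{C}$ because components are closed under adjacency, so finiteness of $\mathcal{C}$ forces it to stop after finitely many steps; it can only stop at a term $T_{m}$ for which $T'<T_{m}$ fails, and since $T'\leq T_{m}$ throughout, this gives $T_{m}=T'$ and hence $T'\in \mathcal{C}$.
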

\begin{cor} $\vec{\mathcal{K}}(A)$ contains at most one sink. Moreover it contains a sink if and only if 
 $\mathcal{P}^{<\infty }(A)$ is contravariantly finite in $\modu A$.
\end{cor}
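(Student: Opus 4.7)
The plan is to reduce the corollary to Theorem 1.5 by identifying sinks of $\vec{\mathcal{K}}(A)$ with minima of the poset $(Tilt(A),\leq)$. By Theorem 1.6, arrows of $\vec{\mathcal{K}}(A)$ realize precisely the covering relations of this poset. Hence if $(Tilt(A),\leq)$ has a minimum $T_{\min}$, no arrow can start at $T_{\min}$ (any such arrow would produce an element strictly below it), and $T_{\min}$ is therefore a sink. Combined with Theorem 1.5 this already gives the implication ``$\mathcal{P}^{<\infty}(A)$ contravariantly finite $\Rightarrow$ a sink exists'' and, via the uniqueness clause of Theorem 1.5, will furnish uniqueness of the sink once the reverse identification is in place.

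For the converse direction and for uniqueness, I would show that any sink $T$ of $\vec{\mathcal{K}}(A)$ must in fact be the minimum of $(Tilt(A),\leq)$. Suppose for contradiction that $T>T^{*}$ strictly for some $T^{*}\in Tilt(A)$. The strategy is to appeal to the standard single-step exchange procedure underlying Theorem 1.6, which produces an indecomposable summand $X$ of $T$, a decomposition $T=M\oplus X$, and a non-split short exact sequence
\[0\longrightarrow X\longrightarrow \widetilde{M}\longrightarrow Y\longrightarrow 0\]
with $\widetilde{M}\in \add M$ such that $M\oplus Y$ is a tilting module strictly below $T$. This yields an outgoing arrow at $T$, contradicting the sink hypothesis. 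Consequently $T$ is the minimum of $(Tilt(A),\leq)$; this simultaneously yields uniqueness of the sink and, through Theorem 1.5, the equivalence between the existence of a sink and the contravariant finiteness of $\mathcal{P}^{<\infty}(A)$.

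The main obstacle is the exchange construction invoked in the second paragraph: given a strict inequality $T>T^{*}$ one must produce a covering relation at $T$, i.e., select the right indecomposable summand $X$ of $T$ to replace. This is the heart of Happel and Unger's analysis of $\vec{\mathcal{K}}(A)$ and would be cited directly from \cite{HU1}, where the exchange sequence is obtained from a right $\add M$-approximation of $X$ and its interaction with the perpendicular category $T^{\perp}$. Once that ingredient is granted, the corollary follows formally from Theorems 1.5 and 1.6 as sketched above.
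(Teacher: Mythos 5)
Your argument is correct and is essentially the derivation the paper intends: one identifies the sinks of $\vec{\mathcal{K}}(A)$ with the minimal elements of $(Tilt(A),\leq)$, where the trivial direction follows from the Hasse-diagram property (Theorem 1.6) and the nontrivial direction (a tilting module with no direct successor is already minimal) is precisely the content of the cited result \cite[thm 3.3]{HU1}, after which Theorem 1.4 supplies both the uniqueness of the sink and the equivalence with contravariant finiteness of $\mathcal{P}^{<\infty}$. The only (harmless) slip is terminological: in the exchange sequence $0\to X\to \widetilde{M}\to Y\to 0$ the map $X\to \widetilde{M}$ is a minimal \emph{left} $\add M$-approximation of $X$, not a right one.
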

\begin{prop}$($cf.\cite[cor 3.6]{HU1}$)$  Let $T\in Tilt(A)$ s.t.End$_{A}(T)$ is representation finite.
 Then $\mathcal{P}^{<\infty }(A)$ is
 contravariantly finite.
\end{prop}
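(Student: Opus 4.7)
The aim is to produce a minimal element of $(Tilt(A),\leq)$, because by Theorem 1.6 this immediately gives the desired contravariant finiteness of $\mathcal{P}^{<\infty}(A)$ (and the uniqueness of such an element is automatic). The strategy is to show that the order-ideal below $T$, namely $\{T'\in Tilt(A):T'\leq T\}$, is finite: a non-empty finite poset has a minimal element, and by transitivity any minimal element of this ideal will automatically be minimal in all of $Tilt(A)$.

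Set $B=\Endhom_{A}(T)$. The key input is the tilting theorem for $T$: the functor $\Hom_{A}(T,-)$ (with its derived enhancement when $\pd T>1$, following Miyashita) restricts to an equivalence between the right perpendicular category $T^{\perp}$ and a suitable full subcategory of $\modu B$. By Lemma 1.3 the order-ideal $\{T'\leq T\}$ coincides with $\{T'\in Tilt(A):T'\in T^{\perp}\}$, and the tilting theorem sends this injectively into $Tilt(B)$ since tilting $A$-modules lying in $T^{\perp}$ are carried to tilting $B$-modules under $\Hom_{A}(T,-)$. Because $B$ is representation finite, $\modu B$ has only finitely many indecomposables, so $Tilt(B)$ is finite, and consequently $\{T'\leq T\}$ is finite.

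A non-empty finite poset has a minimal element $T^{*}$, and such a $T^{*}$ is in fact minimal in all of $Tilt(A)$: if $T''\in Tilt(A)$ satisfies $T''\leq T^{*}$, then transitivity gives $T''\leq T$, so $T''\in\{T'\leq T\}$, and minimality of $T^{*}$ inside this finite subset forces $T''=T^{*}$. Theorem 1.6 then concludes the proof. The main technical obstacle lies in the middle paragraph: when $\pd T>1$ one needs Miyashita's generalization of the Brenner--Butler tilting theorem, together with the verification that $\Hom_{A}(T,-)$ does send tilting $A$-modules in $T^{\perp}$ to tilting $B$-modules. Once that correspondence is in place, the finite-poset wrap-up is routine, and Proposition 1.9 plays no role (a slight bonus, since the latter is the heavier machinery available in Section 1).
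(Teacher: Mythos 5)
The paper offers no proof of this proposition at all: it is quoted verbatim as \cite[cor 3.6]{HU1}, so there is nothing in the text to compare your argument against line by line. That said, your proof is correct and is essentially the argument behind the cited result. The reduction is sound: the set $\{T'\in Tilt(A)\mid T'\leq T\}=\{T'\in Tilt(A)\mid T'\in T^{\perp}\}$ is an order ideal (downward closed by transitivity), it is non-empty since it contains $T$, and a minimal element of a non-empty finite order ideal is minimal in the whole poset; Theorem 1.6 then finishes. For the finiteness step you can get away with slightly less than what you invoke: you do not actually need $\Hom_{A}(T,-)$ to carry tilting modules in $T^{\perp}$ to tilting $B$-modules, only that it is fully faithful on $T^{\perp}$ (which is the content of the Brenner--Butler/Miyashita correspondence). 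Fully faithfulness already implies that it preserves indecomposability and is injective on isomorphism classes, so $T^{\perp}$ contains only finitely many indecomposables when $B=\Endhom_{A}(T)$ is representation finite; since $T^{\perp}$ is closed under direct summands, there are then only finitely many basic tilting modules inside it. This sidesteps the ``main technical obstacle'' you flag in the case $\pd T>1$, where verifying that the image is again a tilting module requires more of Miyashita's machinery than the finiteness argument really uses. With that minor streamlining, your proof stands as a valid reconstruction of the omitted argument.
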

\ \ Let $Q=(Q_{0},Q_{1})$ be a quiver without loops and cycles and $A=kQ$. For $T\in Tilt(A)$, let
\[\begin{array}{lll}
s(T) & = & \#\{T^{'}\in Tilt(A)\mid T\rightarrow T^{'}\ \mathrm{in}\ \vec{\mathcal{K}(kQ)}\} \\
e(T) & = & \#\{T^{'}\in Tilt(A)\mid T^{'}\rightarrow T\ \mathrm{in}\ \vec{\mathcal{K}(kQ)}\} \\
\end{array}\]
 and define $\delta (T)=s(T)+e(T)$.
\begin{prop}
 \label{HU prop}
 $($cf.\cite[prop 3.2]{HU3}$) \delta (T)=n-\#\{a\in Q_{0}\mid (\dimvec T)_{a}=1\}$, where $n=\#Q_{0}$.
\end{prop}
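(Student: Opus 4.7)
The strategy is to count $\delta(T)$ by counting the indecomposable summands of $T$ that participate in an exchange, and then to match these bijectively with the vertices where $(\dimvec T)_a = 1$.

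For each indecomposable summand $X_i$ of $T$, the almost complete tilting module $\bar T_i := T/X_i$ has at most two complements by Riedtmann--Schofield, with exactly two precisely when $\bar T_i$ is sincere -- this is the Happel--Unger dichotomy in the hereditary setting (where sincere and faithful coincide). Each arrow of $\vec{\mathcal{K}}(kQ)$ incident to $T$ corresponds uniquely to such a two-complement exchange at some summand $X_i$, so $\delta(T) = n - |N(T)|$ with $N(T) := \{i \mid \bar T_i \text{ is not sincere}\}$.

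It remains to prove $|N(T)| = |V(T)|$ where $V(T) := \{a \in Q_0 \mid (\dimvec T)_a = 1\}$. Sincereness of $T$ yields a well-defined map $\Phi : V(T) \to N(T)$ sending $a$ to the unique index $i$ with $(\dimvec X_i)_a = 1$. The key combinatorial step is the injectivity of $\Phi$, via the following counting argument: if distinct $a, a' \in V(T)$ were sent to the same $i$, then all summands $X_j$ with $j \ne i$ would vanish at both vertices, so $\bar T_i$ would be an $(n-1)$-summand partial tilting module over the hereditary algebra $k(Q \setminus \{a, a'\})$, which has only $n-2$ simple modules -- impossible by the classical bound that a partial tilting module has at most as many indecomposable summands as simples. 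An analogous count shows that each $i \in N(T)$ has a unique \emph{witness vertex} $a_i$ at which $\bar T_i$ vanishes. For surjectivity of $\Phi$ I would then prove the core claim $(\dimvec X_i)_{a_i} = 1$ for every $i \in N(T)$: this would be done by examining the Ext-vanishing conditions $\Ext^1(X_i, \bar T_i) = 0 = \Ext^1(\bar T_i, X_i)$ and using the explicit hereditary structure to constrain the dimension of the unique complement $X_i$ at the non-sincere vertex to be exactly one.

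The main obstacle is precisely this final surjectivity step -- the structural assertion that in the hereditary case the unique complement of a non-sincere almost complete tilting module has dimension one at its non-sincere vertex. Once this is established, $\Phi$ becomes a bijection and one concludes $\delta(T) = n - |V(T)|$ as claimed.
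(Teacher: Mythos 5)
The paper itself offers no proof of this proposition---it is quoted from \cite[prop 3.2]{HU3}---so there is no in-paper argument to compare with; I will assess your proposal on its own. Your reduction is the standard and correct one: by Riedtmann--Schofield and Happel--Unger each almost complete module $\bar T_i=T/X_i$ has a second complement exactly when it is sincere, each such $i$ contributes exactly one arrow incident to $T$ in the Hasse quiver, so $\delta(T)=n-|N(T)|$. Your injectivity argument for $\Phi$ (and the uniqueness of the witness vertex $a_i$) is also sound: a rigid module supported away from two vertices is rigid over the full subquiver on the remaining $n-2$ vertices (the Euler forms agree on such modules), so it has at most $n-2$ pairwise non-isomorphic indecomposable summands.

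The gap is the one you flag yourself: you never prove that $(\dimvec X_i)_{a_i}=1$ for $i\in N(T)$, i.e.\ that the unique complement of a non-sincere almost complete tilting module has dimension exactly $1$ at the vertex where that module vanishes. Without this you only obtain $|V(T)|\le|N(T)|$, hence only the inequality $\delta(T)\le n-\#\{a\in Q_0\mid(\dimvec T)_a=1\}$, and ``examining the Ext-vanishing conditions'' is a placeholder rather than an argument---Ext-vanishing does not by itself bound an entry of a dimension vector. A clean way to close the gap is through the Grothendieck group: the classes of the $n$ indecomposable summands of the tilting module $T$ form a $\Z$-basis of $K_0(kQ)\cong\Z^{Q_0}$, while the classes of the $n-1$ summands of $\bar T_i$ form a $\Z$-basis of the sublattice $\Z^{Q_0\setminus\{a_i\}}$, because $\bar T_i$ is a tilting module over $k(Q\setminus\{a_i\})$. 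Expanding the determinant of the basis $\{[\bar T_{i,1}],\dots,[\bar T_{i,n-1}],[X_i]\}$ of $\Z^{Q_0}$ along the $a_i$-coordinate gives $\pm(\dimvec X_i)_{a_i}=\pm1$, which is exactly the missing claim. With that inserted, $\Phi$ is a bijection and your proof is complete.
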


\section{A theorem of Ladkani}

In this section, we review \cite{L}. Let $Q$ be a quiver without loops and cycles and let $x$ be a source of $Q$.
Let $Tilt(Q):=Tilt(kQ)$ and define 
\[Tilt(Q)^{x}:=\{T\in Tilt(Q)\mid S(x)\mid T\},\]
where $S(x)$ is the simple module associated to $x$.
\begin{defn} Let $(X,\leq _{X})$,$(Y,\leq _{Y})$ be posets and $f:X\rightarrow Y$ an order-preserving function.
 Then we define 
 the preorder $\leq _{+}^{f}$, $\leq _{-}^{f}$ of $X\sqcup Y$ as follows.
\[a\leq _{+}^{f} b\Longleftrightarrow \left\{\begin{array}{ll}
a\leq _{X} b & \text{if}\ a,b\in X, \\
a\leq _{Y} b & \text{if}\ a,b\in Y, \\
f(a)\leq _{Y} b & \text{if}\ a\in X\ \text{and}\ b\in Y. \\
\end{array} \right.\]
\[a\leq _{-}^{f} b\Longleftrightarrow \left\{\begin{array}{ll}
a\leq _{X} b & \text{if}\ a,b\in X, \\
a\leq _{Y} b & \text{if}\ a,b\in Y, \\
a\leq _{Y} f(b) & \mathrm{if}\ a\in Y\ \mathrm{and}\ b\in X. \\
\end{array} \right.\] 
\end{defn} 
\begin{lem} Define the functors 
\[j^{-1}:\rep Q\longrightarrow \rep (Q\setminus \{x\})\] and 
\[j_{\ast }:\rep (Q\setminus \{x\})\longrightarrow \rep Q,\]
by
\[(j^{-1}M)_{a}=M_{a}\ \ \ (j^{-1}M)_{a\rightarrow b}=M_{a\rightarrow b} \] and 
\[(j_{\ast }N)_{a}=\left\{ \begin{array}{ll}
N_{a} & (a\neq x) \\
\oplus_{x\rightarrow y}N(y) & (a=x) \\
\end{array} \right. ,\ \ \ (j_{\ast }N)_{a\rightarrow b}=\left\{ \begin{array}{ll}
N_{a\rightarrow b} & (a\neq x) \\
(j_{\ast }N)_{x}\stackrel{projection}{\longrightarrow }N_{b} & (a=x) \\
\end{array} \right. .\]
Then $j^{-1}$ and $j_{\ast }$ are exact and $j_{\ast }$ is right adjoint to $j^{-1}$.
\end{lem}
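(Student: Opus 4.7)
The plan has three parts, all of which reduce to pointwise / elementary checks. First, exactness of both functors follows from the fact that exactness in $\rep Q$ (or $\rep(Q\setminus\{x\})$) is tested at each vertex. For $j^{-1}$: the functor is just restriction of the vector-space data to the subquiver $Q\setminus\{x\}$, so any exact sequence $0\to L\to M\to N\to 0$ in $\rep Q$ restricts to an exact sequence at every vertex $a\neq x$, which is precisely the condition for the sequence $0\to j^{-1}L\to j^{-1}M\to j^{-1}N\to 0$ to be exact. For $j_{\ast}$: at vertices $a\neq x$ it is the identity on vector spaces, hence exact; at $a=x$ it is the functor $N\mapsto \bigoplus_{x\to y}N_y$, a finite direct sum of evaluation functors, which is exact because each evaluation is exact and finite direct sums preserve exactness.

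For the adjunction, the task is to exhibit a natural bijection
\[\Hom_{\rep(Q\setminus\{x\})}(j^{-1}M, N)\ \cong\ \Hom_{\rep Q}(M, j_{\ast}N).\]
Given $\phi:j^{-1}M\to N$, I would define $\widehat{\phi}:M\to j_{\ast}N$ by $\widehat{\phi}_a = \phi_a$ for $a\neq x$, and at $a=x$ by letting the component of $\widehat{\phi}_x: M_x\to \bigoplus_{x\to y}N_y$ in the summand indexed by the arrow $x\to y$ be the composition $\phi_y\circ M_{x\to y}$. Because $x$ is a source, the only compatibility conditions at $x$ involve outgoing arrows $x\to y$, and by construction the projection of $\widehat{\phi}_x$ onto $N_y$ equals $(j_{\ast}N)_{x\to y}\circ\widehat{\phi}_x$, so $\widehat{\phi}$ is indeed a morphism of representations. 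The inverse construction sends $\psi:M\to j_{\ast}N$ to the morphism $j^{-1}M\to N$ whose component at $a\neq x$ is $\psi_a$. I would then verify that these two assignments are mutually inverse and natural in both $M$ and $N$ by reading off the definitions.

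There is no real obstacle; the one point that must not be overlooked is the use of the hypothesis that $x$ is a source. This is what ensures that the compatibility conditions for $\widehat{\phi}$ at the vertex $x$ involve only outgoing arrows, whose behavior is controlled by the formula $(j_{\ast}N)_x = \bigoplus_{x\to y}N_y$ together with the projection structure maps. If $x$ had incoming arrows, the object $j_{\ast}N$ would have to be defined differently (this is the origin of the sign $\pm$ in the preorders $\leq_{\pm}^{f}$ introduced just before the lemma, where the dual construction is used for sinks).
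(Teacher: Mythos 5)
Your argument is correct. The paper itself gives no proof of this lemma (it is quoted from Ladkani's paper in the survey Section 2), and what you supply is exactly the standard argument: exactness is checked vertex by vertex, where $j^{-1}$ is restriction and $(j_{\ast}N)_x=\bigoplus_{x\to y}N_y$ is a finite direct sum of evaluations; and the adjunction bijection comes from the universal property of that finite direct sum (= product), using that $x$ is a source so the only compatibility conditions at $x$ involve the outgoing arrows, whose structure maps in $j_{\ast}N$ are the projections. Your closing remark correctly identifies the role of the source hypothesis and why the sink case requires the dual construction ($i_!$, a left adjoint), so nothing essential is missing.
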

 Denote by $\mathcal{D}^{b}(Q)$ the bounded derived category $\mathcal{D}^{b}(\rep Q)$.
\begin{lem} The functors $j^{-1}$ and $j_{\ast }$ induce functors
\[j^{-1}:\mathcal{D}^{b}(Q)\longrightarrow \mathcal{D}^{b}(Q\setminus \{x\}),\ \ \ j_{\ast }:\mathcal{D}^{b}(Q\setminus \{x\})\longrightarrow \mathcal{D}^{b}(Q)\]
with
\[\mathrm{Hom}_{\mathcal{D}^{b}(Q\setminus \{x\})}(j^{-1}M,N)\simeq \mathrm{Hom}_{\mathcal{D}^{b}(Q)}(M,j_{\ast }N),\]
for all $M\in \mathcal{D}^{b}(Q),\  N\in \mathcal{D}^{b}(Q\setminus \{x\})$.
\end{lem}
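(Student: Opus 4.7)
The plan is to promote the abelian-level content of the previous lemma — exactness of $j^{-1}$ and $j_{\ast}$ together with the module-level adjunction — to the bounded derived category using the standard mechanism for exact adjoint pairs. First I would check that $j^{-1}$ and $j_{\ast}$ descend to $\mathcal{D}^{b}$. Termwise application of an exact functor to a bounded complex produces a bounded complex, and exactness together with the long exact cohomology sequence guarantees that acyclic complexes are sent to acyclic complexes, equivalently that quasi-isomorphisms are preserved. Hence both functors pass from the category of bounded complexes to the homotopy category $K^{b}$ and then descend through the Verdier localization at quasi-isomorphisms to give well-defined triangulated functors on $\mathcal{D}^{b}$.

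For the adjunction, I would apply the module-level adjunction componentwise to Hom-complexes. For $M^{\bullet}\in C^{b}(\rep Q)$ and $N^{\bullet}\in C^{b}(\rep(Q\setminus\{x\}))$ the isomorphisms
\[\Hom_{\rep(Q\setminus\{x\})}(j^{-1}M^{p}, N^{q}) \cong \Hom_{\rep Q}(M^{p}, j_{\ast}N^{q})\]
are natural in both variables, so they assemble into an isomorphism of total Hom-complexes and, after taking cohomology in degree zero, produce a natural isomorphism $\Hom_{K^{b}}(j^{-1}M, N) \cong \Hom_{K^{b}}(M, j_{\ast}N)$. To lift this from $K^{b}$ to $\mathcal{D}^{b}$, I would replace $M$ by a bounded projective resolution $P\to M$ in $\rep Q$, which exists because path algebras of finite acyclic quivers have finite global dimension. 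Then $\Hom_{\mathcal{D}^{b}(Q)}(M, j_{\ast}N) = \Hom_{K^{b}}(P, j_{\ast}N)$, and the chain-level adjunction identifies this with $\Hom_{K^{b}}(j^{-1}P, N)$. Since $j^{-1}$ is exact, $j^{-1}P\to j^{-1}M$ remains a quasi-isomorphism, so the latter $\Hom$ group computes $\Hom_{\mathcal{D}^{b}(Q\setminus\{x\})}(j^{-1}M, N)$ as well.

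The main obstacle, modest as it is, lies in bookkeeping the three levels $C^{b}\to K^{b}\to \mathcal{D}^{b}$ and verifying that the chain-level adjunction really passes through Verdier localization. Exactness of both functors is precisely what makes this work: neither functor destroys the class of quasi-isomorphisms being inverted, so the homotopy-level isomorphism of Hom-groups is compatible with localization on both sides, and no derived functors or acyclicity hypotheses on resolutions need to be invoked beyond what is already supplied by the previous lemma.
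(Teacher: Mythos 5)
The paper offers no proof of this lemma: Section~2 is an expository review of Ladkani's results and this statement is simply quoted from \cite{L}, so there is nothing in the source to compare you against line by line. Your argument is the standard one and is essentially sound: exactness lets both functors act termwise on bounded complexes, preserve quasi-isomorphisms, and descend through $K^{b}$ and the Verdier localization; the module-level adjunction assembles into an isomorphism of total Hom-complexes and hence of $\Hom_{K^{b}}$-groups. One step deserves tightening. After replacing $M$ by a bounded complex of projectives $P$, you identify $\Hom_{K^{b}}(P,j_{\ast}N)$ with $\Hom_{K^{b}}(j^{-1}P,N)$ and then assert that the latter computes $\Hom_{\mathcal{D}^{b}(Q\setminus\{x\})}(j^{-1}M,N)$ merely because $j^{-1}P\to j^{-1}M$ is a quasi-isomorphism. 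That only identifies $\Hom_{\mathcal{D}^{b}}(j^{-1}P,N)$ with $\Hom_{\mathcal{D}^{b}}(j^{-1}M,N)$; to pass from $\Hom_{K^{b}}(j^{-1}P,N)$ to $\Hom_{\mathcal{D}^{b}}(j^{-1}P,N)$ you additionally need $j^{-1}P$ to be a complex of projectives. This is true and cheap to justify: $j^{-1}$ is left adjoint to the exact functor $j_{\ast}$, hence preserves projectives (concretely, for the source $x$ one has $j^{-1}P(x)\cong\bigoplus_{x\to y}P'(y)$ and $j^{-1}P(a)\cong P'(a)$ for $a\neq x$). With that one sentence added, your proof is complete and is the argument one would expect behind Ladkani's statement.
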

\begin{lem} The functors $j^{-1}$ and $j_{\ast }$ identify $\rep (Q\setminus \{x\})$
 with the right perpendicular subcategory
\[S(x)^{\perp }=\{M\in \rep Q\mid \mathrm{Ext}^{i}(S(x),M)=0\ for\ all\ i\geq0\}\]
of $\rep Q.$
\end{lem}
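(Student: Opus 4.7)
The plan is to compute $S(x)^\perp$ explicitly and then check that $j_\ast$ lands in it with $j^{-1}$ as a two-sided inverse.

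First I would exploit the hypothesis that $x$ is a source. Since there are no oriented cycles and $x$ has no incoming arrows, the indecomposable projective $P(x)$ fits in a short exact sequence
\[0\longrightarrow \bigoplus_{x\to y} P(y)\longrightarrow P(x)\longrightarrow S(x)\longrightarrow 0,\]
so $\pd S(x)\le 1$. Consequently $\Ext^i(S(x),M)=0$ automatically for $i\ge 2$, and the definition of $S(x)^\perp$ reduces to the vanishing of $\Hom(S(x),M)$ and $\Ext^1(S(x),M)$. Applying $\Hom(-,M)$ to the resolution and identifying $\Hom(P(z),M)$ with $M_z$, one gets a four-term exact sequence whose middle map is precisely the canonical map
\[\varphi_M:\ M_x\longrightarrow \bigoplus_{x\to y} M_y,\qquad v\longmapsto (M_{x\to y}(v))_{x\to y}.\]
Hence $M\in S(x)^\perp$ if and only if $\varphi_M$ is an isomorphism.

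Next I would verify the two containments. For $N\in\rep(Q\setminus\{x\})$, the definition of $j_\ast N$ sets $(j_\ast N)_x=\bigoplus_{x\to y}N_y$ with the outgoing maps equal to the projections, so $\varphi_{j_\ast N}$ is literally the identity. Thus $j_\ast N\in S(x)^\perp$. Conversely, if $M\in S(x)^\perp$ then $\varphi_M$ is an isomorphism, and I would use it to build a natural morphism $\eta_M:M\to j_\ast j^{-1}M$ which is the identity on every vertex $a\neq x$ and equals $\varphi_M$ at $x$; the commutativity with the arrows out of $x$ is immediate because those arrows in $j_\ast j^{-1}M$ are the projections of $\varphi_M$. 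So $\eta_M$ is an isomorphism for every $M\in S(x)^\perp$.

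Finally I would observe that the other composition is trivially the identity: $j^{-1}j_\ast N=N$ from the formulas, since $j^{-1}$ just forgets the value at $x$ and the arrows out of $x$. Combined with the previous paragraph, $j^{-1}$ and $j_\ast$ restrict to mutually inverse equivalences between $\rep(Q\setminus\{x\})$ and $S(x)^\perp$, which is the statement to be proved. I do not anticipate a real obstacle here; the only step that requires a little care is translating the $\Ext$-vanishing into the iso condition for $\varphi_M$, and for that the source hypothesis is essential because it gives the length-one projective resolution of $S(x)$ above.
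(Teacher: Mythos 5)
Your argument is correct and complete. Note that the paper itself gives no proof of this lemma: Section 2 only restates it from Ladkani's paper \cite{L}, so your write-up supplies a self-contained verification rather than an alternative to an argument in the text. The route you take is the natural one: the projective resolution $0\to\bigoplus_{x\to y}P(y)\to P(x)\to S(x)\to 0$ identifies $\Hom(S(x),M)$ and $\Ext^{1}(S(x),M)$ with the kernel and cokernel of $\varphi_{M}:M_{x}\to\bigoplus_{x\to y}M_{y}$, so $S(x)^{\perp}$ is exactly the class of $M$ with $\varphi_{M}$ invertible, and the unit $\eta_{M}$ and counit you exhibit are then visibly isomorphisms. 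One small correction of emphasis: the length-one projective resolution of $S(x)$ is not where the source hypothesis enters --- it holds at every vertex of a path algebra, since $kQ$ is hereditary and $\Rad P(a)=\bigoplus_{a\to b}P(b)$ for any $a$. What the source hypothesis actually buys is that $j_{\ast}$ can be defined at all by the stated formula (there are no arrows into $x$ for which one would have to choose maps into $(j_{\ast}N)_{x}$), and correspondingly that your morphism $\eta_{M}$ only has to commute with the outgoing arrows at $x$, which is the computation you do. With that reattribution the proof stands as written; the naturality of $\eta_{M}$ in $M$, needed to conclude an equivalence of categories rather than just a bijection on isomorphism classes, is immediate since $\eta_{M}$ is built from the structure maps of $M$.
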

\begin{lem} The functor $j_{\ast }$ takes indecomposables of $\rep (Q\setminus \{x\})$
 to indecomposables of $\rep Q$.
\end{lem}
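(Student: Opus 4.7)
The plan is to show that $j_{\ast}$ preserves indecomposability by transferring the question to endomorphism rings and exploiting the adjunction $(j^{-1}, j_{\ast})$ from the earlier lemma. Since we are working with finite dimensional representations of a quiver over a field, a module is indecomposable if and only if its endomorphism ring is local, so it suffices to prove that $\Endhom_{\rep Q}(j_{\ast}N) \simeq \Endhom_{\rep(Q\setminus\{x\})}(N)$ as $k$-algebras.

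The first step is a direct unit computation: I would verify that $j^{-1}j_{\ast}N = N$ for every $N \in \rep(Q\setminus\{x\})$. This is immediate from the explicit formulas, because $(j_{\ast}N)_{a}=N_{a}$ for $a\neq x$, and since $x$ is a source, every arrow of $Q\setminus\{x\}$ is an arrow of $Q$ not starting at $x$, so $(j_{\ast}N)_{a\to b}=N_{a\to b}$ for such an arrow. Thus $j^{-1}$ simply recovers the data that $j_{\ast}$ left untouched.

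The second step is to apply the adjunction with $M=j_{\ast}N$:
\[
\Endhom_{\rep Q}(j_{\ast}N) = \Hom_{\rep Q}(j_{\ast}N, j_{\ast}N) \simeq \Hom_{\rep(Q\setminus\{x\})}(j^{-1}j_{\ast}N, N) = \Endhom_{\rep(Q\setminus\{x\})}(N),
\]
and to check that this isomorphism, coming from the unit of the adjunction, is in fact a ring isomorphism (functoriality of the adjunction in the first variable gives this for free). Since $N$ is indecomposable in $\rep(Q\setminus\{x\})$, its endomorphism ring is local; transporting this through the isomorphism shows $\Endhom_{\rep Q}(j_{\ast}N)$ is local, whence $j_{\ast}N$ is indecomposable in $\rep Q$.

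The only potentially delicate point is to be sure the adjunction isomorphism respects composition, but this is a formal consequence of how adjoint functors act on morphism spaces, so no further work is required. No combinatorial case analysis on the shape of $Q$ or on $N$ is needed; the argument is uniform.
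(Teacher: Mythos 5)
Your argument is correct: since $j^{-1}j_{\ast}=\mathrm{id}$ on $\rep (Q\setminus \{x\})$ (immediate from the explicit formulas, as you note), the adjunction makes $j_{\ast}$ fully faithful, so $g\mapsto j^{-1}g$ is a ring isomorphism $\Endhom(j_{\ast}N)\simeq \Endhom(N)$ and locality of the endomorphism ring, i.e.\ indecomposability, is preserved. The paper states this lemma without proof (it is quoted from Ladkani), but your route is the standard one and is in effect already contained in the preceding lemma, which identifies $\rep (Q\setminus \{x\})$ via $j_{\ast}$ with the full subcategory $S(x)^{\perp}$ of $\rep Q$.
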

\begin{prop} Let $T$ be a tilting module in $\rep Q$. Then $j^{-1}T$ is a tilting module 
in $\rep (Q\setminus \{x\})$.\\\\
\ \ For $M=\oplus_{i=1}^{m} N_{i}^{r_{i}}$ (where $N_{i}\in \ind Q,r_{i}>0)$,
 let basic$(M)=\oplus_{i=1}^{m} N_{i}$.
\end{prop}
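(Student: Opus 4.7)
My plan is to verify the two hereditary tilting conditions for $j^{-1}T$ over $k(Q\setminus\{x\})$: namely (a) $\Ext^1_{Q\setminus\{x\}}(j^{-1}T, j^{-1}T) = 0$, and (b) $j^{-1}T$ has exactly $n-1$ pairwise non-isomorphic indecomposable summands, where $n = \#Q_0$. For (a), the derived adjunction of the previous lemma, combined with exactness of $j_*$, yields
\[\Ext^1_{Q\setminus\{x\}}(j^{-1}T, j^{-1}T) \simeq \Ext^1_Q(T, j_*j^{-1}T).\]
The unit $\eta: T \to j_*j^{-1}T$ is the identity at every vertex $b \neq x$, while at $x$ it is the canonical map $T_x \to \bigoplus_{x \to y} T_y$ assembled from the structure maps of $T$ starting at $x$. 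Consequently $\Ker\eta$ and $\operatorname{coker}\eta$ are supported only at $x$ and lie in $\add S(x)$. Because $x$ is a source, no non-trivial path into $x$ exists, so $S(x)$ is injective and $\Ext^1_Q(-, S(x)) = 0$. Factoring $\eta$ as $T \twoheadrightarrow \operatorname{Im}\eta \hookrightarrow j_*j^{-1}T$ and applying $\Hom_Q(T, -)$ to the two resulting short exact sequences gives, first, $\Ext^1_Q(T, \operatorname{Im}\eta) = 0$ (using $\Ext^1_Q(T, T) = 0$ and hereditarity), and subsequently $\Ext^1_Q(T, j_*j^{-1}T) = 0$ (using $\Ext^1_Q(T, \operatorname{coker}\eta) = 0$).

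For (b), the rigidity of $j^{-1}T$ established in (a), together with the standard bound for rigid modules over a hereditary algebra of rank $n-1$, gives at most $n-1$ non-isomorphic indecomposable summands. For the matching lower bound I pass to Grothendieck groups: the classes $\{[M_i]\}_{i=1}^n$ of the pairwise non-isomorphic indecomposable summands of the tilting module $T$ form a $\Q$-basis of $K_0(kQ)_\Q \simeq \Q^n$, and $j^{-1}$ acts on $K_0$ as the coordinate projection $\Q^n \twoheadrightarrow \Q^{n-1}$ forgetting the $x$-entry. Thus $\{[j^{-1}M_i]\}_i$ spans $\Q^{n-1}$, and since every indecomposable summand of each $j^{-1}M_i$ is one of the distinct summands $N_j$ of $j^{-1}T$, the classes $\{[N_j]\}$ also span, forcing at least $n-1$ of them.

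The main hurdle is (a): identifying the unit $\eta$ explicitly, recognising $\Ker\eta$ and $\operatorname{coker}\eta$ as semisimple at $x$, and exploiting both the rigidity $\Ext^1(T, T) = 0$ and the injectivity of $S(x)$ (the latter being precisely the source condition on $x$, and the same feature that makes the formula for $j_*$ at $x$ unambiguous). Once rigidity of $j^{-1}T$ is in hand, (b) reduces to linear-algebraic bookkeeping on $K_0$ via the standard fact that tilting modules over a hereditary algebra have linearly independent dimension vectors for their indecomposable summands.
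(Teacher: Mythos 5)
Your argument is correct. Note that the paper itself gives no proof of this proposition: Section 2 is a review of Ladkani's results and this statement is simply quoted from \cite{L}, so there is no in-text argument to compare against; your proof is essentially the one Ladkani's setup is designed to produce. Part (a) is sound: the derived adjunction together with exactness of $j_{\ast}$ reduces rigidity of $j^{-1}T$ to $\Ext^{1}_{Q}(T,j_{\ast}j^{-1}T)=0$, the unit $\eta$ has kernel and cokernel in $\add S(x)$, and $S(x)$ is indeed injective for a source $x$ under the paper's convention for $\rep Q$ (as one can confirm from the description of the indecomposables in the type $A$ subsection), so the two short exact sequences kill the obstruction using $\Ext^{1}(T,T)=0$ and hereditarity. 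Part (b) is also fine: the upper bound is the standard rigidity bound, the lower bound via the $K_{0}$ projection correctly handles the degenerate case $j^{-1}S(x)=0$ when $S(x)\mid T$, and counting non-isomorphic summands (rather than insisting on basicness) is exactly what the hereditary tilting criterion requires --- which is why the paper introduces $\mathrm{basic}(-)$ immediately afterwards.
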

\begin{cor} The  map $\pi _{x}:T\mapsto \mathrm{basic}(j^{-1}T)$
 is an $order$-$preserving\ function$ 
\[(Tilt(Q),\leq)\rightarrow (Tilt(Q\setminus \{x\}),\leq ).\]
\end{cor}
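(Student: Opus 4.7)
The plan is to verify two things: that $\pi_x(T)$ is a well-defined element of $Tilt(Q\setminus\{x\})$, and that $\pi_x$ is monotone. Well-definedness is immediate from the preceding proposition, since $j^{-1}T$ is already a tilting module over $k(Q\setminus\{x\})$ and passing to the basic part merely identifies isomorphic indecomposable summands, leaving the perpendicular category unchanged.

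For monotonicity, suppose $T \leq T'$ in $Tilt(Q)$, i.e.\ $\Ext^{>0}_{kQ}(T',T)=0$; the goal is $\Ext^{>0}_{k(Q\setminus\{x\})}(j^{-1}T',j^{-1}T)=0$. Since $j_\ast$ is exact it agrees with its total derived functor and hence commutes with shifts in $\mathcal{D}^b$. Combined with the derived adjunction of the second lemma of this section, this gives
\[
\Ext^{i}_{k(Q\setminus\{x\})}(j^{-1}T',\,j^{-1}T)\;\cong\;\Ext^{i}_{kQ}(T',\,j_\ast j^{-1}T),
\]
so the problem reduces to showing the right-hand side vanishes for $i\geq 1$.

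To translate this back into a statement about $T$ itself, I would inspect the unit morphism $\eta_T\colon T\to j_\ast j^{-1}T$ of the adjunction. From the explicit formulas defining $j^{-1}$ and $j_\ast$, $\eta_T$ is the identity at every vertex $a\neq x$, while at $x$ it is the map $T_x\to\bigoplus_{x\to y}T_y$ whose components are the structure maps $T_{x\to y}$. Therefore $\Ker\eta_T$ and $\operatorname{Coker}\eta_T$ are concentrated at the vertex $x$, and so are direct sums of copies of $S(x)$. Because $x$ is a source, $S(x)=P(x)$ is projective, and these kernels and cokernels are $\Ext^{\geq 1}(T',-)$-acyclic.

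The final step is to split the $4$-term sequence $0\to\Ker\eta_T\to T\to j_\ast j^{-1}T\to\operatorname{Coker}\eta_T\to 0$ at $\operatorname{Im}\eta_T$ into two short exact sequences and apply $\Hom(T',-)$ to each: projectivity of $\Ker\eta_T$ and $\operatorname{Coker}\eta_T$ combined with $\Ext^{\geq 1}(T',T)=0$ yields the required vanishing in two routine long-exact-sequence steps. The main obstacle is the explicit identification of $\eta_T$ and of its kernel and cokernel; once they are recognized as direct sums of the projective $S(x)$, the remaining Ext-chase is mechanical.
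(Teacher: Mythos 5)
The paper states this corollary without proof (Section~2 is a review of results quoted from \cite{L}), so there is no argument of the author's to compare against; your reconstruction via the unit of the adjunction $j^{-1}\dashv j_{*}$ is the natural one and its overall architecture --- reduce to $\Ext^{\geq 1}_{kQ}(T',j_{*}j^{-1}T)=0$ by the derived adjunction, then chase the four-term sequence built from $\eta_T$ --- is sound. However, the justification of the crucial acyclicity step is wrong on two counts. First, with the paper's conventions the indecomposable projective $P(v)$ is supported on the vertices reachable from $v$ along arrows (this is forced by, e.g., the identification of $M(a',n-1)$ as projective in the proof of Lemma~\ref{lem9}, and by $\tau L(a,n)=0$ in type $A$); hence for a \emph{source} $x$ the simple $S(x)$ is the indecomposable \emph{injective} $I(x)$, not the projective $P(x)$ --- it is at a \emph{sink} that $S(x)=P(x)$. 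Second, and more seriously, even if $S(x)$ were projective this would not give what you need: projectivity of the second argument does not kill $\Ext$. For $Q=1\rightarrow 2$ one has $\Ext^{1}(S(1),P(2))=\Ext^{1}(S(1),S(2))\neq 0$, so ``$\Ker\eta_T$ and $\operatorname{Coker}\eta_T$ are $\Ext^{\geq 1}(T',-)$-acyclic because they are projective'' is not a valid inference.

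The proof is easily repaired, because the property that actually holds is the one you actually need: since $x$ is a source, $S(x)=I(x)$ is injective, so $\Ext^{\geq 1}(T',\operatorname{Coker}\eta_T)=0$ and $\Ext^{\geq 1}(T',\Ker\eta_T)=0$ (for the kernel one may also just invoke heredity to get $\Ext^{2}(T',\Ker\eta_T)=0$, which is all the first short exact sequence requires). With ``projective'' replaced by ``injective'' throughout that step, your two long-exact-sequence computations go through verbatim: $\Ext^{1}(T',T)=0$ and $\Ext^{2}(T',\Ker\eta_T)=0$ give $\Ext^{1}(T',\operatorname{Im}\eta_T)=0$, and then $\Ext^{1}(T',\operatorname{Coker}\eta_T)=0$ gives $\Ext^{1}(T',j_{*}j^{-1}T)=0$, as required.
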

\begin{prop} Let $T\in Tilt(Q\setminus \{x\}).$ Then $S(x)\oplus j_{\ast }T\in Tilt(Q)$.
\end{prop}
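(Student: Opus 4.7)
The plan is to verify the two equivalent conditions for $M := S(x) \oplus j_*T$ to be a tilting module over the hereditary algebra $kQ$: (a) $\Ext^1(M,M) = 0$, and (b) $M$ is basic with exactly $\#Q_0$ non-isomorphic indecomposable summands.

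For (b), since $T$ is basic with $\#Q_0 - 1$ indecomposable summands and the preceding lemma guarantees that $j_*$ sends indecomposables to indecomposables, $j_*T$ is basic with $\#Q_0 - 1$ indecomposable summands. Moreover, $S(x)$ cannot be isomorphic to a summand of $j_*T$, because $j_*T \in S(x)^\perp$ while $\Hom(S(x),S(x)) \neq 0$. Hence $M$ is basic with exactly $\#Q_0$ summands.

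For (a), I would split $\Ext^1(M,M)$ into the four natural pieces. The term $\Ext^1(S(x),S(x))$ vanishes because $Q$ has no loop at $x$. The term $\Ext^1(S(x), j_*T)$ vanishes directly from $j_*T \in S(x)^\perp$. The term $\Ext^1(j_*T, S(x))$ vanishes because $x$ is a source, so there are no non-trivial paths into $x$ and consequently the indecomposable injective at $x$ equals $S(x)$; thus $S(x)$ is injective in $\rep Q$, and $\Ext^1(-, S(x)) = 0$. For the remaining piece $\Ext^1(j_*T, j_*T)$, I would apply the derived adjunction established earlier with $M := j_*T$ and $N := T[1]$ to obtain
\[\Hom_{\mathcal{D}^{b}(Q)}(j_*T, j_*T[1]) \simeq \Hom_{\mathcal{D}^{b}(Q\setminus\{x\})}(j^{-1}j_*T, T[1]),\]
and then invoke $j^{-1}j_*T \simeq T$, which follows from the identification of $\rep(Q\setminus\{x\})$ with $S(x)^\perp$, to rewrite the right-hand side as $\Ext^1_{k(Q\setminus\{x\})}(T,T)$, which is zero since $T$ is tilting.

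The main (and essentially only non-bookkeeping) step is this last Ext vanishing, which is handled cleanly by the derived adjunction already supplied in the previous lemma; everything else reduces immediately to the source hypothesis on $x$ and the identifications established in Section 2.
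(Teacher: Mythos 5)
Your proof is correct. The paper itself states this proposition without proof, as part of its survey of Ladkani's results, so there is no in-paper argument to compare against; your verification of the two hereditary tilting axioms --- counting summands via the fact that $j_{\ast}$ preserves indecomposables and that $j^{-1}j_{\ast}\cong \mathrm{id}$, and splitting $\Ext^{1}(M,M)$ into four pieces handled by the absence of loops, the membership $j_{\ast}T\in S(x)^{\perp}$, the injectivity of $S(x)$ at a source, and the derived adjunction --- is exactly the standard argument from \cite{L}. All four Ext vanishings check out, so nothing is missing.
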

\begin{cor} The map $\iota_{x} :T\mapsto S(x)\oplus j_{\ast }T$
 is an $order$-$preserving\ function$
\[(Tilt(Q\setminus \{x\}),\leq )\rightarrow (Tilt(Q),\leq ).\]
\end{cor}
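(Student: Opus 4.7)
The plan is as follows. By the preceding Proposition, both $\iota_x(T)$ and $\iota_x(T')$ already lie in $Tilt(Q)$, so the only content of the corollary is that $T\leq T'$ in $Tilt(Q\setminus\{x\})$ implies $\iota_x(T)\leq \iota_x(T')$ in $Tilt(Q)$. Since $kQ$ and $k(Q\setminus\{x\})$ are hereditary, the order relation is equivalent to vanishing of $\Ext^{1}$ in the reverse argument, so the task reduces to showing
\[\Ext^{1}_{Q}\!\bigl(S(x)\oplus j_{\ast}T',\; S(x)\oplus j_{\ast}T\bigr)=0,\]
under the hypothesis $\Ext^{1}_{Q\setminus\{x\}}(T',T)=0$.

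Expanding along the direct-sum decomposition produces four summands, three of which vanish for formal reasons. First, $\Ext^{1}_{Q}(S(x),S(x))=0$ since $Q$ has no loops. Second, $\Ext^{1}_{Q}(S(x), j_{\ast}T)=0$, because an earlier lemma identifies the essential image of $j_{\ast}$ with the perpendicular subcategory $S(x)^{\perp}$. Third, $\Ext^{1}_{Q}(j_{\ast}T', S(x))=0$: since $x$ is a source, the only path ending at $x$ is the trivial one, so $S(x)$ coincides with the indecomposable injective $I(x)$ of $\rep Q$, and an injective kills $\Ext^{1}$ from the right.

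The remaining summand $\Ext^{1}_{Q}(j_{\ast}T', j_{\ast}T)$ is the crux of the argument. For this I would invoke the derived adjunction $\Hom_{\mathcal{D}^{b}(Q\setminus\{x\})}(j^{-1}M, N)\cong \Hom_{\mathcal{D}^{b}(Q)}(M, j_{\ast}N)$ stated in the preceding lemma, together with the direct verification $j^{-1}j_{\ast}=\Id$ on $\rep(Q\setminus\{x\})$, which is immediate from the explicit formulas for the two functors. Setting $M=j_{\ast}T'$ and $N=T[1]$ yields
\[\Ext^{1}_{Q}(j_{\ast}T',\, j_{\ast}T)\;\cong\; \Ext^{1}_{Q\setminus\{x\}}(j^{-1}j_{\ast}T',\, T)\;=\;\Ext^{1}_{Q\setminus\{x\}}(T',T)\;=\;0.\]
Assembling the four vanishings gives $\iota_x(T)\in \iota_x(T')^{\perp}$ and hence $\iota_x(T)\leq \iota_x(T')$.

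The only genuine obstacle is the last step: once one recognises that the adjunction transfers the vanishing of $\Ext^{1}$ across $j_{\ast}$, the remaining verifications are routine bookkeeping. I note that the hypothesis that $x$ is a source enters twice in an essential way: it makes $S(x)$ injective in $\rep Q$ (handling the third of the four summands) and it guarantees that $j_{\ast}$ takes values in the perpendicular subcategory $S(x)^{\perp}$ (handling the second). Both uses are where the asymmetry between $\iota_x$ and $\pi_x$ ultimately resides.
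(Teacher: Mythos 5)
The paper states this corollary without proof---Section 2 is only a review of results from \cite{L}---so there is no in-paper argument to compare against; judged on its own, your proof is correct and complete, and is essentially the argument one finds in Ladkani's paper. The reduction to $\Ext^{1}(\iota_x T',\iota_x T)=0$ is the right reading of the order (hereditary, so only $\Ext^1$ matters), and the four-term decomposition works exactly as you say: $\Ext^1(S(x),S(x))=0$; $\Ext^1(S(x),j_{\ast}T)=0$ because the image of $j_{\ast}$ is the perpendicular category $S(x)^{\perp}$; $\Ext^1(j_{\ast}T',S(x))=0$ because $x$ a source forces $S(x)=I(x)$ to be injective; and the essential fourth term is transported by the derived adjunction together with $j^{-1}j_{\ast}=\Id$ to $\Ext^1_{Q\setminus\{x\}}(T',T)=0$. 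Two minor remarks. First, the paper's closing Remark of Section 2 notes that \cite{L} uses the opposite order convention; this is harmless for your statement, since a map preserving $\leq$ preserves the opposite order as well. Second, your third vanishing could equally be deduced from AR duality, but the injectivity argument is cleaner and correctly isolates where the hypothesis that $x$ is a source enters.
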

\begin{prop}
 \label{Ladkani prop}
  We have 
\[\pi_{x}\iota_{x}(T)=T,\]
for all $T\in Tilt(Q\setminus \{x\}).$ In addition, 
\[T\geq \iota_{x}\pi_{x}(T),\]
for all $T\in Tilt(Q)$, with equality if and only if $T\in Tilt(Q)^{x}$.

In particular, $\pi_{x}$ and $\iota_{x}$ induce
 an isomorphism of posets between $Tilt(Q)^{x}$ and $Tilt(Q\setminus \{x\})$.
\end{prop}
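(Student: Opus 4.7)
The plan is to prove the two identities and the equality criterion in turn, and then read off the poset isomorphism as a formal consequence.

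For $\pi_{x}\iota_{x}(T)=T$, I would compute directly. Since $j^{-1}$ is additive and exact, $j^{-1}(S(x)\oplus j_{\ast}T)\cong j^{-1}S(x)\oplus j^{-1}j_{\ast}T$. The first summand is zero because $S(x)$ is supported only at $x$, and the second is isomorphic to $T$ by Lemma 2.4, which says that $j_{\ast}$ and $j^{-1}$ are mutually inverse equivalences between $\rep(Q\setminus\{x\})$ and $S(x)^{\perp}$. Since $T\in Tilt(Q\setminus\{x\})$ is already basic, applying $\mathrm{basic}(\cdot)$ returns $T$.

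For $T\geq \iota_{x}\pi_{x}(T)$, unfolding the order definition, I need to show $S(x)\oplus j_{\ast}\widetilde{T}\in T^{\perp}$, where $\widetilde{T}:=\pi_{x}(T)$; since $kQ$ is hereditary this splits into two $\Ext^{1}$-vanishings. The first, $\Ext^{1}(T,S(x))=0$, is where the assumption that $x$ is a source enters: no arrow ends at $x$, so the injective envelope of $S(x)$ coincides with $S(x)$, i.e., $S(x)$ is simple injective. For the second, I would use the derived adjunction of Lemma 2.3 to rewrite
\[\Ext^{1}(T,j_{\ast}\widetilde{T})\cong \Hom_{\mathcal{D}^{b}(Q)}(T,j_{\ast}\widetilde{T}[1])\cong \Hom_{\mathcal{D}^{b}(Q\setminus\{x\})}(j^{-1}T,\widetilde{T}[1])\cong \Ext^{1}(j^{-1}T,\widetilde{T}),\]
which vanishes because $j^{-1}T\in \add\widetilde{T}$ and $\widetilde{T}$ is a tilting module of $Q\setminus\{x\}$.

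The most delicate point is the equality criterion. Assuming $T\in Tilt(Q)^{x}$, I would write $T=S(x)\oplus T^{\prime}$ with $T^{\prime}$ having no summand isomorphic to $S(x)$, and verify $T^{\prime}\in S(x)^{\perp}$. The vanishing $\Ext^{1}(S(x),T^{\prime})=0$ is immediate from $T$ being tilting; the vanishing $\Hom(S(x),T^{\prime})=0$ uses injectivity of $S(x)$ once more, since any nonzero map $S(x)\hookrightarrow T^{\prime}$ from an injective simple would split off $S(x)$ as a summand of $T^{\prime}$, contradicting our choice. Once $T^{\prime}\in S(x)^{\perp}$, Lemma 2.4 gives $j_{\ast}j^{-1}T^{\prime}\cong T^{\prime}$ and $j^{-1}T^{\prime}$ is basic (equivalences preserve the property of being multiplicity-free), so $\iota_{x}\pi_{x}(T)=S(x)\oplus T^{\prime}=T$. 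The converse direction is trivial, since $T=S(x)\oplus j_{\ast}\widetilde{T}$ manifestly contains $S(x)$ as a summand. Combining $\pi_{x}\iota_{x}=\mathrm{id}$ on $Tilt(Q\setminus\{x\})$ with $\iota_{x}\pi_{x}=\mathrm{id}$ on $Tilt(Q)^{x}$, together with the order-preservation supplied by Corollaries 2.7 and 2.9, delivers the claimed poset isomorphism. The only genuinely non-formal ingredient is the twofold use of the simple injectivity of $S(x)$; the remainder is bookkeeping with the adjunction and the perpendicular equivalence.
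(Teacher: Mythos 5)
Your proof is correct. The paper itself gives no proof of this proposition --- Section 2 is an exposition of results quoted from Ladkani \cite{L} --- so there is no in-paper argument to compare against; your reconstruction via the adjunction $\Hom(j^{-1}M,N)\simeq \Hom(M,j_{\ast}N)$, the injectivity of $S(x)$ at a source, and the identification of $\rep (Q\setminus \{x\})$ with $S(x)^{\perp}$ is the standard one and attends to the points that actually need care: you verify both the $\Hom$- and the $\Ext^{1}$-vanishing needed for $T^{\prime}\in S(x)^{\perp}$ (the paper's $S(x)^{\perp}$ requires $i\geq 0$), and you prove the inequality in the form $\Ext^{1}(T,\iota_{x}\pi_{x}(T))=0$, which is the right direction for this paper's order convention (opposite to Ladkani's, as Remark 2.14 notes).
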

\begin{cor}
 \label{Ladkani cor}
  Let $X=Tilt(Q)\setminus Tilt(Q)^{x}$ and $Y=Tilt(Q)^{x}$. Define $f:X\rightarrow Y$
 by $f=\iota_{x} \pi_{x}$. Then 
\[Tilt(Q)\simeq (X\sqcup Y,\leq_{-}^{f}).\]
\end{cor}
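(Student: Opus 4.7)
The underlying set of $X\sqcup Y$ equals $Tilt(Q)$, so the plan is to show that the identity map on this set is an isomorphism of posets, i.e., that for $T,T'\in Tilt(Q)$ one has $T\leq T'$ iff $T\leq_-^f T'$. I would proceed by a four-case analysis according to where $T$ and $T'$ lie. The cases $T,T'\in X$ and $T,T'\in Y$ are immediate, since $\leq_-^f$ restricts to the inherited partial orders on both pieces. For the case $T\in Y$, $T'\in X$, the condition $T\leq_-^f T'$ unfolds to $T\leq\iota_x\pi_x(T')=f(T')$; one direction follows from $\iota_x\pi_x(T')\leq T'$, furnished by Proposition \ref{Ladkani prop}, and the other from order-preservation of $\iota_x\pi_x$ together with $\iota_x\pi_x(T)=T$ (again from Proposition \ref{Ladkani prop}, since $T\in Y$).

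The main obstacle is the remaining case, $T\in X$ and $T'\in Y$: no clause of $\leq_-^f$ covers this combination, so the task is to show that $T\leq T'$ cannot occur in $Tilt(Q)$. I would argue by contradiction. From $T\leq T'$ and the fact that $S(x)$ is a summand of $T'$, one gets $\Ext^1(S(x),T)=0$. Since $x$ is a source, $S(x)$ coincides with the injective $I(x)$, so any nonzero map $S(x)\to T$ is a monomorphism that splits, which would make $S(x)$ a summand of $T$ and contradict $T\in X$; hence $\Hom(S(x),T)=0$ as well, so $T$ lies in the perpendicular category $S(x)^\perp$ of the preceding lemma.

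Using the equivalence $S(x)^\perp \simeq \rep(Q\setminus\{x\})$ given by $j^{-1}$ and $j_*$, I can then write $T=j_*N$ for some $N\in \rep(Q\setminus\{x\})$. This equivalence is exact and preserves both indecomposability and non-isomorphism, so $N$ has exactly $n$ pairwise non-isomorphic indecomposable summands and satisfies $\Ext^1(N,N)=0$. But $Q\setminus\{x\}$ has only $n-1$ vertices, and any module over the hereditary algebra $k(Q\setminus\{x\})$ with vanishing self-extension has at most $n-1$ non-isomorphic indecomposable summands, by the classical partial tilting bound. This contradiction rules out the fourth case and completes the verification.
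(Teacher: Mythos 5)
Your proposal is correct. Note that the paper itself offers no proof of this corollary --- Section 2 is a review of results from \cite{L} stated without proofs --- so there is nothing internal to compare against; your argument is essentially the standard one underlying Ladkani's poset decomposition. All four cases are handled properly: the two diagonal cases are trivial, the case $T\in Y$, $T'\in X$ follows formally from Proposition \ref{Ladkani prop} together with order-preservation of $\iota_{x}\pi_{x}$, and the only substantive point --- that $T\leq T'$ is impossible for $T\in X$, $T'\in Y$ --- is correctly disposed of by observing that $T\in T'^{\perp}$ and $S(x)\nmid T$ force $T$ into $S(x)^{\perp}\simeq \rep (Q\setminus \{x\})$ (using that $S(x)$ is injective since $x$ is a source), where a rigid module cannot have $n$ pairwise non-isomorphic indecomposable summands.
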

 Now let $Q^{'}=\sigma_{x}Q$.
 Then $x$ is a sink of $Q^{'}$ and, by arguing in the similar way, we obtain the dual results by replacing 
\[(j^{-1},j_{\ast }, \pi_{x},\iota_{x},X,Y,f, \leq_{-}^{f})\]
 with 
\[(i^{-1},i_{!},\pi_{x}^{'},\iota_{x}^{'},X^{'},Y^{'},f^{'},\leq_{+}^{f^{'}}).\]  
 In particular we get
\[Tilt(Q^{'})^{x}\simeq Tilt(Q\setminus \{ x\}),\]
and
\[Tilt(Q^{'})\simeq (X^{'}\sqcup Y^{'},\leq_{+}^{f^{'}}),\]
where $X^{'}=Tilt(Q^{'})\setminus Tilt(Q^{'})^{x}$ and $Y^{'}=Tilt(Q^{'})^{x}$. 
\begin{thm}
 \label{Ladkani thm} 
 There exists an isomorphism of posets
 \[\rho:Tilt(Q)\setminus Tilt(Q)^{x}\rightarrow Tilt(Q^{'})\setminus Tilt(Q^{'})^{x}\] 
such that the following diagram commutes.\\\\
\ \ \ \ \ \ 
\unitlength 0.1in
\begin{picture}( 46.7000, 12.5000)( 12.0000,-15.6000)
\put(16.0000,-6.0000){\makebox(0,0)[lb]{$Tilt(Q)\setminus Tilt(Q)^{x}$}}%
%
\special{pn 8}%
\special{pa 2180 670}%
\special{pa 1450 1370}%
\special{fp}%
\special{sh 1}%
\special{pa 1450 1370}%
\special{pa 1512 1338}%
\special{pa 1488 1334}%
\special{pa 1484 1310}%
\special{pa 1450 1370}%
\special{fp}%
%
\special{pn 8}%
\special{pa 2880 530}%
\special{pa 4160 530}%
\special{fp}%
\special{sh 1}%
\special{pa 4160 530}%
\special{pa 4094 510}%
\special{pa 4108 530}%
\special{pa 4094 550}%
\special{pa 4160 530}%
\special{fp}%
\put(42.3000,-6.1000){\makebox(0,0)[lb]{$Tilt(Q^{'})\setminus Tilt(Q^{'})^{x}$}}%
\put(34.7000,-4.8000){\makebox(0,0)[lb]{$\rho_{x}$}}%
%
\special{pn 8}%
\special{pa 2620 660}%
\special{pa 3360 1400}%
\special{fp}%
\special{sh 1}%
\special{pa 3360 1400}%
\special{pa 3328 1340}%
\special{pa 3322 1362}%
\special{pa 3300 1368}%
\special{pa 3360 1400}%
\special{fp}%
\put(16.5000,-9.8000){\makebox(0,0)[lb]{$f$}}%
\put(27.6000,-9.9000){\makebox(0,0)[lt]{$\pi_{x}$}}%
\put(12.0000,-14.1000){\makebox(0,0)[lt]{$Tilt(Q)^{x}$}}%
\put(31.9000,-14.2000){\makebox(0,0)[lt]{$Tilt(Q\setminus \{x\})$}}%
%
\special{pn 8}%
\special{pa 3030 1500}%
\special{pa 1880 1500}%
\special{fp}%
\special{sh 1}%
\special{pa 1880 1500}%
\special{pa 1948 1520}%
\special{pa 1934 1500}%
\special{pa 1948 1480}%
\special{pa 1880 1500}%
\special{fp}%
\put(23.7000,-14.3000){\makebox(0,0)[lb]{$\iota_{x}$}}%
\put(23.5000,-15.5000){\makebox(0,0)[lt]{$\sim $}}%
\put(34.3000,-5.9000){\makebox(0,0)[lt]{$\sim $}}%
%
\special{pn 8}%
\special{pa 4610 650}%
\special{pa 3900 1390}%
\special{fp}%
\special{sh 1}%
\special{pa 3900 1390}%
\special{pa 3962 1356}%
\special{pa 3938 1352}%
\special{pa 3932 1328}%
\special{pa 3900 1390}%
\special{fp}%
%
\special{pn 8}%
\special{pa 5120 660}%
\special{pa 5870 1400}%
\special{fp}%
\special{sh 1}%
\special{pa 5870 1400}%
\special{pa 5838 1340}%
\special{pa 5832 1364}%
\special{pa 5808 1368}%
\special{pa 5870 1400}%
\special{fp}%
%
\special{pn 8}%
\special{pa 4230 1500}%
\special{pa 5530 1500}%
\special{fp}%
\special{sh 1}%
\special{pa 5530 1500}%
\special{pa 5464 1480}%
\special{pa 5478 1500}%
\special{pa 5464 1520}%
\special{pa 5530 1500}%
\special{fp}%
\put(56.3000,-15.9000){\makebox(0,0)[lb]{$Tilt(Q^{'})^{x}$}}%
\put(47.3000,-15.6000){\makebox(0,0)[lt]{$\sim $}}%
\put(47.4000,-14.2000){\makebox(0,0)[lb]{$\iota_{x}^{'}$}}%
\put(43.1000,-9.8000){\makebox(0,0)[lt]{$\pi_{x}^{'}$}}%
\put(54.5000,-8.1000){\makebox(0,0)[lt]{$f^{'}$}}%
\end{picture}%

\end{thm}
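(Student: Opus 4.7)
The plan is to construct $\rho$ via the Bernstein--Gelfand--Ponomarev reflection functor $S_x^+: \rep Q \to \rep Q'$ associated to the source $x$. Recall the classical fact that $S_x^+$ kills $S(x)$ but restricts to an exact equivalence between the full subcategory of $\rep Q$ consisting of representations with no $S(x)$-summand and the analogous subcategory of $\rep Q'$; its quasi-inverse is the dual reflection $S_x^-: \rep Q' \to \rep Q$ (with respect to $x$ viewed as a sink of $Q'$). In particular, $S_x^\pm$ induce a bijection between the indecomposable objects of these subcategories.

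First, for $T \in X$ I would verify that $S_x^+ T \in \rep Q'$ is again basic tilting and lies in $X'$. The tilting property is immediate from the fact that $S_x^+$ preserves $\Ext^1$ on the relevant subcategory, combined with the hereditary criterion (a), (b) recalled in the introduction. Basicness of $S_x^+ T$ and the non-containment of $S(x)$ (of $Q'$) as a summand follow from the bijection on indecomposables not isomorphic to $S(x)$. Thus setting $\rho_x(T) := S_x^+ T$ and taking $S_x^-$ as inverse yields a bijection $\rho_x : X \to X'$.

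To see that $\rho_x$ is an isomorphism of posets, note that $T \le T'$ in $Tilt(Q)$ is by definition the condition $\Ext^1(T', T) = 0$, and this vanishing is both preserved and reflected by the equivalence $S_x^+$. Hence $\rho_x$ and its inverse are both order-preserving. For the commutativity of the diagram, the essential point is a canonical natural isomorphism $j^{-1} \circ S_x^+ \cong i^{-1}$ on representations without $S(x)$-summand: by inspection both $j^{-1}$ and $i^{-1}$ merely forget the datum at vertex $x$, while $S_x^+$ only modifies the vector space at $x$ and the arrows incident to $x$. Applied to $T \in X$, this identity gives $\pi_x' \circ \rho_x(T) = \pi_x(T)$, and combined with $f = \iota_x \pi_x$ and $f' = \iota_x' \pi_x'$ from Corollary \ref{Ladkani cor} and its dual, it produces commutativity of the full diagram.

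The main technical point supporting this plan is the classical statement that BGP reflection functors induce an equivalence of additive subcategories (not merely an exact functor), so that indecomposable decompositions, and hence the notion of basicness, are transported faithfully between $\rep Q$ and $\rep Q'$. Once this is taken as a black box, each step above is a routine verification from the definitions of $\pi_x$, $\iota_x$, $\pi_x'$, $\iota_x'$ and the posets $X$, $X'$.
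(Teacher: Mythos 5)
Your argument is correct and is essentially the standard proof of this statement: the paper itself gives no proof here (the theorem is quoted from Ladkani \cite{L}, whose argument likewise rests on the reflection/APR-tilt equivalence between the subcategories of representations without $S(x)$-summand), so there is nothing genuinely different to compare. Two notational slips worth fixing: at a \emph{source} $x$ the relevant BGP functor is conventionally $S_x^{-}$ (with quasi-inverse $S_x^{+}$ at the sink $x$ of $Q'$); and the commutativity identity as written, $j^{-1}\circ S_x^{+}\cong i^{-1}$, does not typecheck, since $S_x^{+}T$ lives in $\rep Q'$ while $j^{-1}$ is defined on $\rep Q$ --- it should read $i^{-1}\circ S_x^{+}\cong j^{-1}$ on representations of $Q$ without $S(x)$-summand, which is what your "both functors forget the datum at $x$" justification actually establishes and which suffices for $\pi_x'\circ\rho_x=\pi_x$ and hence for the whole diagram.
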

\begin{cor} $\# Tilt(Q)=\# Tilt(Q^{'}).$
\end{cor}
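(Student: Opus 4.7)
The plan is to decompose both sides as a disjoint union and exhibit a bijection on each piece. Concretely, for any quiver $R$ and vertex $x$, write
\[
Tilt(R) = (Tilt(R)\setminus Tilt(R)^{x}) \sqcup Tilt(R)^{x}.
\]
Applying this to $R=Q$ and $R=Q'$, it suffices to show that $\#(Tilt(Q)\setminus Tilt(Q)^{x}) = \#(Tilt(Q')\setminus Tilt(Q')^{x})$ and $\#Tilt(Q)^{x} = \#Tilt(Q')^{x}$.

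The first equality is immediate from Theorem \ref{Ladkani thm}: the map $\rho$ constructed there is a poset isomorphism (in particular a bijection) between $Tilt(Q)\setminus Tilt(Q)^{x}$ and $Tilt(Q')\setminus Tilt(Q')^{x}$, so these two sets have the same cardinality.

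For the second equality, I would invoke Proposition \ref{Ladkani prop}, which says that $\pi_{x}$ and $\iota_{x}$ restrict to mutually inverse poset isomorphisms between $Tilt(Q)^{x}$ and $Tilt(Q\setminus\{x\})$. Applying the dual statement noted in the paragraph preceding Theorem \ref{Ladkani thm}, the analogous maps $\pi_{x}'$ and $\iota_{x}'$ give a poset isomorphism $Tilt(Q')^{x} \simeq Tilt(Q\setminus\{x\})$. Composing these two isomorphisms yields
\[
Tilt(Q)^{x} \simeq Tilt(Q\setminus\{x\}) \simeq Tilt(Q')^{x},
\]
in particular $\#Tilt(Q)^{x} = \#Tilt(Q')^{x}$.

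Adding the two cardinality equalities gives $\#Tilt(Q) = \#Tilt(Q')$. There is no real obstacle here: all the technical work has been carried out in Theorem \ref{Ladkani thm} and Proposition \ref{Ladkani prop}, and the corollary is merely an accounting of vertices along the commutative diagram of Theorem \ref{Ladkani thm}. One could alternatively just cite the commutativity of that diagram directly, since the four slanted maps pair the two top sets with $Tilt(Q\setminus\{x\})$ exactly as needed.
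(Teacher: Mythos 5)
Your proof is correct and is exactly the argument the paper intends: split each $Tilt$ set into the part divisible by $S(x)$ and its complement, match the complements via $\rho$ from Theorem \ref{Ladkani thm}, and match the two $S(x)$-parts through $Tilt(Q\setminus\{x\})$ via Proposition \ref{Ladkani prop} and its dual. Nothing to add.
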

\begin{rem}In \cite{L} the partial order on $Tilt(A)$ is defined by 
\[T\geq T^{'}\Longleftrightarrow T^{\perp }\subset T^{'\perp}\ \; (\mathrm{opposite\ to\ our\ definition}).\]
\end{rem}

\section{Main results}

In this section we determine the number of arrows 
 of $\vec{\mathcal{K}}(kQ)$ in the case $Q$ is a 
 Dynkin quiver of type $A$ or $D$. Let 
\[\begin{array}{lll}\
Gen (M) & := & \{N\in \modu A\mid M^{'}\stackrel{surjection}{\rightarrow }N\ \mathrm{for\ some}\ M^{'}\in \add M\}\\
\Cogen (M) & := & \{N\in \modu A\mid N\stackrel{injection}{\rightarrow }M^{'}\ \mathrm{for\ some}\ M^{'}\in \add M\}\\
\end{array}.\]
\begin{lem}
 \label{lem1}
 $($cf.\cite[prop 1.3]{CHU}$)$ Let $A$ be hereditary, $T=M\oplus Y\in Tilt(A)$ with $Y\in \ind A$. 
 If $Y\in \Gen (M)$, then there exists a unique(up to isomorphism) indecomposable module $X$
 which is not isomorphic to $Y$ s.t.$M\oplus X\in Tilt(A)$ and there exists an exact sequence
\[0\longrightarrow X\longrightarrow E\longrightarrow Y\longrightarrow 0\]with $E\in \add M$.\\\\
 Dually, if $Y\in \Cogen (M)$ then there exists a unique(up to isomorphism) indecomposable module $X$ 
 which is not isomorphic to $Y$ s.t. $M\oplus Y\in Tilt(A)$ and there exists an exact sequence
\[0\longrightarrow Y\longrightarrow E\longrightarrow X\longrightarrow 0\]with $E\in \add M$.
\end{lem}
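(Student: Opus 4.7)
The plan is to prove the $\Gen$ statement; the $\Cogen$ statement follows by duality (applying the first to the opposite algebra $A^{\mathrm{op}}$). Since $Y \in \Gen(M)$, every right $\add M$-approximation of $Y$ is surjective. Taking a minimal such approximation $p\colon E \to Y$ and setting $X := \Ker p$ immediately produces the required short exact sequence
\[
0 \longrightarrow X \longrightarrow E \longrightarrow Y \longrightarrow 0, \qquad E \in \add M.
\]
The entire proof is organised around showing that this $X$ has the stated properties.

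The bulk of the verification is that $T' := M \oplus X$ is again a basic tilting module. Because $A$ is hereditary, this reduces to $\Ext^{1}(T',T')=0$ together with the right count of nonisomorphic indecomposable summands. The Ext-vanishing is a routine diagram chase: apply $\Hom(M,-)$, $\Hom(-,M)$, and $\Hom(-,X)$ to the short exact sequence, and use that $T = M \oplus Y$ is tilting (so $\Ext^{1}(M,Y)=\Ext^{1}(Y,M)=0$), that $E \in \add M$, and that $\Ext^{\geq 2}$ vanishes. This yields $\Ext^{1}(M,X)=0$, $\Ext^{1}(X,M)=0$, and finally $\Ext^{1}(X,X)=0$ via the sequence $\Ext^{1}(E,X) \to \Ext^{1}(X,X) \to \Ext^{2}(Y,X)=0$. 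The Grothendieck-group side of the tilting criterion is immediate from $[X]=[E]-[Y]$ in $K_{0}(A)$, so the classes of the indecomposable summands of $T'$ span $K_{0}(A)$ because those of $T$ do.

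The delicate points are the indecomposability of $X$, the non-isomorphism $X \not\cong Y$, and uniqueness. Non-isomorphism is easy: the sequence is nonsplit (otherwise $Y$ would be a summand of $E \in \add M$, contradicting that $T$ is basic), and a self-extension of $Y$ is ruled out by $\Ext^{1}(Y,Y)=0$. Indecomposability is where the \emph{minimality} of $p$ is crucial: a nontrivial Krull-Schmidt decomposition of $X$, combined with $\Ext^{1}(T',T')=0$ and the fixed count of simples, would force a direct summand of $X$ to lie in $\add M$, which would split off from $E$ and contradict minimality. The \textbf{main obstacle} is the uniqueness statement: any indecomposable $X'\not\cong Y$ with $M\oplus X'\in Tilt(A)$ must agree with the $X$ just constructed. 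This is the classical fact, going back to Happel--Unger and Riedtmann--Schofield, that an almost complete tilting module over a hereditary algebra has at most two complements; since one of them is $Y$ (and lies in $\Gen(M)$), the other is forced to be $X$. This is exactly where the reference \cite{CHU} does the heavy lifting.
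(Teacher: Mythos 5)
The paper does not actually prove this lemma: it is quoted verbatim from the literature (Coelho--Happel--Unger, \cite[Prop.~1.3]{CHU}) and used as a black box, so there is no in-paper proof to compare against. Your reconstruction is essentially the standard argument from that source and is correct in outline: surjectivity of the minimal right $\add M$-approximation $p\colon E\to Y$ because $Y\in\Gen(M)$ (the image of any right $\add M$-approximation is the trace of $M$ in $Y$), the three $\Ext^{1}$-vanishing computations, the counting/rigidity criterion for tilting over a hereditary algebra, and minimality to force $X$ indecomposable. Two small points. First, your list of ingredients for $\Ext^{1}(M,X)=0$ is not quite sufficient as stated: the exact sequence gives $\Hom(M,E)\to\Hom(M,Y)\to\Ext^{1}(M,X)\to\Ext^{1}(M,E)=0$, and what kills $\Ext^{1}(M,X)$ is the surjectivity of the first map, i.e.\ the \emph{approximation property} of $p$, not merely $\Ext^{1}(M,Y)=0$; since you chose $p$ to be an approximation this is available, but it should be named. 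Second, you do not need to invoke the full ``at most two complements'' theorem for the uniqueness as stated: if $X'$ is indecomposable with $M\oplus X'$ basic tilting and $0\to X'\to E'\to Y\to 0$ exact with $E'\in\add M$, then $\Ext^{1}(M,X')=0$ forces $E'\to Y$ to be a right $\add M$-approximation, hence $E'\cong E\oplus E''$ with the map restricting to $p$ on $E$ and to $0$ on $E''$, so $X'\cong X\oplus E''$; indecomposability of $X'$ and $X'\notin\add M$ give $E''=0$ and $X'\cong X$. This keeps the proof self-contained where your version defers the hardest step back to \cite{CHU}. Also note a typo you inherited from the paper: in the $\Cogen$ half the conclusion should read $M\oplus X\in Tilt(A)$, not $M\oplus Y\in Tilt(A)$.
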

\begin{lem}
 \label{lem2}
  Let $Q$ be a quiver without loops and cycles.  If $x$ is a sink, 
 then for all $T=M\oplus S(x)\in Tilt(Q),\ S(x)$ is in $\Cogen (M)$.
 If $x$ is a source, then for all $T=M\oplus S(x)\in Tilt(Q),\ S(x)$ is in $\Gen (M)$.
\end{lem}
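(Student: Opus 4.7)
The two halves of the lemma are dual under the standard $k$-linear duality $D=\Hom_{k}(-,k)$: applying $D$ exchanges $\rep Q$ with $\rep Q^{op}$, sinks with sources, tilting modules of $kQ$ with tilting modules of $kQ^{op}$, and $\Gen$ with $\Cogen$. So it suffices to prove the sink case, the source case following by applying the sink case to $Q^{op}$ and transporting back.

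Let $x$ be a sink of $Q$ and $T=M\oplus S(x)\in Tilt(Q)$. First I would translate the conclusion into a condition at the single vertex $x$. Since $x$ is a sink, the simple module $S(x)$ coincides with the indecomposable projective $P(x)$, and therefore $\Hom(S(x),N)\cong N_{x}$ for every $N\in\rep Q$. A nonzero morphism out of a simple module is automatically injective, so $S(x)\in\Cogen(M)$ is equivalent to the single inequality $M_{x}\neq 0$.

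The plan is then to assume $M_{x}=0$ and derive a contradiction from the faithfulness of $T$. Since $Q$ is connected with at least two vertices, there is an arrow $\beta\colon a\to x$ ending at the sink. The linear map $T_{\beta}\colon T_{a}\to T_{x}$ splits as $M_{a\to x}\oplus S(x)_{a\to x}$; under $M_{x}=0$ the first component has target $M_{x}=0$ and the second has source $S(x)_{a}=0$, so $T_{\beta}=0$. Thus $\beta$ acts as zero on $T$. On the other hand, $T$ being tilting yields an exact sequence $0\to kQ\to T_{0}\to T_{1}\to 0$ with $T_{0},T_{1}\in\add T$; the inclusion $kQ\hookrightarrow T_{0}\in\add T$ forces every element of $kQ$ that annihilates $T$ (and hence $T_{0}$) to annihilate $kQ$ itself, and so to equal $0$. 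Therefore $\beta=0$, contradicting that $\beta$ is an arrow of $Q$, which closes the sink case.

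I do not anticipate any serious obstacle; the main point is the reformulation $S(x)\in\Cogen(M)\Longleftrightarrow M_{x}\neq 0$ (dually $S(x)\in\Gen(M)\Longleftrightarrow M_{x}\neq 0$ when $x$ is a source), combined with the standard faithfulness of tilting modules. The only thing to be mildly careful about in the dual case is picking the right arrow: if $x$ is a source and $M_{x}=0$, one selects an arrow $\beta\colon x\to b$ leaving $x$, and observes that the $M$-component $M_{x\to b}\colon 0\to M_{b}$ and the $S(x)$-component $S(x)_{x\to b}\colon k\to 0$ both vanish, yielding the same faithfulness contradiction.
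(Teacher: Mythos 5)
Your proof is correct, and it reaches the key intermediate claim by a genuinely different route from the paper. Both arguments reduce the lemma to the single statement $M_{x}\neq 0$: as you observe, for $x$ a sink every linear map $k\to N_{x}$ extends to a morphism $S(x)\to N$, so $\Hom(S(x),N)\cong N_{x}$ and a nonzero such morphism is automatically injective, giving $S(x)\in \Cogen(M)\Leftrightarrow M_{x}\neq 0$ (dually for a source); the paper states exactly this equivalence. Where you diverge is in proving $M_{x}\neq 0$. The paper invokes Proposition~\ref{Ladkani prop}: since $S(x)\mid T$, one has $T\in Tilt(Q)^{x}$, hence $T=F(T^{'})\oplus S(x)$ for some $T^{'}\in Tilt(Q\setminus \{x\})$ with $F(T^{'})_{x}=\oplus_{y\rightarrow x}T^{'}_{y}$ by construction, and sincerity of tilting modules then forces $M_{x}=F(T^{'})_{x}\neq 0$. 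You instead argue directly from faithfulness: if $M_{x}=0$ then any arrow $\beta :a\rightarrow x$ acts as zero on $T=M\oplus S(x)$ (its $M$-component lands in $M_{x}=0$ and its $S(x)$-component starts at $S(x)_{a}=0$), while the embedding $kQ\hookrightarrow T_{0}\in \add T$ coming from the tilting condition forces $\operatorname{ann}(T)=0$, so $\beta =0$ in $kQ$, a contradiction. Your version is more self-contained, using only the definition of a tilting module rather than the $j_{\ast }$/$i_{!}$ machinery of Section~2, and your duality reduction of the source case to the sink case is sound. The one shared implicit hypothesis is that $Q$ is connected with at least two vertices --- you need it to produce the arrow $\beta $, the paper needs it so that the sum $\oplus_{y\rightarrow x}T^{'}_{y}$ is over a nonempty index set --- but this is the paper's standing convention, and the lemma is degenerate otherwise.
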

\begin{proof} By Proposition\ref{Ladkani prop}, \[Tilt(Q\setminus \{x\})\stackrel {1:1}{\longleftrightarrow} Tilt(Q)^{x}: T\longmapsto F(T)\oplus S(x)\]

where $F(T)\in \modu kQ$ is defined by \[F(T)_{a}=\left\{\begin{array}{llll}
T_{a}&\mathrm{if}\ a\neq x, \\
\oplus_{y\rightarrow x}T_{y}&\mathrm{if}\ a=x\ \mathrm{and}\ x\ \mathrm{is\ a}\ \text{sink}, \\
\oplus_{x\rightarrow y}T_{y}&\mathrm{if}\ a=x\ \mathrm{and}\ x\ \mathrm{is\ a}\ \text{source}. \\
\end{array} \right. \]

\[F(T)_{a\rightarrow b}=\left\{\begin{array}{llll}
T_{a\rightarrow b}& \mathrm{if}\ a,b\neq x,\\
T_{y}\stackrel{injection}{\longrightarrow}\oplus_{y^{'}\rightarrow x} T_{y^{'}}& \mathrm{if}\ a=y\ \mathrm{with}\ y\rightarrow x\ \mathrm{and\ if}\ b=x\ \mathrm{and}\ x\ \mathrm{is\ a}\ \text{sink}, \\
\oplus_{x\rightarrow y^{'}} T_{y^{'}}\stackrel{projection}{\longrightarrow} T_{y}& \mathrm{if}\ b=y\ \mathrm{with}\ x\rightarrow y\ \mathrm{and\ if}\ a=x\ \mathrm{and}\ x\ \mathrm{is\ a}\ \text{source}. \\
\end{array} \right. \]
Now if $x$ is a sink then \[S(x)\in \Cogen (M)\Longleftrightarrow M_{x}\neq 0,\]
 and if $x$ is a source then \[S(x)\in \Gen (M)\Longleftrightarrow M_{x}\neq 0.\]

So the lemma  follows from the fact that if $T\in Tilt(Q)$ then $(\dimvec T)_{a}\geq 1$, for all $a$.
\end{proof}
\begin{lem}
 \label{lem3}
  If $x$ is a sink then \[\{\alpha\in \vec{\mathcal{K}}(Q)_{1}\mid s(\alpha )\in Tilt(Q)^{x},
 t(\alpha )\in Tilt(Q)\setminus Tilt(Q)^{x}\} \stackrel {1:1}{\longleftrightarrow} Tilt(Q)^{x}.\]
If $x$ is a source then \[\{\alpha\in \vec{\mathcal{K}}(Q)_{1}\mid t(\alpha )\in Tilt(Q)^{x},
 s(\alpha )\in Tilt(Q)\setminus Tilt(Q)^{x}\} \stackrel {1:1}{\longleftrightarrow} Tilt(Q)^{x}.\]
 Where, for $T\stackrel{\alpha }{\rightarrow} T^{'}$, $s(\alpha)=T$ and $t(\alpha)=T^{'}$.
\end{lem}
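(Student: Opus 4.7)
The plan is to define, for each $T\in Tilt(Q)^x$, a canonical arrow with the required endpoint configuration, and then argue the resulting map is bijective. I treat the sink case; the source case is dual.

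Given $T = M \oplus S(x) \in Tilt(Q)^x$ with $x$ a sink, Lemma~\ref{lem2} gives $S(x)\in \Cogen(M)$. The dual part of Lemma~\ref{lem1} then produces a unique indecomposable $X \not\cong S(x)$ with $M \oplus X\in Tilt(Q)$ together with a non-split exact sequence $0 \to S(x) \to E \to X \to 0$, $E\in \add M$. This is precisely the exchange data needed to define an arrow $\alpha_T : T \to M \oplus X$ in $\vec{\mathcal{K}}(Q)$. A quick check shows $M \oplus X \notin Tilt(Q)^x$: since $T$ is basic, $S(x)\notin \add M$, and since also $X\not\cong S(x)$, the module $M\oplus X$ has no summand isomorphic to $S(x)$. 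So $T \mapsto \alpha_T$ lands in the left-hand set.

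For the converse, I take an arbitrary arrow $\alpha : T'\to T''$ with $s(\alpha)=T'\in Tilt(Q)^x$ and $t(\alpha)=T''\in Tilt(Q)\setminus Tilt(Q)^x$. Writing $T' = N\oplus X'$ and $T'' = N\oplus Y$ with $X',Y$ indecomposable as in the definition of $\vec{\mathcal{K}}(Q)$, the condition that $T'$ contains $S(x)$ while $T''$ does not forces $X' = S(x)$ (otherwise $S(x)$ would be a summand of the common factor $N$ and hence of $T''$ as well). Writing $T' = M\oplus S(x)$, we then have $N=M$, and the uniqueness clause of Lemma~\ref{lem1} identifies the exchange sequence behind $\alpha$ with the one constructed above, so $\alpha = \alpha_{T'}$. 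This simultaneously gives surjectivity and injectivity of $T \mapsto \alpha_T$.

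I do not anticipate a serious obstacle; the conceptual content is just the observation that any arrow crossing from $Tilt(Q)^x$ to its complement must exchange precisely the summand $S(x)$, after which Lemmas~\ref{lem1} and~\ref{lem2} do all the work. When $x$ is a source the argument is strictly dual: one invokes $S(x)\in \Gen(M)$ from Lemma~\ref{lem2} and the first part of Lemma~\ref{lem1} to obtain, for each $T \in Tilt(Q)^x$, the unique arrow $\alpha^T$ with $t(\alpha^T)=T$ and $s(\alpha^T)\notin Tilt(Q)^x$; this explains why the roles of $s$ and $t$ swap in the two halves of the statement.
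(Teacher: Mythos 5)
Your argument is correct and is essentially the paper's own proof: both use Lemma~\ref{lem2} to place $S(x)$ in $\Cogen(M)$ (resp.\ $\Gen(M)$) and then the uniqueness in Lemma~\ref{lem1} to attach to each $T\in Tilt(Q)^{x}$ exactly one crossing arrow, after observing that any arrow leaving $Tilt(Q)^{x}$ must exchange the summand $S(x)$. The only difference is cosmetic: the paper additionally checks that two distinct sources in $Tilt(Q)^{x}$ cannot share a target in the complement, a fact not needed for the stated bijection, so your omission of it is harmless.
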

\begin{proof} Suppose $x$ is a sink, and let $T\in Tilt(Q)^{x}$.
 Then there exists a unique $T^{'}\in Tilt(Q)\setminus Tilt(Q)^{x}$
 s.t. $T\longrightarrow T^{'}$ in $\vec{\mathcal{K}}(Q)$ (by lemma\ref{lem1},\ref{lem2}).

On the other hand, let $T^{'}\in Tilt(Q)\setminus Tilt(Q)^{x}$ and suppose that there exists $T_{1}$, $T_{2}\in Tilt(Q)^{x}$ s.t.
 $T_{1}\longrightarrow T^{'}$, $T_{2}\longrightarrow T^{'}$, for $T^{'}\in Tilt(Q)\setminus Tilt(Q)^{x}$,
 in $\vec{\mathcal{K}}(Q)$. Write $T_{i}=M\oplus S(x)\oplus Y_{i}$ with $Y_{i}\in \ind kQ (i=1,2)$
 then $Y_{i}\mid T^{'}$; $\Ext (Y_{i},Y_{j})=0\ (i,j=1,2)$. Thus $\Ext (T_{1}\oplus Y_{2},T_{1}\oplus Y_{2})=0$ and
$Y_{1}=Y_{2}$ follows.
\end{proof}
\begin{cor}
 \label{cor4}
  \[\# \vec{\mathcal{K}}(Q)_{1}= \# \vec{\mathcal{K}}(\sigma_{x}Q)_{1}.\]
In particular, if $Q$ is a Dynkin quiver then $\# \vec{\mathcal{K}}(Q)_{1}$ depends only on the underlying graph of $Q$.
\end{cor}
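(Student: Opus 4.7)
The plan is to partition the arrows of $\vec{\mathcal{K}}(Q)$ by how their endpoints sit relative to $Tilt(Q)^x$, match the pieces bijectively with the corresponding pieces of $\vec{\mathcal{K}}(Q')$ (where $Q' := \sigma_x Q$), and sum. Without loss of generality assume $x$ is a sink of $Q$, so $x$ is a source of $Q'$. The crucial preliminary observation is that in this setting every crossing arrow of $\vec{\mathcal{K}}(Q)$ has its source in $Tilt(Q)^x$ and its target outside, i.e.\ no ``backward'' crossing exists. Indeed, given $T = M \oplus S(x) \in Tilt(Q)^x$, Lemma \ref{lem2} gives $S(x) \in \Cogen(M)$, and the Cogen half of Lemma \ref{lem1} produces the unique exchange sequence $0 \to S(x) \to E \to X \to 0$, which yields the outgoing arrow $T \to M \oplus X$; since an almost complete tilting module over a hereditary algebra has at most two complements (Happel--Unger), no further exchange sequence of the reverse shape $0 \to X' \to \widetilde{M} \to S(x) \to 0$ can exist, so no arrow from outside $Tilt(Q)^x$ enters $T$. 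The analogous argument for $Q'$ (with $x$ a source, using the Gen half of Lemma \ref{lem1}) shows that all crossings of $\vec{\mathcal{K}}(Q')$ flow the opposite way. These one-sided crossings make $Tilt(Q)^x$ an up-set and $Tilt(Q')^x$ a down-set of their ambient posets, so the Hasse diagrams of the two subposets agree with the restrictions of the full Hasse diagrams.

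Write $\#\vec{\mathcal{K}}(Q)_1 = a(Q) + b(Q) + c(Q)$, where $a(Q)$ counts arrows entirely inside $Tilt(Q)^x$, $b(Q)$ counts arrows entirely inside $Tilt(Q) \setminus Tilt(Q)^x$, and $c(Q)$ counts the crossing arrows; define $a(Q'), b(Q'), c(Q')$ similarly. The three pieces match: $a(Q) = a(Q')$ because Proposition \ref{Ladkani prop} and its dual give poset isomorphisms $Tilt(Q)^x \simeq Tilt(kQ \setminus \{x\}) \simeq Tilt(Q')^x$ via $\iota_x$ and $\iota_x'$, so their Hasse diagrams carry the same number of arrows; $b(Q) = b(Q')$ because the poset isomorphism $\rho$ of Theorem \ref{Ladkani thm} transports covers to covers; and $c(Q) = c(Q')$ since Lemma \ref{lem3} gives both counts as $|Tilt(kQ \setminus \{x\})|$. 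Summing yields $\#\vec{\mathcal{K}}(Q)_1 = \#\vec{\mathcal{K}}(Q')_1$. The ``in particular'' clause follows by iteration: every Dynkin diagram is a tree, and any two acyclic orientations of a tree are related by a finite sequence of sink/source reflections $\sigma_x$.

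The main obstacle is the preliminary observation on one-sided crossings, since after it is in hand the rest is transparent bookkeeping through Section 2. Pinning down the observation requires combining the exchange calculus of Lemmas \ref{lem1} and \ref{lem2} with the classical Happel--Unger fact that an almost complete tilting module over a hereditary algebra has at most two complements; I expect all the other matchings to drop out of the machinery already assembled in Section 2 without additional effort.
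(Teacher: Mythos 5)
Your proof is correct and follows essentially the same route as the paper: the same three-part decomposition of the arrow set (arrows inside $Tilt(Q)^{x}$, arrows inside the complement, and crossing arrows), matched piece by piece via Proposition \ref{Ladkani prop}, Theorem \ref{Ladkani thm} and Lemma \ref{lem3}. The only cosmetic difference is that you justify the one-sidedness of the crossing arrows through the exchange sequences and the two-complement property, whereas the paper reads it off directly from the order structure $\leq_{-}^{f}$ in Corollary \ref{Ladkani cor}.
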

\begin{proof} By corollary\ref{Ladkani cor} and lemma\ref{lem3},
\[\begin{array}{ccl}
\# \vec{\mathcal{K}}(Q)_{1}&=&\# \vec{\mathcal{K}}(Q\setminus \{x\})_{1}+\#\vec{\mathcal{K}}(Tilt(Q)\setminus Tilt(Q)^{x})_{1}+\# Tilt(Q)^{x} \\
 &=& \# \vec{\mathcal{K}}(\sigma_{x}Q)_{1}. \\
\end{array}\]
\end{proof}

\subsection{case $A$}

In this subsection we consider the quiver,
\[Q=\stackrel{1}{\circ }\rightarrow \stackrel{2}{\circ }\rightarrow\dots \rightarrow \stackrel{n}{\circ }.\]

By Gabriel's theorem, $\ind kQ=\{L(i,j)\mid 0\leq i<j\leq n\}$ where \[L(i,j)=\left\{ \begin{array}{ll}
k & (i<a\leq j), \\
0 & \text{otherwise}, \\
\end{array} \right. L(i,j)_{a\rightarrow b}=\left\{\begin{array}{ll}
1 & (i< a,b\leq j), \\
0 & \text{otherwise}. \\
\end{array} \right.\] 
And
\[\tau L(i,j)=\left\{ \begin{array}{ll}
L(i+1,j+1) & (j<n), \\
0 & (j=n), \\
\end{array}\right. \] where $\tau$ is a Auslander-Reiten translation. 
 
\begin{defn} A pair of intervals $([i,j],[i^{'},j^{'}])$ is $compatible$ if 
\[ [i,j]\cap [i^{'},j^{'}]=\emptyset \ \mathrm{or}\  [i,j]\subset [i^{'},j^{'}]\ \mathrm{or}\ [i^{'},j^{'}]\subset [i,j].\]
\end{defn}
Applying Auslander-Reiten duality,
\[\mathrm{DExt}(M,N)\cong \mathrm{Hom}(N,\tau M)\ (\mathrm{D}=\mathrm{Hom}_{k}(-,k)),\]
  we get the following lemma.
\begin{lem}
 \label{lem6}
  We have \[\Ext (L(i,j),L(i^{'},j^{'}))=0=\Ext (L(i^{'},j^{'}),L(i,j))\]
if and only if  $([i,j], [i^{'},j^{'}])$ is compatible.
\end{lem}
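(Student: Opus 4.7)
The plan is to apply the Auslander-Reiten duality displayed above, $D\Ext(M,N)\cong \Hom(N,\tau M)$, which reduces the problem to computing Hom spaces between interval representations of the linearly oriented $A_n$ quiver.

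First I would work out the basic Hom criterion between interval modules: for $L(c,d), L(a,b)\in \ind kQ$, we have $\Hom(L(c,d),L(a,b))\neq 0$ if and only if $a\le c<b\le d$. This is a direct calculation: a morphism is given by scalars $\lambda_v$ at each vertex $v$ in the overlap of the supports, the interior arrows force all these scalars to agree, and then the boundary arrows (where exactly one of source/target interval changes from $k$ to $0$) force the common scalar to vanish unless the overlap runs from the start of the target to the end of the target.

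Next I would plug $\tau L(i,j)=L(i+1,j+1)$ (for $j<n$; otherwise $\tau L(i,j)=0$) into AR duality to obtain
\[
\Ext(L(i,j),L(i',j'))\neq 0 \iff j<n \text{ and } \Hom(L(i',j'),L(i+1,j+1))\neq 0.
\]
By the Hom criterion this becomes $j<n$ and $i+1\le i'<j+1\le j'$, i.e.\ $j<n$ and $i<i'\le j<j'$. Swapping the roles of $(i,j)$ and $(i',j')$ gives the analogous criterion for the other Ext group: $j'<n$ and $i'<i\le j'<j$. These two patterns are mutually exclusive (they force opposite strict orderings of $i,i'$), so at most one of the two Ext spaces can be nonzero at a time.

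Finally I would match this with the definition of compatibility. Without loss of generality assume $i\le i'$; the $i=i'$ case gives containment automatically and both Ext groups vanish. For $i<i'$, the intervals $[i,j]$ and $[i',j']$ are compatible iff either $j<i'$ (disjoint) or $j'\le j$ (nesting), and incompatible iff $i'\le j<j'$. Thus the above Ext-nonvanishing condition is exactly incompatibility, modulo the side clause $j<n$. The one small point requiring attention is that this boundary clause is harmless: if $j=n$ and $i<i'$, then $j'\le n=j$ forces $[i',j']\subset [i,j]$, so compatibility holds vacuously. This verification of the boundary case is the only subtle step; everything else is a direct case analysis.
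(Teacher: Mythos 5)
Your proposal is correct and takes essentially the same route as the paper, which likewise reduces the claim to the Hom criterion between interval modules combined with AR duality and the formula for $\tau$ (the paper simply declares the Hom criterion obvious and stops there, while you spell out the case analysis and the harmless boundary case $j=n$, where $L(i,n)$ is projective). One cosmetic remark: in your verbal description of the Hom computation the overlap should run from the start of the \emph{source} to the end of the target, but the inequality $a\le c<b\le d$ that you actually use is the correct one, so nothing is affected.
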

\begin{proof} It is obvious that Hom$(L(i,j),L(i^{'},j^{'}))\neq 0$ if and only if $i^{'}\leq i \leq j^{'}\leq j$.
 So the lemma follows from this fact and the $AR$-duality.
\end{proof}
\begin{lem}
 \label{lem7}
  For any $T\in Tilt(Q)$, we get \[ \delta (T)=n-1.\]
\end{lem}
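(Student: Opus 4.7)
The plan is to apply Proposition~\ref{HU prop} to reduce the identity $\delta(T) = n-1$ to the combinatorial statement that exactly one vertex $a \in Q_0$ has $(\dimvec T)_a = 1$. By the classification of indecomposables, $T = \bigoplus_k L(i_k, j_k)$ corresponds to a family of $n$ intervals $[i_k, j_k]$ in $\{0, 1, \ldots, n\}$; by Lemma~\ref{lem6} these intervals are pairwise nested or disjoint, i.e.\ form a laminar family, and $(\dimvec T)_a$ equals the number of intervals whose half-open support $(i_k, j_k]$ contains $a$.

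The first substantive step is to prove that $L(0,n)$ is a summand of every $T \in Tilt(Q)$. Taking the maximal intervals $R_1 = [\alpha_1, \beta_1], \ldots, R_s = [\alpha_s, \beta_s]$ of $T$ under inclusion, I observe they are pairwise disjoint as closed intervals inside $[0,n]$ and every summand is contained in one of them. Since tilting modules over hereditary algebras satisfy $(\dimvec T)_a \geq 1$ for all $a$, the union of the half-open supports $(\alpha_i, \beta_i]$ covers $\{1, \ldots, n\}$, yielding $\sum_i (\beta_i - \alpha_i) = n$. The disjointness bound $\sum_i (\beta_i - \alpha_i + 1) \leq n+1$ then forces $s = 1$ and $R_1 = [0, n]$. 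Writing $T = L(0,n) \oplus T'$, since $L(0,n)$ contributes $1$ to $(\dimvec T)_a$ at every vertex, the vertices with $(\dimvec T)_a = 1$ are precisely those uncovered by $T'$.

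It remains to show that the $n-1$ intervals of $T'$, each properly contained in $[0,n]$, miss exactly one vertex of $\{1, \ldots, n\}$. The main ingredient is the sub-claim: \emph{for any laminar family $\mathcal{F}$ of $m$ intervals with unique maximal element $[\alpha, \beta]$, one has $m \leq \beta - \alpha$}. I would prove this by induction on $m$, examining the maximal elements $R_1', \ldots, R_\ell'$ of $\mathcal{F} \setminus \{[\alpha,\beta]\}$: the case $\ell = 1$ leverages the proper containment $R_1' \subsetneq [\alpha,\beta]$ (so the child has length at most $\beta-\alpha-1$), while $\ell \geq 2$ leverages disjointness of the children in $[\alpha,\beta]$ (so their total length is at most $\beta-\alpha+1-\ell \leq \beta-\alpha-1$). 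Applying the sub-claim to each sub-family of $T'$ rooted at a maximal element $R_i' = [\alpha_i', \beta_i']$ with $m_i'$ intervals, the bound $n - 1 = \sum m_i' \leq \sum (\beta_i' - \alpha_i')$ together with the same proper-containment/disjointness argument in $[0,n]$ (each $R_i' \subsetneq [0,n]$) forces $\sum (\beta_i' - \alpha_i') = n-1$, whence the number of vertices uncovered by $T'$ is $n - (n-1) = 1$. The main obstacle is the inductive sub-claim; the $\ell = 1$ subcase is the delicate one, as it needs the strict containment to produce the necessary off-by-one improvement.
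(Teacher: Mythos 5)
Your proof is correct, but it takes a genuinely different route from the paper's. Both arguments start from the same reduction via Proposition~\ref{HU prop} (namely, show that exactly one vertex carries dimension $1$), and both rest on Lemma~\ref{lem6}, which turns the summands of $T$ into a laminar family of $n$ distinct intervals. From there the paper argues representation-theoretically: it quotes that the projective-injective module $L(0,n)$ is a summand of every tilting module, sets $a=\max\{i<n \mid L(0,i)\mid T\}$ (or $0$), checks that $L(a+1,n)$ is compatible with every summand of $T$ and invokes the maximality of tilting modules among rigid modules (``by the $\Ext=0$ condition'') to conclude $L(a+1,n)\mid T$, and then reads off that $a+1$ is the unique vertex with $(\dimvec T)_{a+1}=1$. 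You instead replace both of these inputs by counting: sincerity plus the fact that $T$ has $n$ summands forces the unique maximal interval to be $[0,n]$ (so $L(0,n)\mid T$ without appealing to projective-injectivity), and your inductive bound $m\le\beta-\alpha$ for a laminar family with unique root $[\alpha,\beta]$ pins the total half-open coverage of the remaining $n-1$ intervals to exactly $n-1$, leaving exactly one uncovered vertex. All the estimates check out (including the off-by-one in the $\ell=1$ case of the sub-claim, which indeed needs strict containment, available because $T$ is basic so the intervals are pairwise distinct). What the paper's argument buys is brevity and an explicit identification of the dimension-one vertex as $a+1$; what yours buys is independence from the ``maximal rigid'' property and from projective-injectivity, at the cost of the extra inductive lemma on laminar families.
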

\begin{proof} Let $T\in Tilt(Q)$ then the projective-injective module $L(0,n)$ is a direct summand of $T$.
From this fact,  we get $\delta (T)<n $. 

Denote by $X$ the set of indecomposable direct summands of $T$ not isomorphic to  $L(0,n)$ and define 
\[ a:=\left\{ \begin{array}{ll}
max\{ i\mid L(0,i)\in X\} & \text{if}\  L(0,i)\in X\ \text{for some}\ i, \\
0 & \text{otherwise.} \\
\end{array}\right.\]

Then, by lemma\ref{lem6}, we get 
\[\Ext (T,L(a+1,n))=0=\Ext (L(a+1,n),T).\]
\ \ By $\Ext =0$ condition, we can see $L(a+1,n)$ is a direct summand of $T$. In particulur   
\[(\dimvec T)_{i}=1\Longleftrightarrow i=a+1.\]
The lemma follows from this fact and proposition\ref{HU prop} .
\end{proof}
Now it is easy to check the number of arrows in $\vec{\mathcal{K}}(Q)$, because it is equal to
\[\frac{1}{2}\sum_{T\in \mathrm{Tilt(Q)}} \delta (T)\]
\begin{cor} $\#\vec{\mathcal{K}}(Q)_{1}=\frac{n-1}{2(n+1)}{2n\choose n}={2n-1\choose n-2}.$
\end{cor}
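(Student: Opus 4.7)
The proof is a short computation combining the two inputs immediately preceding the statement: the formula
\[
\#\vec{\mathcal{K}}(Q)_{1} = \frac{1}{2}\sum_{T\in Tilt(Q)}\delta(T),
\]
and Lemma \ref{lem7}, which asserts $\delta(T)=n-1$ for \emph{every} $T\in Tilt(Q)$. My plan is simply to substitute: the sum collapses to $(n-1)\cdot \#Tilt(Q)$, and then I use the Catalan count $\#\vec{\mathcal{K}}(kQ)_{0}=\frac{1}{n+1}\binom{2n}{n}$ quoted from \cite[prop.\ 3.9]{FZ} in the introduction.

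Concretely, the computation is
\[
\#\vec{\mathcal{K}}(Q)_{1}=\frac{1}{2}(n-1)\cdot\frac{1}{n+1}\binom{2n}{n}=\frac{n-1}{2(n+1)}\binom{2n}{n},
\]
which gives the first form. For the second form I would rewrite $\binom{2n}{n}=\frac{(2n)!}{n!\,n!}$ and extract the factor $\frac{n(n-1)}{n!}=\frac{1}{(n-2)!}$, producing
\[
\frac{n-1}{2(n+1)}\binom{2n}{n}=\frac{(2n-1)!}{(n-2)!\,(n+1)!}=\binom{2n-1}{n-2},
\]
after noting $(2n)!=2n\cdot(2n-1)!$ cancels the $2$ in the denominator. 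This is just bookkeeping with factorials.

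There is no real obstacle: all substantive work has already been done in Lemma \ref{lem7} (which pins $\delta$ to the constant $n-1$ by identifying $L(0,n)$ as a forced summand and locating the unique index where the dimension vector equals $1$) and in Corollary \ref{cor4} (which guarantees orientation-independence so that the stated Catalan count applies to our $Q$). The only thing to be careful about is the binomial identity at the end, but it is routine. Thus the corollary is obtained in two lines of arithmetic.
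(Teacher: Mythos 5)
Your proposal is correct and is exactly the computation the paper intends: the corollary follows immediately from $\#\vec{\mathcal{K}}(Q)_{1}=\frac{1}{2}\sum_{T}\delta(T)$, Lemma \ref{lem7} giving $\delta(T)=n-1$, and the Catalan count $\#Tilt(Q)=\frac{1}{n+1}\binom{2n}{n}$, with your factorial manipulation yielding $\binom{2n-1}{n-2}=\binom{2n-1}{n+1}$ as in the main theorem. The only nitpick is that orientation-independence of the \emph{vertex} count is the corollary to Theorem \ref{Ladkani thm} ($\#Tilt(Q)=\#Tilt(Q')$) rather than Corollary \ref{cor4}, which concerns arrows; this does not affect the argument.
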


\subsection{case $D$}

Through this subsection, we consider the quiver\\\\
\unitlength 0.1in
\begin{picture}( 27.3500,  5.0000)( 10.2000, -7.7500)
%
\special{pn 8}%
\special{ar 1856 536 42 42  0.0000000 6.2831853}%
%
\special{pn 8}%
\special{ar 2146 536 40 40  0.0000000 6.2831853}%
%
\special{pn 8}%
\special{pa 1916 536}%
\special{pa 2086 536}%
\special{fp}%
\special{sh 1}%
\special{pa 2086 536}%
\special{pa 2018 516}%
\special{pa 2032 536}%
\special{pa 2018 556}%
\special{pa 2086 536}%
\special{fp}%
%
\special{pn 8}%
\special{pa 2206 536}%
\special{pa 2376 536}%
\special{fp}%
\special{sh 1}%
\special{pa 2376 536}%
\special{pa 2308 516}%
\special{pa 2322 536}%
\special{pa 2308 556}%
\special{pa 2376 536}%
\special{fp}%
%
\special{pn 8}%
\special{pa 3186 546}%
\special{pa 3396 546}%
\special{fp}%
\special{sh 1}%
\special{pa 3396 546}%
\special{pa 3328 526}%
\special{pa 3342 546}%
\special{pa 3328 566}%
\special{pa 3396 546}%
\special{fp}%
%
\special{pn 8}%
\special{ar 3506 546 40 40  0.0000000 6.2831853}%
%
\special{pn 8}%
\special{sh 1}%
\special{ar 2526 536 10 10 0  6.28318530717959E+0000}%
\special{sh 1}%
\special{ar 2726 536 10 10 0  6.28318530717959E+0000}%
\special{sh 1}%
\special{ar 2956 536 10 10 0  6.28318530717959E+0000}%
\special{sh 1}%
\special{ar 2536 536 10 10 0  6.28318530717959E+0000}%
\special{sh 1}%
\special{ar 2736 536 10 10 0  6.28318530717959E+0000}%
\special{sh 1}%
\special{ar 2966 536 10 10 0  6.28318530717959E+0000}%
\special{sh 1}%
\special{ar 2966 536 10 10 0  6.28318530717959E+0000}%
%
\special{pn 8}%
\special{pa 3556 516}%
\special{pa 3656 406}%
\special{fp}%
\special{sh 1}%
\special{pa 3656 406}%
\special{pa 3596 442}%
\special{pa 3620 444}%
\special{pa 3626 468}%
\special{pa 3656 406}%
\special{fp}%
%
\special{pn 8}%
\special{pa 3556 586}%
\special{pa 3656 686}%
\special{fp}%
\special{sh 1}%
\special{pa 3656 686}%
\special{pa 3622 624}%
\special{pa 3618 648}%
\special{pa 3594 652}%
\special{pa 3656 686}%
\special{fp}%
%
\special{pn 8}%
\special{ar 3716 376 40 40  0.0000000 6.2831853}%
%
\special{pn 8}%
\special{ar 3716 736 40 40  0.0000000 6.2831853}%
\put(18.5500,-4.1500){\makebox(0,0){$1$}}%
\put(21.5000,-4.2000){\makebox(0,0){$2$}}%
\put(34.5000,-4.2000){\makebox(0,0){$n-1$}}%
\put(39.7000,-3.6000){\makebox(0,0){$n^{+}$}}%
\put(39.7000,-7.2000){\makebox(0,0){$n^{-}$}}%
\put(10.2000,-6.1000){\makebox(0,0)[lb]{$Q=Q_{n}=$}}%
\end{picture}%
\\\\
\ \ Then $\ind kQ=\{L(a,b)\mid 0\leq a<b\leq n-1\}\\
\ \ \ \ \ \ \ \ \ \ \ \ \ \ \ \ \ \ \ \ \cup \{L^{\pm }(a,n)\mid 0\leq a\leq n-1\}\cup \{M(a,b)\mid 0\leq a<b\leq n-1\}$ \\
\ where \[\begin{array}{lll}
L(a,b)_{i} & = & \left\{ \begin{array}{cl}
k & \mathrm{if }\  a<i\leq b, \\
0 & \mathrm{otherwise,} \\
\end{array} \right. \\\\
  L(a,b)_{i\rightarrow j} & = & \left\{ \begin{array}{cl}
1 & \mathrm{if }\ a<i<b, \\
0 & \mathrm{otherwise,} \\
\end{array} \right. \\\\ 
L(a,n)^{\pm }_{i} & = & \left\{ \begin{array}{cl}
k & \mathrm{if }\ a<i\leq n-1\ or\ i=n^{\pm }, \\
0 & \mathrm{otherwise,} \\
\end{array} \right. \\\\
 L(a,n)^{\pm }_{i\rightarrow j} & = & \left\{ \begin{array}{cl}
1 & \mathrm{if }\ a<i<n-1\ or\ i=n-1,j=n^{\pm },  \\
0 & \mathrm{otherwise,} \\
\end{array} \right. \\\\
M(a,b)_{i} & = & \left\{ \begin{array}{ll}
k & \mathrm{if }\ a<i\leq b\ \mathrm{or}\ i=n^{\pm }, \\
k^{2} & \mathrm{if }\ b<i\leq n-1, \\
0 & \mathrm{otherwise,} \\
\end{array} \right. \\\\
 M(a,b)_{i\rightarrow j} & = & \left\{ \begin{array}{cl}
1 & \mathrm{if }\ a<i<b, \\
\left( \begin{array}{l}
1 \\
1 \\
\end{array} \right) & \mathrm{if }\ i=b, \\\\
(1,0) & \mathrm{if }\ i=n-1,j=n^{+}, \\\\
(0,1) & \mathrm{if }\ i=n-1,j=n^{-},\\\\
\left(\begin{array}{cc}
1 & 0 \\
0 & 1 \\
\end{array}\right) & \mathrm{if }\ b<i<n-1, \\\\
0 & \text{otherwise.} \\
\end{array} \right. \\
\end{array} \]
Then \\
$\tau L(a,b)=\left\{ \begin{array}{ll}
L(a+1,b+1) & \mathrm{if }\ b<n-1, \\
M(0,a+1) & \mathrm{if }\ b=n-1, \\
\end{array} \right. \\
\tau L^{+}(a,n)  = L^{-}(a+1,n),\\
\tau L^{-}(a,n)  = L^{+}(a+1,n),\\
\tau M(a,b)  =  \left\{ \begin{array}{ll}
M(a+1,b+1) & \mathrm{if }\ b<n-1, \\
0 & \mathrm{if }\ b=n-1. \\
\end{array} \right.$\\
\begin{lem}
 \label{lem9} \begin{align*}
(1) & \Ext (L(a,b),L(a^{'},b^{'}))=0=\Ext (L(a^{'},b^{'}),L(a,b)) \\
 & \Longleftrightarrow  ([a,b], [a^{'},b^{'}]) :\mathrm{compatible.} \\
(2) & \Ext (L(a,b),L^{\pm }(a^{'},n))=0=\Ext (L^{\pm }(a^{'},n),L(a,b)) \\
 &  \Longleftrightarrow  ([a,b], [a^{'},n]) :\mathrm{compatible.} \\
(3) & \Ext(L(a,b),M(a^{'},b^{'}))=0=\Ext(M(a^{'},b^{'}),L(a,b)) \\
 &  \Longleftrightarrow  ([a,b], [a^{'},n]),([a,b],[b^{'},n]) :\mathrm{compatible.} \\
(4) & \Ext (M(a,b),L^{\pm }(a^{'},n))=0=\Ext (L^{\pm }(a^{'},n),M(a,b)) \\
 &  \Longleftrightarrow  a\leq a^{'} \leq b. \\
(5) & \mathrm{Ext}(L^{\pm }(a,n),L^{\pm }(a^{'},n))=0=\Ext (L^{\pm }(a^{'},n),L^{\pm }(a,n))\ \ \mathrm{for\ all\ } a,a^{'}. \\
(6) & \Ext (L^{+}(a,n),L^{-}(a^{'},n))=0=\Ext (L^{-}(a^{'},n),L^{+}(a,n))  \\
 &  \Longleftrightarrow  a=a^{'}. \\
(7) & \Ext (M(a,b),M(a^{'},b^{'}))=0=\Ext (M(a^{'},b^{'}),M(a,b)) \\
 &  \Longleftrightarrow  [a,b]\subset [a^{'},b^{'}]\ or\ [a^{'},b^{'}]\subset [a,b]. \\
\end{align*}
\end{lem}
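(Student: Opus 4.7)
The uniform strategy is the one already used for Lemma~\ref{lem6}: apply Auslander--Reiten duality
\[
D\Ext(X,Y)\cong\Hom(Y,\tau X)
\]
to each of the seven vanishing conditions and translate them into statements about $\Hom$, which can be read off from the explicit descriptions of $L(a,b)$, $L^{\pm}(a,n)$, $M(a,b)$ together with the $\tau$-formulas listed just before the lemma. The first step is therefore to tabulate the relevant Hom spaces between these indecomposables. The modules $L(a,b)$ with $b<n$ live entirely on the linear subquiver $1\to 2\to\cdots\to n-1$, so their Hom spaces are computed as in type $A$ and give the interval-inclusion condition of Lemma~\ref{lem6}. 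The modules $L^{\pm}(a,n)$ are intervals of the same shape but terminate at the branch $n^{\pm}$, and morphisms between them exist only when both modules pass through the same branch. Morphisms into or out of $M(a',b')$ must respect the diagonal map $M_{b'}\to M_{b'+1}^{\oplus 2}$ and the two projections at $n-1$, so a nonzero morphism involving an $M$-module exists only when the relevant intervals are nested.

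Once these Hom tables are in hand, parts (1)--(3) follow from the $\tau$-shift $L(a,b)\mapsto L(a+1,b+1)$ exactly as in Lemma~\ref{lem6}: the nonvanishing of $\Hom(L(a',b'),L(a+1,b+1))$ (and the analogues with $L^{\pm}$ or $M$ on one side) is equivalent to a failure of compatibility of the intervals, and the symmetric vanishing is obtained by swapping the roles of the two modules. Parts (5) and (6) are governed by the sign-swap $\tau L^{\pm}(a,n)=L^{\mp}(a+1,n)$: in (5) both Ext-vanishings reduce to Hom spaces between modules of opposite sign at $n$, which vanish automatically; in (6) they reduce to Hom between modules of the same sign, and the interval condition forces $a=a'$. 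Part (4) is analogous, using $\tau M(a,b)=M(a+1,b+1)$ together with the fact that a morphism between $L^{\pm}(a',n)$ and an $M$-module exists precisely when the ray $[a',n]$ sits inside $[a,b]$ through the appropriate branch; the two Ext conditions then collapse to $a\le a'\le b$.

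The main work is part (7). Here one must compute $\Hom(M(a',b'),M(a+1,b+1))$ and its companion after swapping, which is the only place where the double-dimensional middle part of $M$ really matters. My plan is as follows: a morphism $\varphi\colon M(a',b')\to M(a'',b'')$ must be an isomorphism on the one-dimensional ends at $n^{\pm}$, and commutation with the diagonal map and with the two projections forces the components of $\varphi$ on the doubled overlap to be scalar multiples of the identity; compatibility at the walls $a'$ and $b'$ then shows that a nonzero $\varphi$ exists if and only if $[a',b']\subset[a'',b'']$. The boundary case $b=n-1$, where $\tau M(a,b)=0$, is handled separately: one Ext then vanishes automatically and the other reduces to a nested inclusion by the same analysis. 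Combining the two Ext-vanishings yields the disjunction $[a,b]\subset[a',b']$ or $[a',b']\subset[a,b]$ asserted in the statement. This $M$-vs-$M$ Hom calculation and the boundary bookkeeping are the only real obstacle; everything else reduces to type-$A$-style interval gymnastics.
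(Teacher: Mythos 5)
Your overall strategy is the paper's: apply $D\Ext(X,Y)\cong\Hom(Y,\tau X)$, tabulate the relevant Hom spaces, and run a case analysis on the relative positions of the intervals (the paper writes this out in full only for (3), reduces (1),(2) to the type $A$ computation, and declares (4),(7) ``similar''). However, your treatment of part (7) contains a genuine error, not just a gap. The claim that a nonzero morphism $M(a',b')\to M(a'',b'')$ must be an isomorphism at $n^{\pm}$ and exists if and only if $[a',b']\subset[a'',b'']$ is false. For instance, $\Hom(M(1,2),M(0,1))\cong k^{2}\neq 0$ for $n\geq 4$: one checks directly that $\varphi_{n-1}=\mathrm{diag}(\varphi_{n^{+}},\varphi_{n^{-}})$ is forced by the two projections, and the diagonal map $\binom{1}{1}$ at the wall then imposes no further relation, so $\varphi_{n^{+}},\varphi_{n^{-}}$ are free (and need not be equal, so $\varphi$ need not be an isomorphism at either branch). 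Worse, your criterion contradicts the statement you are trying to prove: for $n\geq 5$ take the nested pair $[1,2]\subset[0,3]$; then $\Ext(M(0,3),M(1,2))\cong D\Hom(M(1,2),\tau M(0,3))=D\Hom(M(1,2),M(1,4))$, which your criterion declares nonzero because $[1,2]\subset[1,4]$, whereas (7) requires it to vanish (and it does: the two projections at $n-1$ force the two components of the map $k^{2}\to k$ to vanish separately). The correct criterion, which one can verify by the same commutative-square analysis, is $\Hom(M(c,d),M(e,f))\neq 0$ if and only if $e\leq c$ and $f\leq d$; feeding this into the duality gives $\Ext(M(a,b),M(a',b'))\neq 0$ iff $a<a'$ and $b<b'$, and the conjunction of the two vanishings is then exactly nestedness.

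Two smaller points. First, your blanket assertion that (1)--(3) follow from the shift $L(a,b)\mapsto L(a+1,b+1)$ ``exactly as in Lemma \ref{lem6}'' ignores that $\tau L(a,n-1)=M(0,a+1)$ is an $M$-module, so for $b=n-1$ the duality lands you in $L$-versus-$M$ or $M$-versus-$M$ Hom computations; the paper sidesteps this for (1),(2) by noting that extensions between modules vanishing at $n^{\pm}$ live entirely on the $A_{n-1}$ subquiver, and for (3) it treats $b=n-1$ as separate cases. Second, your description of the Hom condition in (4) (``the ray $[a',n]$ sits inside $[a,b]$'') is too vague to check and should be replaced by the explicit two computations $\Hom(L^{\pm}(a',n),M(a+1,b+1))$ and $\Hom(M(a,b),L^{\mp}(a'+1,n))$. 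Parts (1),(2),(5),(6) and your outline for (3) are fine.
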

\begin{proof} (1) and (2) follow from the case $A$ and (5),(6) are obvious.

(3):(case $b<a^{'}$) It is obvious that 
\[\Ext (L(a,b),M(a^{'},b^{'}))=0=\Ext (M(a^{'},b^{'}),L(a,b)).\]
(case $a<a^{'}\leq b<b^{'}$) In this case we claim that
\[\Hom (M(a^{'},b^{'}),\tau L(a,b))\neq 0.\]
In fact $0\neq f=(f_{i})_{i}\in \Hom (M(a^{'},b^{'}),\tau L(a,b))$ 
 where \[f_{i}=\left\{ \begin{array}{cl}
1 & \mathrm{if }\ a^{'}<i\leq b+1, \\
0 & \mathrm{otherwise}. \\
\end{array} \right.\]
(case $a<a^{'}<b^{'}\leq b<n-1$) In this case we claim that
\[\Hom (M(a^{'},b^{'}),\tau L(a,b))\neq 0.\]
In fact $0\neq f=(f_{i})_{i}\in \Hom (M(a^{'},b^{'}),\tau L(a,b))$ 
 where \[f_{i}=\left\{ \begin{array}{cl}
1 & \mathrm{if }\ a^{'}<i\leq b^{'}, \\
(0,1) & \mathrm{if }\ b^{'}<i\leq b, \\
0 & \mathrm{otherwise}. \\
\end{array} \right.\]
(case $a<a^{'}<b^{'}\leq b=n-1$) In this case we also claim that
\[\Hom (M(a^{'},b^{'}),\tau L(a,b))\neq 0.\]
In fact $0\neq f=(f_{i})_{i}\in \Hom (M(a^{'},b^{'}),\tau L(a,n-1))$ 
 where \[f_{i}=\left\{ \begin{array}{cl}
\left( \begin{array}{l}
1 \\
1 \\
\end{array} \right) & \mathrm{if }\ a^{'}<i\leq b^{'}, \\
1 & \mathrm{if }\ b^{'}<i\leq n-1\ \mathrm{or}\ i=n^{\pm }, \\
0 & \mathrm{otherwise}. \\
\end{array} \right.\]
(case $a^{'}\leq a<b<b^{'}<n-1$) In this case we claim that
\[\Hom (M(a^{'},b^{'}),\tau L(a,b))=0=\Hom (L(a,b),\tau M(a^{'},b^{'})).\]
Let $f=(f_{i})_{i}\in \Hom (M(a^{'},b^{'}),\tau L(a,b))$.
 Then $f_{i}=0$ if $i\leq a+1$ or $b+1<i\leq n-1$ or $i=n^{\pm }$ and 
\[f_{a+2}=f_{a+3}=\cdots =f_{b+1}.\]
Now the commutative square for $f_{a+1},f_{a+2}$ shows $f_{a+2}=0$.
 So \[\Hom (M(a^{'},b^{'}),\tau L(a,b))=0.\]
And similarly \[\Hom (L(a,b),\tau M(a^{'},b^{'}))=0.\]
(case $a^{'}\leq a<b<b^{'}=n-1$) It is obvious that  
\[\Hom (M(a^{'},n-1),\tau L(a,b))=0\]
and, since $M(a^{'},n-1)$ is projective, we have \[\Hom (L(a,b),\tau M(a^{'},b^{'}))=0.\]
(case $a^{'}\leq a<b^{'}\leq b<n-1$) In this case we claim that
\[\Hom (M(a^{'},b^{'}),\tau L(a,b))\neq 0.\]
In fact $0\neq f=(f_{i})_{i}\in \Hom (M(a^{'},b^{'}),\tau L(a,b))$ 
 where \[f_{i}=\left\{ \begin{array}{cl}
(1,-1) & \mathrm{if }\ b^{'}<i\leq b+1, \\
0 & \mathrm{otherwise}. \\
\end{array} \right.\]
(case $a^{'}\leq a<b^{'}\leq b=n-1$) In this case we also claim that
\[\Hom (M(a^{'},b^{'}),\tau L(a,b))\neq 0.\]
In fact $0\neq f=(f_{i})_{i}\in \Hom (M(a^{'},b^{'}),\tau L(a,n-1))$ 
 where \[f_{i}=\left\{ \begin{array}{cl}
1 & \mathrm{if }\ a^{'}<i\leq a+1\ \text{or}\ i=n^{\pm },   \\\\
\left( \begin{array}{l}
1 \\
1 \\
\end{array} \right) & \mathrm{if }\ a+1<i\leq b^{'}, \\\\
\left(\begin{array}{cc}
1 & 0 \\
0 & 1 \\
\end{array}\right) & \mathrm{if }\ b^{'}<i\leq n-1, \\\\
0 & \mathrm{otherwise}. \\
\end{array} \right.\]
(case $b^{'}\leq a$) Similar to the case $a^{'}\leq a<b<b^{'},$ we get
\[\Hom (M(a^{'},b^{'}),\tau L(a,b))=0=\Hom (L(a,b),\tau M(a^{'},b^{'})).\]
So we have proved (3).

(4),(7):The proofs are similar to (3).
\end{proof}
\begin{lem} 
 \label{510}
  Let $T\in Tilt(Q)$. 

$(1) L(0,n-1)\mid T$ implies $L^{\pm}(0,n)\mid T$.

$(2)$ If $L^{+}(0,n)\mid T\ (resp.L^{-}(0,n)\mid T)$ and  all 
 indecomposable direct summands of $T$ are insincere, then
 \[ L^{-}(0,n)\mid T\ (resp.L^{+}(0,n)\mid T).\]
\end{lem}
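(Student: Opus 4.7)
The plan is to combine Lemma \ref{lem9} with the well-known sincerity of every tilting module (already invoked in the proof of Lemma \ref{lem2}: $(\dimvec T)_a\geq 1$ for all vertices $a$) to enumerate which indecomposables can coexist as summands of $T$, and then to read off the conclusion from which summands contribute to the dimension at $n^{+}$ and $n^{-}$.

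For (1), I would assume $L(0,n-1)\mid T$ and use Lemma \ref{lem9} to constrain the remaining summands. Part (1) of that lemma gives compatibility of $L(0,n-1)$ with every $L(a',b')$; part (2) shows that $L^{\pm}(a',n)$ is compatible only if $([0,n-1],[a',n])$ is compatible, which forces $[0,n-1]\subset[a',n]$, i.e.\ $a'=0$; and part (3) forbids any $M(a',b')$ summand, since requiring both $([0,n-1],[a',n])$ and $([0,n-1],[b',n])$ to be compatible would force $a'=b'=0$, contradicting $a'<b'$. Thus every indecomposable summand of $T$ belongs to $\{L(a,b)\}\cup\{L^{+}(0,n),L^{-}(0,n)\}$. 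Since only $L^{+}(0,n)$ (resp.\ $L^{-}(0,n)$) has nonzero dimension at $n^{+}$ (resp.\ $n^{-}$) among these candidates, the sincerity of $T$ forces both $L^{\pm}(0,n)$ into $T$.

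For (2), I would assume $L^{+}(0,n)\mid T$ with every indecomposable summand of $T$ insincere. Lemma \ref{lem9}(2),(5),(6),(4) limit the possible summands to $L(a,b)$, $L^{+}(a',n)$ (any $a'$), $L^{-}(0,n)$ (only $a'=0$ by (6)), and $M(a',b')$ with $a'\leq 0\leq b'$, forcing $a'=0$. But $M(0,b')$ has strictly positive dimension at every vertex $1,\dots,n-1,n^{+},n^{-}$, so it is sincere and hence excluded by hypothesis. Among the remaining candidates, only $L^{-}(0,n)$ has a nonzero entry at $n^{-}$, so sincerity of $T$ forces $L^{-}(0,n)\mid T$. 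The mirror implication starting from $L^{-}(0,n)\mid T$ is verbatim symmetric.

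The main obstacle, such as it is, is simply the careful bookkeeping of the compatibility conditions supplied by Lemma \ref{lem9}; once the list of allowed summands is tabulated, the conclusion follows from a one-line sincerity check. The key point making part (2) bite is the coincidence that $M(0,b')$ — the unique family of $M$-modules compatible with $L^{+}(0,n)$ — is precisely the family of sincere indecomposables that could have otherwise appeared, so the insincerity hypothesis removes them in one stroke.
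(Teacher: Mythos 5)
Your proposal is correct. Part (2) is essentially the paper's own argument: locate a summand $N$ with $(\dimvec N)_{n^{-}}\neq 0$, rule out $N=M(a,b)$ because compatibility with $L^{+}(0,n)$ via Lemma \ref{lem9}(4) forces $a=0$ and $M(0,b)$ is sincere, and then conclude $N=L^{-}(0,n)$ from Lemma \ref{lem9}(6). Part (1), however, takes a genuinely different route. The paper argues homologically: from $\Ext(L(0,n-1),T)=0$ and the injection $\tau L^{\pm}(0,n)\hookrightarrow \tau L(0,n-1)$ it deduces $\Hom(T,\tau L^{\pm}(0,n))=0$ by Auslander--Reiten duality, uses injectivity of $L^{\pm}(0,n)$ for the other vanishing, and then invokes maximality of tilting modules among rigid modules to force $L^{\pm}(0,n)\mid T$. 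You instead classify, via Lemma \ref{lem9}(2),(3), which indecomposables can coexist with $L(0,n-1)$, observe that the only surviving candidates with nonzero dimension at $n^{+}$ (resp.\ $n^{-}$) are $L^{+}(0,n)$ (resp.\ $L^{-}(0,n)$), and conclude by sincerity of $T$. Your version is more elementary in that it needs neither the $\tau$-computation nor the ``add a compatible indecomposable to a tilting module'' maximality fact — only the sincerity of tilting modules, which the paper already uses in Lemma \ref{lem2} — and it makes parts (1) and (2) structurally parallel; the paper's version is shorter once the AR machinery is taken for granted and does not depend on the exhaustive case analysis in Lemma \ref{lem9}(3). Both are complete proofs.
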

\begin{proof} $(1):$Suppose $L(0,n-1)\mid T$. Then 
\[\Ext (T,L(0,n-1))=0=\Ext (L(0,n-1),T)\] and there exists an injection 
\[\tau L^{\pm }(0,n)\longrightarrow \tau L(0,n-1).\]
So we get \[\Ext (L^{\pm } (0,n),T)\simeq \Hom (T,\tau L^{\pm }(0,n))=0.\]
Since $L^{\pm }(0,n)$ is injective, we also get 
\[\Ext (T,L^{\pm }(0,n))=0.\] 
Therefore, $L^{\pm }(0,n)\mid T$.\\\\
\ \ $(2)$: Suppose $L^{+}(0,n)\mid T$ and that all 
indecomposable direct summands of $T$ are insincere.
 Now $(\dimvec T)_{n^{-}}\neq 0,$ so there exists some indecomposable direct summand
 $N$ s.t.\[(\dimvec N)_{n^{-}}\neq 0.\]
If $N=M(a,b)$ then $\Ext (M(a,b),L^{+}(0,n))=0=\Ext (L^{+}(0,n),M(a,b))$ so $a=0$ and $N$ is sincere. This is a contradiction.
 So $N=L^{-}(a,n)$ and $a=0$ by $L^{+ }(0,n)\mid T$.
\end{proof}
\begin{lem}
 \label{511}
  For all $T\in Tilt(Q)$ there exists some indecomposable direct summand $N$ of $T$ s.t.
\[(\dimvec N)_{i}\geq 1,\ \mathrm{for\ all}\ i\leq n-1.\] 
Thus, $N=L(0,n-1),L^{\pm }(0,n)$ or $M(0,b)$, for some $b$.
\end{lem}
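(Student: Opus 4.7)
First, the second sentence of the lemma is automatic from the first: reading off the supports from the formulas for $L(a,b)$, $L^{\pm}(a,n)$, $M(a,b)$ given before Lemma~\ref{lem9}, the indecomposables with $(\dimvec N)_i\geq 1$ for every $i\leq n-1$ are precisely $L(0,n-1)$, $L^{\pm}(0,n)$, and $M(0,b)$ for $0\leq b\leq n-1$. So I only need to exhibit a summand of $T$ of one of these three types.

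The plan is to choose a summand whose support contains vertex $1$ and reaches as far right as possible, and use Lemma~\ref{lem9} to show that it must already reach $n-1$. By sincerity of tilting modules over a hereditary algebra (the same fact used inside the proof of Lemma~\ref{lem2}: $(\dimvec T)_a\geq 1$ for every vertex $a$), some indecomposable summand of $T$ has vertex $1$ in its support. Inspecting the classification, any such summand is of the form $L(0,b)$, $L^{\pm}(0,n)$ or $M(0,b)$. If one of them is $L(0,n-1)$, $L^{\pm}(0,n)$, or $M(0,b)$, we are done, so suppose for contradiction that every summand of $T$ with vertex $1$ in its support is $L(0,b)$ with $b\leq n-2$, and let $b_0$ be the maximum such $b$.

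Since $b_0+1\leq n-1$, sincerity gives a summand $N'\mid T$ with $b_0+1\in\supp(N')$; in particular the left endpoint $a'$ of $N'$ satisfies $a'\leq b_0$, and $N'\not\cong L(0,b_0)$. Now apply Lemma~\ref{lem9} to the pair $(L(0,b_0),N')$ in the three possible shapes of $N'$. If $N'=L(a',b')$ with $b'\geq b_0+1$, compatibility of $[0,b_0]$ with $[a',b']$ rules out disjointness and rules out $[a',b']\subset[0,b_0]$, so $[0,b_0]\subset[a',b']$, forcing $a'=0$; but then $L(0,b')\mid T$ with $b'>b_0$, contradicting either maximality of $b_0$ or the reduction. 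If $N'=L^{\pm}(a',n)$, then compatibility of $[0,b_0]$ with $[a',n]$ again forces $a'=0$, giving $L^{\pm}(0,n)\mid T$, contradicting the reduction. If $N'=M(a',b')$, Lemma~\ref{lem9}(3) requires $[0,b_0]$ to be compatible with $[a',n]$, which as above forces $a'=0$, yielding $M(0,b')\mid T$, again against the reduction.

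The only thing requiring care is the uniform application of compatibility in the three cases for $N'$; everything else is bookkeeping on supports. There are no difficult calculations to grind through.
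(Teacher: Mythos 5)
Your proof is correct and follows essentially the same route as the paper: both arguments take the summand through vertex $1$ whose support extends farthest to the right, assume it stops at some $b_0<n-1$, and then use sincerity of $T$ at vertex $b_0+1$ together with the compatibility criterion of Lemma~\ref{lem9} to force a summand starting at $0$ and reaching past $b_0$, a contradiction. The paper phrases the final contradiction as $(\dimvec T)_{b_0+1}=0$ while you phrase it as a violation of maximality, but these are the same squeeze.
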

\begin{proof} For an indecomposable direct summand $N$ of $T$ s.t.$(\dimvec N)_{1}=1$,
 define \[a(N)\stackrel {def}{=} \sup \{i\mid 1\leq i\leq n-1, (\dimvec N)_{i}\geq 1\}.\]
Suppose that $\sup\ a(N)=a<n-1,$ then $L(0,a)\mid T$.
 So indecomposable direct summands of $T$ are of the following form
\[\begin{array}{ll}
L(a^{'},b^{'}) & \mathrm{for }\ b^{'}\leq a\ \mathrm{or}\ a+1\leq a^{'}, \\
L^{+}(a^{'},n) & \mathrm{for }\ a+1\leq a^{'}, \\
M(a^{'},b^{'}) & \mathrm{for }\ a+1\leq a^{'}. \\
\end{array} \]
So $(\dimvec T)_{a+1}=0.$ This is a contradiction.\end{proof}
\begin{lem}
 \label{512}
 We have \[\#\{i\mid 1\leq i\leq n-1,\ (\dimvec T)_{i}=1\}\leq 1.\]
In particular, $\delta (T)\geq n-2$. 
\end{lem}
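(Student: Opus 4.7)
The plan is to prove the first assertion by contradiction: assume indices $1 \le i < j \le n-1$ with $(\dimvec T)_i = (\dimvec T)_j = 1$. Lemma~\ref{511} tells me that $T$ must contain one of the ``big'' summands $L(0,n-1)$, $L^\pm(0,n)$, or $M(0,b_0)$, and I split into cases on which is present.

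If $L(0,n-1) \mid T$, then Lemma~\ref{510}(1) forces $L^+(0,n)$ and $L^-(0,n)$ into $T$ as well, and these three modules already contribute $3$ to every spine dimension. If instead some $L^\varepsilon(0,n) \mid T$ but $L(0,n-1) \nmid T$: either $L^{-\varepsilon}(0,n)$ is also present, in which case the two together force all spine dimensions to be $\ge 2$; or else Lemma~\ref{510}(2) supplies a sincere summand, which must be some $M(0,b)$ (the only sincere indecomposables in $\rep Q$), and $L^\varepsilon(0,n)$ together with $M(0,b)$ again makes every spine dimension $\ge 2$. Each of these outcomes contradicts $(\dimvec T)_i = 1$, so from now on I may assume that the only ``big'' summand in $T$ is some $M(0,b_0)$.

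This is the main case. Since $(\dimvec M(0,b_0))_k = 2$ for $k > b_0$, I must have $i < j \le b_0$; and since $M(0,b_0)$ already contributes $1$ to each of $(\dimvec T)_i$ and $(\dimvec T)_j$, every other summand of $T$ has support avoiding $\{i,j\}$. The key observation is that the support of an indecomposable in $\rep Q$ is connected (an interval on the spine, possibly extended into one or both of $n^\pm$), so each such summand has support contained entirely in one of the three disjoint regions
\[
\{1,\dots,i-1\}, \quad \{i+1,\dots,j-1\}, \quad \{j+1,\dots,n-1,n^+,n^-\}.
\]
Extensions of modules supported on a full subquiver remain supported there, so the summands in each region form a partial tilting module over the corresponding subquiver algebra, of size at most the number of its vertices. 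The three subquivers have $i-1$, $j-i-1$, and $n-j+1$ vertices respectively, so in total at most $(i-1)+(j-i-1)+(n-j+1)=n-1$ summands can sit outside $M(0,b_0)$. But $T$ is a basic tilting module over an algebra with $n+1$ simples, so it has exactly $n$ summands besides $M(0,b_0)$. This is the required contradiction.

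The second assertion is then immediate from Proposition~\ref{HU prop}: the just-proved bound of at most one dimension-$1$ spine vertex, together with the trivial bound of two for the branch vertices $n^\pm$, yields $\#\{a \in Q_0 : (\dimvec T)_a = 1\} \le 3$ and hence $\delta(T) \ge (n+1) - 3 = n-2$. The hardest step is the $M(0,b_0)$ case above; what needs to be set up carefully there is the claim that connected supports avoiding $\{i,j\}$ split cleanly into the three listed regions, and the partial-tilting-size bound on each subquiver algebra.
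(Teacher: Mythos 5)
Your proof is correct, and while it starts from the same place as the paper's, the decisive step is different. Both arguments use Lemmas \ref{510} and \ref{511} to rule out $L(0,n-1)$ and $L^{\pm}(0,n)$ and to isolate a summand $M(0,b_{0})$ with full support on the spine, and both then observe that every remaining summand must vanish at each spine vertex where $\dimvec T$ equals $1$. From there the paper argues that a \emph{single} such vertex $i$ already forces $L(0,i-1)\mid T$ (the possible shapes of the remaining summands give $\Ext (T,L(0,i-1))=0=\Ext (L(0,i-1),T)$, and an indecomposable Ext-orthogonal to a tilting module is a summand of it); two such vertices $i<j$ then clash because $L(0,j-1)$ and $M(0,b_{0})$ together force $(\dimvec T)_{i}\geq 2$. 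You instead cut the diagram at $i$ and $j$ into three regions of total size $(i-1)+(j-i-1)+(n-j+1)=n-1$ and count: the $n$ summands of the basic tilting module $T$ other than $M(0,b_{0})$ each have connected support avoiding $\{i,j\}$, hence each lies in one region, and the standard bound on the number of indecomposable summands of a rigid module over the corresponding subquiver algebra caps the total at $n-1$, a contradiction. Your route is self-contained modulo that rigidity bound, which the paper itself invokes without proof in Corollary~\ref{513}, and it avoids the ``Ext-orthogonal implies summand'' completion step; the paper's route yields the slightly stronger structural conclusion $L(0,i-1)\mid T$. Your initial case analysis and the final deduction of $\delta (T)\geq n-2$ from Proposition~\ref{HU prop} (three possible dimension-one vertices among the $n+1$ vertices of $Q_{n}$) agree with the paper.
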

\begin{proof} Let $i\neq n^{\pm }$ s.t. $(\dimvec T)_{i}=1.$ Then we claim that
\[L(0,i-1)\mid T.\]
By lemma\ref{511} there exists a unique indecomposable direct summand $N$ of $T$ s.t.
\[(\dimvec N)_{j}\geq 1\ \mathrm{for\ all}\ j\leq n-1.\]
So, by lemma\ref{510}, $N=M(0,j)$ for some $i \leq j\leq n-1$ and any indecomposable direct summand of $T$ not isomorphic to $N$ is one of the following,
\[\begin{array}{ll}
L(a,b) & \mathrm{for }\ b\leq i-1\ \text{or}\ i\leq a, \\
L^{\pm }(a,n) & \mathrm{for }\ i\leq a, \\
M(a,b) & \mathrm{for }\ i\leq a. \\
\end{array}\]
It implies  \[\Ext (T,L(0,i-1))=0=\Ext(L(0,i-1),T),\] so that \[L(0,i-1)\mid T.\]
\end{proof}
\begin{cor}
 \label{513}
  Let $T\in Tilt(Q)$ then $\delta (T)\geq n-1$,
 and $\delta (T)=n-1$ if and only if $L^{\pm }(0,n)\mid T$
 and other indecomposable direct
 summands of $T$ have the form $L(a,b) (0\leq a<b\leq n-1)$. In particular, 
\[\#\{T\in Tilt(kQ)\mid \delta (T)=n-1\}=\frac{1}{n}{2(n-1)\choose n-1}=\frac{1}{n-1}{2(n-1)\choose n-2}.\]
\end{cor}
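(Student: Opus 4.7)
The plan is to strengthen Lemma~\ref{512} from $\delta(T)\ge n-2$ to $\delta(T)\ge n-1$, identify which tilting modules achieve equality, and count them via a bijection with $Tilt(kA_{n-1})$. Throughout I use Proposition~\ref{HU prop} with $\#Q_0=n+1$, so that $\delta(T)=n-k$ is equivalent to $\#\{a:(\dimvec T)_a=1\}=k$.

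\emph{Step 1 (lower bound).} Assume for contradiction that $\delta(T)=n-2$. By Lemma~\ref{512} the three dim-$1$ vertices must be $\{n^+,n^-,i\}$ for some $i\in\{1,\ldots,n-1\}$. Pick an indecomposable summand $N\mid T$ as in Lemma~\ref{511} and case-split on $N\in\{L(0,n-1),L^\pm(0,n),M(0,b)\}$. If $N=L(0,n-1)$, Lemma~\ref{510}(1) forces $L^\pm(0,n)\mid T$ and hence $(\dimvec T)_i\ge 3$. If $N=L^\pm(0,n)$, the condition $(\dimvec T)_{n^\mp}=1$ combined with Lemma~\ref{lem9}(6) forces an $L^\mp(0,n)$-summand and hence $(\dimvec T)_i\ge 2$. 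If $N=M(0,b)$, the identities $(\dimvec T)_{n^\pm}=1$ saturate $n^\pm$ via $M(0,b)$, ruling out any further $L^\pm$- or $M$-summand, so the remaining $n$ indecomposable summands of $T$ are interval modules $L(c,d)$ supported on the $A_{n-1}$-subquiver on $\{1,\ldots,n-1\}$; being pairwise compatible, they form a rigid $kA_{n-1}$-module with $n$ non-isomorphic summands, contradicting the fact that a rigid module over a hereditary algebra with $n-1$ simples has at most $n-1$ non-isomorphic summands.

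\emph{Step 2 (equality).} Now suppose $\delta(T)=n-1$. Then either both dim-$1$ vertices are $\{n^+,n^-\}$ (Case A) or exactly one is in $\{n^+,n^-\}$, say $n^+$ (Case B, with $n^-$ symmetric), and the other is some $i\in\{1,\ldots,n-1\}$. Case B is excluded by the same four-way split: $N=L(0,n-1)$ and $N=L^\pm(0,n)$ reproduce the Step~1 contradictions; for $N=M(0,b)$, $(\dimvec T)_{n^+}=1$ again prevents any other $L^+$- or $M$-summand, so $T\setminus\{M(0,b)\}$ consists of $n$ pairwise compatible modules on the $A_n$-subquiver on $\{1,\ldots,n-1,n^-\}$, which is a tilting of $kA_n$ and must contain its projective-injective $L^-(0,n)$; but then $L^-(0,n)$ contributes $1$ at vertex $i$ on top of $M(0,b)$'s contribution, giving $(\dimvec T)_i\ge 2$, contradiction. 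In Case A, the same analysis yields $L(0,n-1)\mid T$: $N=M(0,b)$ again produces a rigid $kA_{n-1}$-module with $n$ summands, impossible; $N=L^\pm(0,n)$ with $L(0,n-1)\nmid T$ would make $T\setminus\{L^+(0,n),L^-(0,n)\}$ a tilting of $kA_{n-1}$ omitting its projective-injective $L(0,n-1)$, also impossible. Hence $L(0,n-1)\mid T$, Lemma~\ref{510}(1) gives $L^\pm(0,n)\mid T$, and the remaining $n-2$ summands together with $L(0,n-1)$ form a tilting of $kA_{n-1}$.

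\emph{Step 3 (count and obstacle).} Conversely, any tilting module $T_0$ of $kA_{n-1}$ produces a valid $T=L^+(0,n)\oplus L^-(0,n)\oplus T_0$ with $\delta(T)=n-1$; pairwise Ext-vanishing follows from Lemma~\ref{lem9}(2),(5),(6), and $L(0,n-1)\mid T_0$ ensures $(\dimvec T)_i\ge 3$ for $i\le n-1$. Hence the $\delta(T)=n-1$ tiltings are in bijection with $Tilt(kA_{n-1})$, whose cardinality is $\frac{1}{n}\binom{2(n-1)}{n-1}$ by the formula in the introduction, and the alternative form $\frac{1}{n-1}\binom{2(n-1)}{n-2}$ follows from the binomial identity $\binom{2(n-1)}{n-2}=\frac{n-1}{n}\binom{2(n-1)}{n-1}$. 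The main obstacle is controlling the cases with $N=M(0,b)$: the key is to recognize $T\setminus\{M(0,b)\}$ as a rigid module over a smaller $A$-type subalgebra, whose summand count either exceeds the number of simples (Step~1 and Case~A) or forces a specific projective-injective summand violating the dim-$1$ hypothesis (Case~B).
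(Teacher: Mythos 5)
Your argument is correct and uses essentially the same ingredients and strategy as the paper's proof: Lemmas \ref{510}, \ref{511} and \ref{512} combined with counting pairwise compatible intervals (i.e.\ rigid modules over type-$A$ subquivers), differing only in how the cases are organized and in replacing the paper's direct summand count in the $M(0,b)$ cases by the observation that the remaining summands would form a type-$A$ tilting module forced to contain its projective--injective. The only slip is cosmetic: since $\#Q_{0}=n+1$, the equivalence in your preamble should read $\#\{a:(\dimvec T)_{a}=1\}=k+1$, which is in fact the convention you use afterwards (three dimension-one vertices when $\delta(T)=n-2$, two when $\delta(T)=n-1$), so nothing downstream is affected.
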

\begin{proof} Suppose that all indecomposable direct summands of $T$ are insincere.
Then, by lemma\ref{510} and \ref{511}, $L^{+}(0,n)$ and $L^{-}(0,n)$ are both direct summands of $T$.
 So $(\dimvec T)_{i}=1$ if and only if $i=n^{\pm }$. So we have $\delta (T)\geq n-1$.
 If the equality holds
 then indecomposable direct summands of $T$ not isomorphic to $L^{\pm }(0,n)$ are of the form $L(a,b)$.

Next we suppose there is a sincere indecomposable direct summand $N$ of $T$. If $\delta (T)=n-2$ then, by lemma\ref{512}, there is a unique $i\leq n-1$ s.t.
\[(\dimvec T)_{i}=(\dimvec T)_{n^{\pm }}=1.\]
 So all indecomposable direct summands
 of $T$ not isomorphic to $N$ are of the form $L(a,b)\ (b<i$ or $i\leq a).$ As their direct sum may be viewed as a rigid module
 in type $A_{i-1}\times A_{n-i-1},$ we get  
\[\# \{L(a,b)\mid \ L(a,b)\mid T\}\leq (i-1)+(n-1-i)=n-2,\] which is a contradiction.
Next we consider the case  $\delta (T)=n-1$.

$(a):(\dimvec T)_{i}=(\dimvec T)_{n^{+}}=1$, for a unique $i(\leq n-1)$.
 Then indecomposable direct summands of $T$ not isomorphic to $N$ are of the following form:      
\[\begin{array}{ll}
L(a,b) & \mathrm{for }\ b< i\ \mathrm{or}\ i\leq a, \\
L^{-}(a,n) & \mathrm{for }\ i\leq a .\\
\end{array}\]
We get by the same argument that
\[\# \{L\in \ind kQ\mid \ L\mid T,\ L\neq N\}\leq (i-1)+(n-i)=n-1,\]
 which is a contradiction.

$(b):(\dimvec T)_{i}=(\dimvec T)_{n^{-}}=1$, for a unique $i(\leq n-1)$. Then, similar to $(a)$,
 we reach a contradiction.

$(c):(\dimvec T)_{n^{\pm }}=1.$
 Then indecomposable direct summands of $T$ not isomorphic to $N$ are of the form $L(a,b)$. Thus
 \[\# \{L(a,b)\mid \ L(a,b)\mid T\}\leq n-1.\]
 It is a contradiction. So we get $\delta (T)\geq n$ and $\delta (T)=n-1$ does not occur in this case.
 
Thus we have proved that if $\delta (T)=n-1$ then $L^{\pm }(0,n)\mid T$ and the other indecomposable 
 direct summands of $T$ has the form $L(a,b)$. The converse implication is clear.
\end{proof}
Now we define subsets of $Tilt(Q)$ by 
\[\begin{array}{lll}
\mathcal{T}_{0} & := & \{T\in Tilt(Q)\mid \delta (T)=n+1\}, \\
\mathcal{T}_{1} & := & \{T\in Tilt(Q)\mid \delta (T)=n\}, \\
\mathcal{T}_{2} & := & \{T\in Tilt(Q)\mid \delta (T)=n-1\}. \\
\end{array}\]
\begin{lem}
 \label{514}
  Fix $1\leq i\leq n-1$, then 
\[\begin{array}{l}
\{T\in \mathcal{T}_{1} \mid (\dimvec T)_{i}=1\} \stackrel {1:1}{\longleftrightarrow} \\
 Tilt(\circ \rightarrow \circ \rightarrow
 \cdots \rightarrow \stackrel {i-1}{\circ} )\times \{T\in Tilt(Q_{n-i+1})\mid (\dimvec T)_{1}=1,\ \delta (T)=n-i+1\}.\\
\end{array}\]
\end{lem}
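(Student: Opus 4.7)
The plan is to construct an explicit bijection by decomposing $T$ according to support at vertex $i$. By the proofs of Lemmas~\ref{511} and~\ref{512}, a $T \in \mathcal{T}_1$ with $(\dimvec T)_i = 1$ has a unique sincere indecomposable summand of the form $M(0, j)$ with $j \ge i$, contains $L(0, i-1)$ as a summand, and every other indecomposable summand has support disjoint from $\{i\}$, falling either into the \emph{left type} $L(a, b)$ with $b \le i-1$, or the \emph{right type} $L(a, b)$, $L^\pm(a, n)$, or $M(a, b)$ with $a \ge i$.

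The bijection is $T \mapsto (T_1, T_2)$, where $T_1$ collects all left-type summands (including $L(0, i-1)$) and $T_2$ is obtained from the sincere $M(0, j)$ together with all right-type summands via the shift $\phi$ sending vertex $i+j-1$ to $j$ and $n^\pm$ to $(n-i+1)^\pm$ (so that $M(0, j) \mapsto M(0, j-i+1)$, and the right-type modules shift in the obvious way). Then $T_1 \in Tilt(A_{i-1})$, being rigid as a summand of $T$ with exactly $i-1$ indecomposable summands; and $T_2 \in Tilt(Q_{n-i+1})$, since the compatibility conditions of Lemma~\ref{lem9} transfer verbatim under $\phi$ and the summand count is $n-i+2$. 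A direct dimension computation gives $(\dimvec T_2)_1 = 1$ (only the shifted sincere contributes at vertex $1$) and $(\dimvec T_2)_v = (\dimvec T)_{\phi^{-1}(v)} \ge 2$ at every other vertex $v$ (using $\delta(T) = n$ and Lemma~\ref{512}), whence $\delta(T_2) = n-i+1$.

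For the inverse, given $(T_1, T_2)$ with the stated properties, the hypotheses on $T_2$ combined with Lemmas~\ref{510} and~\ref{511} force the sincere summand of $T_2$ to be $M(0, b')$ for some $1 \le b' \le n-i$ with all other summands satisfying $a \ge 1$; one applies $\phi^{-1}$ and pastes with $T_1$ to produce $T$. Cross rigidity between the $T_1$-part and the right-hand part is immediate from Lemma~\ref{lem9}: any interval $[a, b] \subseteq [0, i-1]$ is either contained in $[0, n]$ or disjoint from any $[a', \cdot] \subseteq [i, n]$. The total number of summands is $(i-1) + (n-i+2) = n+1$, matching $\#(Q_n)_0$, so $T$ is a tilting module since $kQ_n$ is hereditary. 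Finally $(\dimvec T)_i = 1$ (only the sincere contributes there), and $\delta(T) = n$ by tracking dimensions at the remaining vertices: at each left vertex $j \le i-1$ one has $(\dimvec T)_j = (\dimvec T_1)_j + 1 \ge 2$, and at each right vertex $(\dimvec T)$ inherits $\ge 2$ from $(\dimvec T_2)$. The main subtlety is this dimension bookkeeping --- the sincere summand $M(0, j)$ straddles both halves and must map consistently to the sincere of $T_2$ under $\phi$ so that the $\delta$-conditions on both sides correspond.
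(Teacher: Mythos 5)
Your proof is correct and follows essentially the same route as the paper: split $T$ into the summands supported on $\{1,\dots,i-1\}$, the unique sincere summand $M(0,j)$, and the summands supported on $\{i,\dots,n-1,n^{\pm}\}$, send the first group to $Tilt(A_{i-1})$ by restriction and the rest to $Tilt(Q_{n-i+1})$ by the index shift $v\mapsto v-i+1$. Your write-up actually supplies more of the verification (rigidity transfer, summand counting, dimension bookkeeping) than the paper's rather terse argument does.
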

 \begin{proof} Let $T\in \mathcal{T}_{1} $ s.t. $(\dimvec T)_{i}=1$, for a unique $i(\leq n-1).$
 By lemma\ref{510} and \ref{511} there exists a unique $j=j(T)(\geq i)$ s.t.
 $M(0,j)\mid T.$ Now let \[X(T)=\{L(a,b)\mid \ L(a,b)\mid T,\ b<i\}\] and
 \[Y(T)=\{N\in \ind kQ \mid \ N\mid T\}\setminus \left\{X(T)\cup \{M(0,j)\}\right\}.\] We define the maps  
\[\varphi_{T}:\ X(T)\longrightarrow \ind k(\circ \rightarrow \circ \rightarrow
 \cdots \rightarrow \stackrel {i-1}{\circ})\] and \[\psi _{T}:\ Y(T)\longrightarrow \ind kQ_{n-i+1},\] 
by
\[(\varphi _{T}(N))_{a}=(N)_{a}\ (1\leq a <i),\]
\[(\psi _{T}(N))_{a}=(N)_{a+i-1}\ (\mathrm{let}\ (n-i+1)^{\pm }+i-1=n^{\pm }).\]
Then 
 \[T\longmapsto \left(\bigoplus_{x\in X(T)} \varphi_{T}(x),\bigoplus_{y\in Y(T)} \psi_{T}(y)\bigoplus M(0,j(T)-i+1)\right)\]
induces a bijection between
\[\{T\in \mathcal{T}_{1} \mid (\dimvec T)_{i}=1\}\] and
\[Tilt(\circ \rightarrow \circ \rightarrow
 \cdots \rightarrow \stackrel {i-1}{\circ} )\times \{T\in Tilt(Q)\mid (\dimvec T)_{1}=1,\ \delta (T)=n-i+1\}.\]
\end{proof}
\ \ Let us define the following subsets of $\mathcal{T}_{1}$:
\[\begin{array}{lll}
\mathcal{A}_{\pm } & := & \left\{T\in \mathcal{T}_{1}\mid  \begin{array}{l}
 \mathrm{all}\ \mathrm{indecomposable\ direct\ summands\ of}\ T\ \mathrm{is\ not\ sincere} \\
\text{and}\ (\dimvec T)_{n^{\pm }}=1
\end{array}\right\}, \\\\
\mathcal{B}_{\pm } & := & \{T\in \mathcal{T}_{1}\mid  (\dimvec T)_{n^{\pm }}=1, \mathrm{there}\ \mathrm{exists}\ \mathrm{some}\ j\ s.t.M(0,j)\mid T\},\\
\mathcal{B}_{\pm }(j) & := & \{T\in \mathcal{B}_{\pm }\mid \ M(0,j)\mid T\},\\
\mathcal{C} & := & \{T\in \mathcal{T}_{1}\mid (\dimvec T)_{1}=1\},\\
\mathcal{C}(j) & := & \{T\in \mathcal{C}\mid \ M(0,j)\mid T\}.\\
\end{array} \]
\begin{thm} 
 \label{515}
  $(1):\mathcal{A}_{\pm }=\emptyset$.\\
  $(2):\mathcal{B}_{\pm }(j)\stackrel{1:1}{\longleftrightarrow} \{T^{'}\in Tilt(\circ \rightarrow 
 \cdots \rightarrow \stackrel{n}{\circ })\mid min\{j^{'}\mid L(j^{'},n-1)\mid T^{'}\}=j\}.$ In particular,
  \[\mathcal{B}_{\pm }\stackrel{1:1}{\longleftrightarrow}Tilt(\circ \rightarrow \cdots \rightarrow \stackrel{n}{\circ })\setminus \{T^{'}\in Tilt(\circ \rightarrow
 \cdots \rightarrow \stackrel{n}{\circ })\mid  L(0,n-1)\mid T^{'}\},\]
and we have \[\#\mathcal{B}_{\pm }=\frac{1}{n+1}{2n\choose n}-\frac{1}{n}{2(n-1)\choose n-1}.\]
  $(3):\mathcal{C}(j)\stackrel{1:1}{\longleftrightarrow} \{T^{'}\in Tilt(Q_{n-1})\mid j^{'}(T^{'})=j-1\}$\\
where 
\[j^{'}(T^{'})=\sup \{b\mid L^{+}(b,n-1)\ \mathrm{or}\ L^{-}(b,n-1)\ \mathrm{or}\ M(a,b)\mid T^{'} \mathrm{\ for\ some}\ a  \}.\]
 In particular,  
\[\mathcal{C}\stackrel{1:1}{\longleftrightarrow}Tilt(Q_{n-1}),\]
and we have 
\[\#\mathcal{C}=\frac{3n-4}{2n}{2(n-1)\choose n-1}.\]
\end{thm}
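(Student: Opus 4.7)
Plan: I address the three parts in sequence, with the bijections in (2) and (3) carrying most of the work.

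Part (1): Suppose for contradiction that some $T \in \mathcal{A}_+$ exists (the case $\mathcal{A}_-$ being symmetric). By Lemma~\ref{511}, $T$ has an indecomposable summand $N$ with $(\dimvec N)_i \geq 1$ for all $i \leq n-1$; since every summand of $T$ is insincere, $N$ cannot be $M(0,b)$, leaving $N \in \{L(0,n-1), L^+(0,n), L^-(0,n)\}$. Lemma~\ref{510}(1) (when $N = L(0,n-1)$) together with Lemma~\ref{510}(2) (in the remaining cases) forces both $L^+(0,n)$ and $L^-(0,n)$ to be summands of $T$. The hypothesis $(\dimvec T)_{n^+} = 1$ rules out every further summand touching $n^+$, in particular any $M(a,b)$-summand; but then Lemma~\ref{lem9}(6) says $L^+(0,n)$ is $\Ext$-compatible with $L^-(c,n)$ only when $c=0$, so $L^-(0,n)$ is the unique $L^-$-summand and $(\dimvec T)_{n^-} = 1$. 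Two vertices of dimension $1$ contradict $\delta(T) = n$, so $\mathcal{A}_+ = \emptyset$.

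Part (2): Fix the sign $+$. For $T = M(0,j) \oplus T_0 \in \mathcal{B}_+(j)$, the constraint $(\dimvec T)_{n^+} = 1$ forces every summand of $T_0$ to be of the form $L(a,b)$ with $b \leq n-1$ or $L^-(c,n)$. Define $\Phi(T)$ by discarding $M(0,j)$ and relabeling each $L^-(c,n)$ as $L(c,n)$. The compatibility relations of Lemma~\ref{lem9}(1),(2),(5) become interval compatibility in $A_n$, so $\Phi(T)$ is rigid with exactly $n$ indecomposable summands and is therefore a tilting module of $A_n$. Compatibility of each $L(a,n-1) \mid T$ with $M(0,j)$ forces $a \geq j$, whence $\min\{j' : L(j',n-1) \mid \Phi(T)\} \geq j$. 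For the inverse $T' \mapsto T' \oplus M(0,j)$ (with $L(c,n) \mapsto L^-(c,n)$) to be well defined on $\{T' : \min\{j'\} = j\}$, I check that compatibility of $M(0,j)$ with each summand of $T'$ reduces, via Lemma~\ref{lem9}(2),(3), to compatibility of that summand with $L(j,n-1) \mid T'$, which is part of $T'$ being rigid. The two constructions are mutually inverse by inspection, giving the bijection $\mathcal{B}_+(j) \leftrightarrow \{T' : \min\{j'\} = j\}$. Summing over $j$, and using $|Tilt(A_n)| = C_n$ together with the standard identity that the number of tilting modules of $A_n$ containing $L(0,n-1)$ is $C_{n-1}$, yields $\#\mathcal{B}_\pm = C_n - C_{n-1}$.

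Part (3): Define $\Lambda \colon \mathcal{C}(j) \to Tilt(Q_{n-1})$ by discarding $M(0,j)$ and applying the shift $L(a,b) \mapsto L(a-1,b-1)$, $L^\pm(c,n) \mapsto L^\pm(c-1,n-1)$, $M(a,b) \mapsto M(a-1,b-1)$ to every other summand. Since $(\dimvec T)_1 = 1$ forces each non-$M(0,j)$ summand to have first index $\geq 1$, the shift is defined, and Lemma~\ref{lem9} shows that compatibilities are preserved, so $\Lambda(T)$ is rigid with $n$ summands in $Q_{n-1}$ and is therefore a tilting. The inverse $\Lambda^{-1}$ shifts indices back up and adjoins $M(0,j)$; the required compatibility of this $M(0,j)$ with the shifted-up summands of $T'$ reduces to (a) $j'(T') \leq j-1$, and (b) no $L(a,b) \mid T'$ with $a < j-1 \leq b$. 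Condition (b) follows from the equality $j'(T') = j-1$: any $L(a,b)$ with $a < j-1 \leq b$ would be incompatible (by Lemma~\ref{lem9}(2),(3)) with a summand $L^\pm(j-1,n-1)$ or $M(\ast, j-1) \mid T'$ realizing $j'(T')$. Because $\Lambda$ and $\Lambda^{-1}$ are mutually inverse by construction, and because each tilting of $Q_{n-1}$ has some $L^\pm(\ast, n-1)$ or $M(\ast, \ast)$ summand (as $(\dimvec T')_{(n-1)^\pm} \geq 1$), the sets $\{T' : j'(T') = j-1\}$ partition $Tilt(Q_{n-1})$ as $j$ varies. Matching this to $\mathcal{C} = \bigsqcup_j \mathcal{C}(j)$ yields both the piecewise bijections and the global count $\#\mathcal{C} = |Tilt(Q_{n-1})| = \frac{3n-4}{2n}\binom{2(n-1)}{n-1}$ by induction.

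The main obstacle in both (2) and (3) is pinning down the extremal summand: in (2), that $L(j,n-1)$ actually appears in $\Phi(T)$; in (3), that $j'(\Lambda(T)) = j-1$ exactly rather than merely $\leq j-1$. A direct dimension count seems insufficient, since $(\dimvec T)_j \geq 2$ leaves room for vertex $j$ to be covered by several smaller summands rather than by the extremal one. The cleanest route is the inverse-construction argument above together with the matching of source and target partitions, which sidesteps this issue altogether.
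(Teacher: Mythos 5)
Your part (1) is correct and is essentially the paper's argument. Parts (2) and (3) set up exactly the paper's bijections (discard $M(0,j)$ and relabel; adjoin $M(0,j)$ for the inverse), and your verification that the inverse maps are well defined on the fibers $\{T^{'}: \min\{j^{'}\}=j\}$ and $\{T^{'}: j^{'}(T^{'})=j-1\}$ is sound. But there is a genuine gap at exactly the point you flag yourself: you never show that the forward map sends $\mathcal{B}_{\pm}(j)$ into $\{\min\{j^{'}\}=j\}$ (rather than merely $\geq j$), nor that $j^{'}(\Lambda(T))=j-1$ exactly (rather than merely $\leq j-1$). The claim that the ``inverse-construction argument together with the matching of source and target partitions sidesteps this issue'' does not hold up. What that argument actually yields is one inequality per fiber: since $\Psi_{j}$ (resp.\ $\Lambda^{-1}$) is well defined on the fiber and has the forward map as a left inverse, you get $\#\mathcal{B}_{\pm}(j)\geq \#\{\min=j\}$ and $\#\mathcal{C}(j)\geq \#\{j^{'}=j-1\}$. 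The reverse inequality does not follow: injectivity of the forward map only places its image inside the union of fibers with index $\leq j$ (resp.\ $\geq j$), and summing over $j$ then overcounts. Saying the two constructions are ``mutually inverse by construction'' presupposes precisely the containment of the image in the correct fiber, which is the statement you set out to avoid proving. (A global injectivity argument for $\Phi$ on all of $\mathcal{B}_{\pm}$ would also close the gap, but that requires knowing that the non-sincere almost complete tilting module $T/M(0,j)$ has a unique complement, which you do not invoke.)

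The paper closes this gap with a short maximality argument for which you already have all the ingredients: over a hereditary algebra a tilting module is maximal rigid, so any indecomposable that is $\Ext$-orthogonal to every summand of $T$ is itself a summand of $T$. From the list of possible shapes of the summands of $T\in\mathcal{B}_{+}(j)$ (namely $M(0,j)$, $L(a,b)$ with $b<j$ or $a\geq j$, and $L^{-}(a,n)$ with $a\leq j$), Lemma~\ref{lem9} shows that $L(j,n-1)$ and $L^{-}(j,n)$ are orthogonal to all of them, hence are summands of $T$; this pins the minimum at exactly $j$. Similarly in (3) one checks that $M(1,j)$ is orthogonal to every summand of $T\in\mathcal{C}(j)$, so $M(1,j)\mid T$ and $j^{'}(\Lambda(T))=j-1$ exactly. (One should also treat the boundary case $j=n-1$ in (2), where $L(n-1,n-1)$ does not exist and the corresponding fiber consists of the $T^{'}$ with no summand of the form $L(j^{'},n-1)$ at all; the union over $j$ is still $Tilt(\circ\rightarrow\cdots\rightarrow\stackrel{n}{\circ})\setminus\{T^{'}: L(0,n-1)\mid T^{'}\}$.) With these insertions your argument coincides with the paper's proof.
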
 
\begin{proof} $(1):$ Suppose that there exists some $T\in \mathcal{A}_{+}.$ Then,
 by lemma\ref{511}, we have $L^{\pm }(0,n)\mid T.$ 
Now there exists 
 some indecomposable direct summand $N$ of $T$ not 
isomorphic to $L^{-}(0,n)$ s.t. 
$(\dimvec N)_{n^{-}}=1$. 

If $N=M(a,b)$ or $L^{-}(a,n)$ then $a=0.$ This is a contradiction because $L^{\pm }(0,n)\mid T$.
So $\mathcal{A}_{+}=\emptyset $ and similarly we have $A_{-}=\emptyset $.

$(2):$ Define the maps \[\begin{array}{rl}
\varphi :\{L(a,b)\mid 0\leq a< b\leq n-1\}\cup \{L^{-}(a,n)\mid 0\leq a\leq n-1\} &  \\
\longrightarrow & \ind  k(\circ \rightarrow \circ \rightarrow
 \cdots \rightarrow \stackrel {n}{\circ}) \\
\end{array}\]
 and \[\begin{array}{rc}
\psi :\ind k(\circ \rightarrow \circ \rightarrow
 \cdots \rightarrow \stackrel {n}{\circ}) &  \\
\longrightarrow & \{L(a,b)\mid 0\leq a< b\leq n-1\}\cup \{L^{-}(a,n)\mid 0\leq a\leq n-1\} \\
\end{array} \] by
\[\begin{array}{lll}
(\varphi (L))_{a} & = & \left\{ \begin{array}{ll}
L_{a} & \mathrm{if}\ 0\leq  a\leq n-1, \\
L_{n^{-}} & \mathrm{if }\ a=n,\\
\end{array}\right. \\
(\psi (L^{'}))_{a} & = & \left\{ \begin{array}{ll}
L^{'}_{a} & \mathrm{if }\ 0\leq a\leq n-1, \\
L^{'}_{n} & \mathrm{if }\ a=n^{-}, \\
0 & \mathrm{if }\ a=n^{+}. \\
\end{array} \right. \\
\end{array}\]
Then  $\varphi \circ \psi =1=\psi \circ \varphi $. For $T\in \mathcal{B}_{+}(j)$ and $T^{'}\in Tilt(\circ \rightarrow
 \cdots \rightarrow \stackrel{n}{\circ })$, define 
\[Z(T):=\{N\in \ind kQ\mid N\mid T,N\not\simeq M(0,j)\}\]
and
\[Y(T^{'}):=\{N\in \ind k(\circ \rightarrow 
 \cdots \rightarrow \stackrel{n}{\circ })\mid N\mid T^{'}\}.\]
Then it is easy to see that the maps induce a bijection
\[ \mathcal{B}_{+}(j)\stackrel{1:1}{\longleftrightarrow} \{T^{'}\in Tilt(\circ \rightarrow
 \cdots \rightarrow \stackrel{n}{\circ })\mid min\{j^{'}\mid L(j^{'},n-1)\mid T^{'}\}=j\}\]
 by
\[T\mapsto \bigoplus_{L\in Z(T)}\varphi(L).\]
 The inverse map is
\[T^{'}\mapsto \left(\bigoplus_{L^{'}\in Y(T^{'})}\psi (L^{'})\right)\oplus M(0,j).\]   
In fact, if $T\in \mathcal{B}_{+}(j)$ then all indecomposable direct summands of $T$ not isomorphic to $M(0,j)$  are either
\[ L(a,b)\;(a\geq j\ \text{or}\ b<j)\ \text{or } L^{-}(a,n)\;(a\leq j),\] which implies $L(j,n-1),L^{-}(j,n)\mid T$. It follows 
\[ min\{j^{'}\mid L(j^{'},n-1)\mid \bigoplus_{L\in Z(T)}\varphi(L)\}=j.\]
Conversely, if \[T^{'}\in \{T^{'}\in Tilt(\circ \rightarrow
 \cdots \rightarrow \stackrel{n}{\circ })\mid min\{j^{'}\mid L(j^{'},n-1)\mid T^{'}\}=j\}\] then $\left(\bigoplus_{L^{'}\in Y(T^{'})}\psi (L^{'})\right)\oplus M(0,j)\in 
\mathcal{B}_{+}(j).$
 
$(3):$ Define the maps
 \[\varphi :\{N\in \ind kQ_{n}\mid (\dimvec N)_{1}=0\}\longrightarrow \ind kQ_{n-1}\]
 and 
 \[\psi :\ind kQ_{n-1}\longrightarrow \{N\in \ind kQ_{n}\mid (\dimvec N)_{1}=0\}\] 
 by the obvious way. Then $\varphi \circ \psi =1=\psi \circ \varphi $. For $T\in \mathcal{C}(j)$ and $T^{'}\in Tilt(Q_{n-1})$, define   
\[Z(T):= \{N\in \ind kQ_{n}\mid N\mid T,N\not\simeq M(0,j)\}\]
and
\[Y(T^{'}):=\{N\in \ind kQ_{n-1}\mid N\mid T^{'}\}.\]
Then they induce a bijection 
\[\mathcal{C}(j)\stackrel{1:1}{\longleftrightarrow } \{T^{'}\in Tilt(Q_{n-1})\mid j^{'}(T^{'})=j-1\}\]
by \[T\longmapsto
 \bigoplus_{N\in Z(T)} \varphi (N).\]
 The inverse map is 
\[T^{'}\longmapsto 
 \left( \bigoplus_{N^{'}\in Y(T^{'})} \psi (N^{'})\right)\oplus M(0,j).\]
In fact, if $T\in \mathcal{C}(j)$ then \[\begin{array}{ll}
Z(T)\subset & \{L(a,b)\mid 1\leq a<b<j\}\cup \{L^{\pm }(b,n)\mid 1\leq b\leq j\} \\
 & \cup \{M(a,b) \mid 1\leq a<b\leq j\}. \\
\end{array}\]
It implies $M(1,j)\mid T$ and  $j^{'}\left( \bigoplus_{N\in Z(T)} \varphi (N) \right)=j-1.$
Conversely, if $j=j^{'}(T^{'})+1$ then \[\begin{array}{ll}
Y(T^{'})\subset & \{L_{n-1}(a,b)\mid b\leq j-2\ \text{or}\ a\geq j-1\}\cup \{L_{n-1}^{\pm }(b,n-1)\mid 0\leq b\leq j-1\} \\
 & \cup \{M_{n-1}(a,b) \mid 0\leq a<b\leq j-1\}, \\
\end{array}\] and \[\left(\dimvec \oplus_{N^{'}\in Y(T^{'})} \psi (N^{'})\right)_{a}\left\{ \begin{array}{ll}
\geq 1 & (a\geq 2) \\
=0 & (a=1). \\
\end{array}\right.\] It implies 
\[\left( \bigoplus_{N^{'}\in Y(T^{'})} \psi (N^{'})\right)\oplus M(0,j)\in \mathcal{C}(j).\] 
\end{proof}
\begin{cor} \[\# \mathcal{T}_{1}=3{2(n-1)\choose n-2}. \]
\end{cor}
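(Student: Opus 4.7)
The plan is to decompose $\mathcal{T}_1$ according to the location of the unique vertex $a$ with $(\dimvec T)_a = 1$. By Proposition~\ref{HU prop} (noting $\#(Q_n)_0 = n+1$), the condition $\delta(T) = n$ is equivalent to the existence of exactly one such $a$, giving three cases: $a = n^+$, $a = n^-$, or $a = i$ for some $1 \le i \le n-1$. The subsets $\{T \in \mathcal{T}_1 \mid (\dimvec T)_{n^\pm} = 1\}$ decompose as $\mathcal{A}_\pm \sqcup \mathcal{B}_\pm$, and Theorem~\ref{515}(1),(2) gives $\mathcal{A}_\pm = \emptyset$ together with $\#\mathcal{B}_\pm = C_n - C_{n-1}$, where $C_k := \frac{1}{k+1}\binom{2k}{k}$. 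For $a = i$, Lemma~\ref{514} combined with Theorem~\ref{515}(3) gives $\#\{T \in \mathcal{T}_1 \mid (\dimvec T)_i = 1\} = C_{i-1}\cdot\#Tilt(Q_{n-i})$.

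Rewriting $\#Tilt(Q_{n-i})$ in Catalan form via $\binom{2m}{m} = (m+1)C_m$ produces $\frac{3(n-i)-1}{2}C_{n-i}$, so after reindexing $k = i-1$ the contribution from the cases $a = i$ equals
\[S := \frac{1}{2}\sum_{k=0}^{n-2} (3n - 3k - 4)\,C_k\,C_{n-k-1} = \frac{(3n-4)\,T_1 - 3\,T_2}{2},\]
where $T_1 := \sum_{k=0}^{n-2} C_k C_{n-k-1}$ and $T_2 := \sum_{k=0}^{n-2} k\,C_k\,C_{n-k-1}$. Applying the Catalan convolution $\sum_{k=0}^{n-1}C_k C_{n-1-k} = C_n$ and the symmetry $k \leftrightarrow n-1-k$ (which gives $\sum_{k=0}^{n-1} k C_k C_{n-1-k} = \frac{(n-1)C_n}{2}$), and subtracting the $k = n-1$ boundary term, yields $T_1 = C_n - C_{n-1}$ and $T_2 = \frac{(n-1)C_n}{2} - (n-1)C_{n-1}$.

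Plugging these into $S$ and using $C_n = \frac{2(2n-1)}{n+1}C_{n-1}$ collapses $S$ to $\frac{3(n-1)^2}{n+1}C_{n-1}$. Adding the $\mathcal{B}_\pm$ contributions gives
\[\#\mathcal{T}_1 = \frac{3(n-1)^2}{n+1}C_{n-1} + 2(C_n - C_{n-1}) = \frac{3(n-1)^2 + 6(n-1)}{n+1}C_{n-1} = 3(n-1)C_{n-1},\]
which equals $3\binom{2(n-1)}{n-2}$ via $\binom{2(n-1)}{n-2} = \frac{n-1}{n}\binom{2(n-1)}{n-1} = (n-1)C_{n-1}$. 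The main obstacle will be the Catalan bookkeeping: the convolution identity is naturally indexed $k = 0, \ldots, n-1$, whereas our sums run only to $n-2$, so the $k = n-1$ boundary terms must be peeled off correctly in both $T_1$ and $T_2$ before the convolution and its first-moment variant can be invoked to collapse the mixed sum into a single Catalan multiple.
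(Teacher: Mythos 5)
Your decomposition of $\mathcal{T}_1$ according to the unique vertex $a$ with $(\dimvec T)_a=1$, the identification of the three contributions via Theorem \ref{515}(1),(2) and Lemma \ref{514} combined with Theorem \ref{515}(3), and the resulting expression $2(C_n-C_{n-1})+\tfrac12\sum_{i=1}^{n-1}(3(n-i)-1)\,C_{i-1}C_{n-i}$ (with $C_k$ the $k$-th Catalan number) coincide exactly with the first line of the paper's computation, and all of your subsequent identities check out: $T_1=C_n-C_{n-1}$, $T_2=\tfrac{(n-1)C_n}{2}-(n-1)C_{n-1}$, $S=\tfrac{3(n-1)^2}{n+1}C_{n-1}$, and the final collapse to $3(n-1)C_{n-1}=3{2(n-1)\choose n-2}$ are all correct. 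The only place you genuinely diverge from the paper is in how the convolution sum is evaluated: the paper splits the coefficient as $\tfrac{3(n-i)-1}{2(n-i+1)}=\tfrac32-\tfrac{2}{n-i+1}$ so that the $2(C_n-C_{n-1})$ term cancels outright, reducing everything to $\tfrac32\sum_{i}\tfrac1i{2(i-1)\choose i-1}{2(n-i)\choose n-i}$, which it then evaluates by extracting a coefficient from the derivative of the square of a generating function; you instead compute the zeroth and first moments of the truncated Catalan convolution directly, getting the first moment from the symmetry $k\leftrightarrow n-1-k$. Both routes are valid; yours trades the generating-function step for a little more polynomial algebra at the end. One point you might make explicit (the paper is equally silent about it): the partition $\{T\in\mathcal{T}_1\mid (\dimvec T)_{n^{\pm}}=1\}=\mathcal{A}_{\pm}\sqcup\mathcal{B}_{\pm}$ relies on the fact that the sincere indecomposables of $kQ_n$ are precisely the modules $M(0,b)$, which follows from the classification of $\ind kQ_n$ given at the start of the subsection.
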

\begin{proof} First we claim that \[\sum_{i=1}^{n}\frac{1}{i(n+1-i)}{2(i-1)\choose i-1}{2(n-i)\choose n-i}=\frac{1}{n+1}{2n\choose n}.\] This follows from the fact that \[Tilt(\circ \rightarrow \cdots \rightarrow \stackrel{n}{\circ})=\bigsqcup 
 \{T\in Tilt(\circ \rightarrow \cdots \rightarrow \stackrel{n}{\circ})\mid min\{i^{'}\mid L(i^{'},n)\mid T, i^{'}>0\}=i\}.\]
Thus, by lemma\ref{514} and theorem\ref{515}, $\# \mathcal{T}_{1}$ is equel to
 \begin{align*}
 & 2\left(\frac{1}{n+1}{2n\choose n}-\frac{1}{n}{2(n-1)\choose n-1}\right)+\sum_{i=1}^{n-1}\frac{3(n-i)-1}{2i(n-i+1)}{2(i-1)\choose i-1}{2(n-i)\choose n-i} \\\\
= & 2\left\{\left(\frac{1}{n+1}{2n\choose n}-\frac{1}{n}{2(n-1)\choose n-1}\right)-\sum_{i=1}^{n-1} \frac{1}{i(n-i+1)}{2(i-1)\choose i-1}{2(n-i)\choose n-i}\right\}\\\\
 & \ \ \ \ \ \ \ \ \ \ \ \ \ \ \ \ \   +\sum_{i=1}^{n-1} \frac{3}{2i}{2(i-1)\choose i-1}{2(n-i)\choose n-i} \\\\
= & \frac{3}{2} \sum_{i=1}^{n-1} \frac{1}{i}{2(i-1)\choose i-1}{2(n-i)\choose n-i}.\\
\end{align*}
Now let 
\[a_{n}=\sum_{i=1}^{n}\frac{1}{i}{2(i-1)\choose i-1}{2(n+1-i)\choose n+1-i}\]
 and 
\[f(X)=\left(\sum_{i=1}^{n} \frac{1}{i}{2(i-1)\choose i-1}X^{i} \right)^{2}.\]
Then the coefficient  of $X^{n+1}$ in $f^{'}(X)$ is equal to \[2a_{n}-2{2n\choose n}.\]
 On the other hand, the coefficient of $X^{n+2}$ in $f(X)$ is equal to
\begin{align*}
  & \sum_{i=1}^{n+1}\frac{1}{i(n-i+2)}{2(i-1)\choose i-1}{2(n-i+1)\choose n-i+1}-\frac{2}{n+1}{2n\choose n} \\\\
= & \frac{1}{n+2}{2(n+1)\choose n+1}-\frac{2}{n+1}{2n\choose n}.\\
\end{align*}
 So \[2a_{n} = {2(n+1)\choose n+1}-\frac{2}{n+1}{2n\choose n} = 4{2n\choose n-1}.\]
 We conclude that
 \[\# \mathcal{T}_{1}  =  \frac{3}{2}a_{n-1} = 3{2(n-1)\choose n-2}.\]
\end{proof}
\begin{cor}
 \label{517}
  We have \[\# \mathcal{T}_{0}=\frac{3(n-1)}{n+1}{2(n-1)\choose n-2}.\]
\end{cor}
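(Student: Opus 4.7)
The plan is to realize $\#\mathcal{T}_0$ as $\#Tilt(Q_n)-\#\mathcal{T}_1-\#\mathcal{T}_2$ and then simplify.

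First, the three sets partition $Tilt(Q_n)$: the vertex set $\{1,2,\dots,n-1,n^+,n^-\}$ of $Q_n$ has cardinality $n+1$, so Proposition~\ref{HU prop} gives $\delta(T)\le n+1$, while Corollary~\ref{513} gives $\delta(T)\ge n-1$. Hence $\delta(T)\in\{n-1,n,n+1\}$ for every $T\in Tilt(Q_n)$, and in particular
\[\#\mathcal{T}_0 \;=\; \#Tilt(Q_n) \;-\; \#\mathcal{T}_1 \;-\; \#\mathcal{T}_2.\]

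All three summands on the right are known. The preceding corollary gives $\#\mathcal{T}_1 = 3\binom{2(n-1)}{n-2}$, and Corollary~\ref{513} gives $\#\mathcal{T}_2 = \frac{1}{n}\binom{2(n-1)}{n-1}$. For $\#Tilt(Q_n)$, I would invoke Proposition~3.9 of \cite{FZ} (recalled in the Introduction), applied to $Q_n$ whose Dynkin type is $D_{n+1}$; this gives
\[\#Tilt(Q_n) \;=\; \frac{3(n+1)-4}{2(n+1)}\binom{2n}{n} \;=\; \frac{3n-1}{2(n+1)}\binom{2n}{n}.\]

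Finally, I would reduce each summand to a rational multiple of $\binom{2(n-1)}{n-2}$ via the standard identities $\binom{2n}{n}=\frac{2(2n-1)}{n}\binom{2(n-1)}{n-1}$ and $\binom{2(n-1)}{n-1}=\frac{n}{n-1}\binom{2(n-1)}{n-2}$. Collecting the three contributions under the common denominator $(n+1)(n-1)$ produces a numerator $(3n-1)(2n-1)-3(n^2-1)-(n+1)=3(n-1)^2$, so the combined coefficient collapses to $\frac{3(n-1)}{n+1}$, matching the claim. I do not foresee any conceptual obstacle here: once the partition has been identified, the remaining work is straightforward binomial-coefficient bookkeeping.
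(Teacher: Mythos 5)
Your proposal is correct and is essentially the paper's own argument: the paper likewise computes $\#\mathcal{T}_0$ as $\frac{3n-1}{2(n+1)}\binom{2n}{n}-3\binom{2(n-1)}{n-2}-\frac{1}{n}\binom{2(n-1)}{n-1}$ and simplifies. Your only addition is making explicit (via Proposition~\ref{HU prop} and Corollary~\ref{513}) why $\mathcal{T}_0,\mathcal{T}_1,\mathcal{T}_2$ partition $Tilt(Q_n)$, which the paper leaves implicit.
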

\begin{proof} In fact, \begin{align*}
 \# \mathcal{T}_{0} & = \frac{3n-1}{2(n+1)}{2n\choose n}-3{2(n-1)\choose n-2}-\frac{1}{n}{2(n-1)\choose n-1} \\\\
  & =  \frac{3(n-1)}{n+1}{2(n-1)\choose n-2}. \\
\end{align*}
\end{proof}
\begin{thm} \[\# \vec{\mathcal{K}}(Q)_{1}=(3n-1){2(n-1)\choose n-2}.\]
\end{thm}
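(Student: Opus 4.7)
The plan is to use the fact that every arrow of $\vec{\mathcal{K}}(Q)$ contributes to exactly one $s(T)$ (at its source) and one $e(T')$ (at its target), so summing the degree $\delta(T)=s(T)+e(T)$ over all tilting modules counts each arrow twice:
\[\#\vec{\mathcal{K}}(Q)_{1}=\frac{1}{2}\sum_{T\in Tilt(Q)}\delta(T).\]
Thus the theorem reduces to a weighted count of $Tilt(Q)$ according to the value of $\delta$.

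Next, I would invoke Proposition \ref{HU prop}, which gives $\delta(T)=|Q_{0}|-\#\{a\in Q_{0}\mid (\dimvec T)_{a}=1\}$; since $Q=Q_{n}$ has $n+1$ vertices, we automatically have $\delta(T)\leq n+1$. Combined with Corollary \ref{513}, which shows $\delta(T)\geq n-1$, this gives the disjoint partition
\[Tilt(Q)=\mathcal{T}_{0}\sqcup\mathcal{T}_{1}\sqcup\mathcal{T}_{2},\]
where $\mathcal{T}_{i}$ consists of tilting modules with $\delta(T)=n+1-i$. Hence
\[2\#\vec{\mathcal{K}}(Q)_{1}=(n+1)\#\mathcal{T}_{0}+n\,\#\mathcal{T}_{1}+(n-1)\#\mathcal{T}_{2}.\]

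All three cardinalities have now been established: $\#\mathcal{T}_{2}=\frac{1}{n-1}\binom{2(n-1)}{n-2}$ from Corollary \ref{513}, $\#\mathcal{T}_{1}=3\binom{2(n-1)}{n-2}$ from the cardinality count following Theorem \ref{515}, and $\#\mathcal{T}_{0}=\frac{3(n-1)}{n+1}\binom{2(n-1)}{n-2}$ from Corollary \ref{517}. Substituting these values, every term factors through $\binom{2(n-1)}{n-2}$:
\[(n-1)\cdot\tfrac{1}{n-1}+n\cdot 3+(n+1)\cdot\tfrac{3(n-1)}{n+1}=1+3n+3(n-1)=6n-2.\]
Dividing by $2$ yields the desired formula $(3n-1)\binom{2(n-1)}{n-2}$.

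Since each ingredient is already proved in the preceding results, there is essentially no obstacle: the entire argument is a clean arithmetic assembly. The only conceptual check is that $\delta(T)$ really cannot exceed $n+1$, but this is immediate from Proposition \ref{HU prop} once one remembers that $|Q_{0}|=n+1$ in the paper's labeling of $Q_{n}$.
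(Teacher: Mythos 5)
Your proposal is correct and follows exactly the paper's argument: the paper likewise computes $\#\vec{\mathcal{K}}(Q)_{1}=\frac{1}{2}\bigl\{(n-1)\#\mathcal{T}_{2}+n\#\mathcal{T}_{1}+(n+1)\#\mathcal{T}_{0}\bigr\}$ and substitutes the three cardinalities from Corollary \ref{513}, the corollary to Theorem \ref{515}, and Corollary \ref{517}. Your additional remarks (the handshake identity, and the bound $\delta(T)\leq n+1$ from Proposition \ref{HU prop} with $\#Q_{0}=n+1$) are exactly the justifications the paper leaves implicit.
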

\begin{proof} In fact, $\# \vec{\mathcal{K}}(Q)_{1}$ is equal to
\begin{align*}
  & \frac{1}{2}\left\{ \frac{n-1}{n-1}{2(n-1)\choose n-2}+3n{2(n-1)\choose n-2}+3(n-1){2(n-1)\choose n-2}\right\} \\\\
= & (3n-1){2(n-1)\choose n-2}.\\
\end{align*}
\end{proof}

\begin{exmp} (n=3)

\unitlength 0.1in
\begin{picture}( 46.0000, 22.7000)( 10.0000,-24.3000)
%
\special{pn 8}%
\special{ar 1800 410 50 50  0.0000000 6.2831853}%
%
\special{pn 8}%
\special{ar 1180 800 52 52  0.0000000 6.2831853}%
%
\special{pn 8}%
\special{ar 2370 800 54 54  0.0000000 6.2831853}%
%
\special{pn 8}%
\special{ar 1180 1200 52 52  0.0000000 6.2831853}%
%
\special{pn 8}%
\special{ar 1800 1210 50 50  0.0000000 6.2831853}%
%
\special{pn 8}%
\special{ar 2370 1200 52 52  0.0000000 6.2831853}%
%
\special{pn 8}%
\special{ar 1500 1590 52 52  0.0000000 6.2831853}%
%
\special{pn 8}%
\special{ar 2090 1590 52 52  0.0000000 6.2831853}%
%
\special{pn 8}%
\special{ar 1800 1990 50 50  0.0000000 6.2831853}%
%
\special{pn 8}%
\special{ar 1800 2380 50 50  0.0000000 6.2831853}%
%
\special{pn 8}%
\special{pa 1740 450}%
\special{pa 1250 760}%
\special{fp}%
\special{sh 1}%
\special{pa 1250 760}%
\special{pa 1318 742}%
\special{pa 1296 732}%
\special{pa 1296 708}%
\special{pa 1250 760}%
\special{fp}%
%
\special{pn 8}%
\special{pa 1860 450}%
\special{pa 2300 760}%
\special{fp}%
\special{sh 1}%
\special{pa 2300 760}%
\special{pa 2258 706}%
\special{pa 2256 730}%
\special{pa 2234 738}%
\special{pa 2300 760}%
\special{fp}%
%
\special{pn 8}%
\special{pa 1800 470}%
\special{pa 1810 1130}%
\special{fp}%
\special{sh 1}%
\special{pa 1810 1130}%
\special{pa 1830 1064}%
\special{pa 1810 1078}%
\special{pa 1790 1064}%
\special{pa 1810 1130}%
\special{fp}%
%
\special{pn 8}%
\special{pa 1170 860}%
\special{pa 1180 1150}%
\special{fp}%
\special{sh 1}%
\special{pa 1180 1150}%
\special{pa 1198 1084}%
\special{pa 1178 1098}%
\special{pa 1158 1084}%
\special{pa 1180 1150}%
\special{fp}%
%
\special{pn 8}%
\special{pa 2370 860}%
\special{pa 2370 1140}%
\special{fp}%
\special{sh 1}%
\special{pa 2370 1140}%
\special{pa 2390 1074}%
\special{pa 2370 1088}%
\special{pa 2350 1074}%
\special{pa 2370 1140}%
\special{fp}%
%
\special{pn 8}%
\special{pa 1760 1250}%
\special{pa 1540 1540}%
\special{fp}%
\special{sh 1}%
\special{pa 1540 1540}%
\special{pa 1596 1500}%
\special{pa 1572 1498}%
\special{pa 1564 1476}%
\special{pa 1540 1540}%
\special{fp}%
%
\special{pn 8}%
\special{pa 1220 1250}%
\special{pa 1440 1540}%
\special{fp}%
\special{sh 1}%
\special{pa 1440 1540}%
\special{pa 1416 1476}%
\special{pa 1408 1498}%
\special{pa 1384 1500}%
\special{pa 1440 1540}%
\special{fp}%
%
\special{pn 8}%
\special{pa 1850 1250}%
\special{pa 2050 1540}%
\special{fp}%
\special{sh 1}%
\special{pa 2050 1540}%
\special{pa 2030 1474}%
\special{pa 2020 1496}%
\special{pa 1996 1496}%
\special{pa 2050 1540}%
\special{fp}%
%
\special{pn 8}%
\special{pa 2330 1250}%
\special{pa 2130 1550}%
\special{fp}%
\special{sh 1}%
\special{pa 2130 1550}%
\special{pa 2184 1506}%
\special{pa 2160 1506}%
\special{pa 2150 1484}%
\special{pa 2130 1550}%
\special{fp}%
%
\special{pn 8}%
\special{pa 1540 1640}%
\special{pa 1760 1930}%
\special{fp}%
\special{sh 1}%
\special{pa 1760 1930}%
\special{pa 1736 1866}%
\special{pa 1728 1888}%
\special{pa 1704 1890}%
\special{pa 1760 1930}%
\special{fp}%
%
\special{pn 8}%
\special{pa 2060 1630}%
\special{pa 1840 1930}%
\special{fp}%
\special{sh 1}%
\special{pa 1840 1930}%
\special{pa 1896 1888}%
\special{pa 1872 1888}%
\special{pa 1864 1864}%
\special{pa 1840 1930}%
\special{fp}%
%
\special{pn 8}%
\special{pa 1800 2060}%
\special{pa 1810 2330}%
\special{fp}%
\special{sh 1}%
\special{pa 1810 2330}%
\special{pa 1828 2264}%
\special{pa 1808 2278}%
\special{pa 1788 2264}%
\special{pa 1810 2330}%
\special{fp}%
\put(11.3000,-7.3000){\makebox(0,0)[lb]{$1$}}%
\put(23.4000,-7.3000){\makebox(0,0)[lb]{$2$}}%
\put(10.0000,-13.0000){\makebox(0,0)[lb]{$3$}}%
\put(24.5000,-12.7000){\makebox(0,0)[lb]{$4$}}%
\put(18.9000,-20.6000){\makebox(0,0)[lb]{$5$}}%
\put(16.6000,-26.0000){\makebox(0,0)[lb]{$6$}}%
%
\special{pn 8}%
\special{pa 1240 820}%
\special{pa 1420 950}%
\special{fp}%
\special{sh 1}%
\special{pa 1420 950}%
\special{pa 1378 896}%
\special{pa 1378 920}%
\special{pa 1354 928}%
\special{pa 1420 950}%
\special{fp}%
%
\special{pn 8}%
\special{pa 1260 1220}%
\special{pa 1430 1350}%
\special{fp}%
\special{sh 1}%
\special{pa 1430 1350}%
\special{pa 1390 1294}%
\special{pa 1388 1318}%
\special{pa 1366 1326}%
\special{pa 1430 1350}%
\special{fp}%
%
\special{pn 8}%
\special{pa 2320 820}%
\special{pa 2120 960}%
\special{fp}%
\special{sh 1}%
\special{pa 2120 960}%
\special{pa 2186 938}%
\special{pa 2164 930}%
\special{pa 2164 906}%
\special{pa 2120 960}%
\special{fp}%
%
\special{pn 8}%
\special{pa 2320 1220}%
\special{pa 2130 1340}%
\special{fp}%
\special{sh 1}%
\special{pa 2130 1340}%
\special{pa 2198 1322}%
\special{pa 2176 1312}%
\special{pa 2176 1288}%
\special{pa 2130 1340}%
\special{fp}%
%
\special{pn 8}%
\special{pa 1790 1710}%
\special{pa 1810 1930}%
\special{fp}%
\special{sh 1}%
\special{pa 1810 1930}%
\special{pa 1824 1862}%
\special{pa 1806 1878}%
\special{pa 1784 1866}%
\special{pa 1810 1930}%
\special{fp}%
%
\special{pn 8}%
\special{pa 1550 2170}%
\special{pa 1730 2340}%
\special{fp}%
\special{sh 1}%
\special{pa 1730 2340}%
\special{pa 1696 2280}%
\special{pa 1692 2304}%
\special{pa 1668 2310}%
\special{pa 1730 2340}%
\special{fp}%
%
\special{pn 8}%
\special{ar 4810 400 52 52  0.0000000 6.2831853}%
%
\special{pn 8}%
\special{ar 4810 670 50 50  0.0000000 6.2831853}%
%
\special{pn 8}%
\special{ar 4360 980 52 52  0.0000000 6.2831853}%
%
\special{pn 8}%
\special{ar 5240 980 50 50  0.0000000 6.2831853}%
%
\special{pn 8}%
\special{ar 4810 980 50 50  0.0000000 6.2831853}%
%
\special{pn 8}%
\special{ar 4350 1370 52 52  0.0000000 6.2831853}%
%
\special{pn 8}%
\special{ar 4810 1380 54 54  0.0000000 6.2831853}%
%
\special{pn 8}%
\special{ar 5240 1380 50 50  0.0000000 6.2831853}%
%
\special{pn 8}%
\special{ar 4800 1810 50 50  0.0000000 6.2831853}%
%
\special{pn 8}%
\special{ar 4800 2140 50 50  0.0000000 6.2831853}%
%
\special{pn 8}%
\special{pa 4810 460}%
\special{pa 4820 610}%
\special{fp}%
\special{sh 1}%
\special{pa 4820 610}%
\special{pa 4836 542}%
\special{pa 4816 558}%
\special{pa 4796 546}%
\special{pa 4820 610}%
\special{fp}%
%
\special{pn 8}%
\special{pa 4760 700}%
\special{pa 4410 940}%
\special{fp}%
\special{sh 1}%
\special{pa 4410 940}%
\special{pa 4476 920}%
\special{pa 4454 910}%
\special{pa 4454 886}%
\special{pa 4410 940}%
\special{fp}%
%
\special{pn 8}%
\special{pa 4870 700}%
\special{pa 5190 940}%
\special{fp}%
\special{sh 1}%
\special{pa 5190 940}%
\special{pa 5150 884}%
\special{pa 5148 908}%
\special{pa 5126 916}%
\special{pa 5190 940}%
\special{fp}%
%
\special{pn 8}%
\special{pa 4810 730}%
\special{pa 4810 920}%
\special{fp}%
\special{sh 1}%
\special{pa 4810 920}%
\special{pa 4830 854}%
\special{pa 4810 868}%
\special{pa 4790 854}%
\special{pa 4810 920}%
\special{fp}%
%
\special{pn 8}%
\special{pa 4360 1040}%
\special{pa 4360 1320}%
\special{fp}%
\special{sh 1}%
\special{pa 4360 1320}%
\special{pa 4380 1254}%
\special{pa 4360 1268}%
\special{pa 4340 1254}%
\special{pa 4360 1320}%
\special{fp}%
%
\special{pn 8}%
\special{pa 4770 1050}%
\special{pa 4410 1300}%
\special{fp}%
\special{sh 1}%
\special{pa 4410 1300}%
\special{pa 4476 1278}%
\special{pa 4454 1270}%
\special{pa 4454 1246}%
\special{pa 4410 1300}%
\special{fp}%
%
\special{pn 8}%
\special{pa 4860 1040}%
\special{pa 5190 1300}%
\special{fp}%
\special{sh 1}%
\special{pa 5190 1300}%
\special{pa 5150 1244}%
\special{pa 5148 1268}%
\special{pa 5126 1274}%
\special{pa 5190 1300}%
\special{fp}%
%
\special{pn 8}%
\special{pa 5240 1040}%
\special{pa 5240 1330}%
\special{fp}%
\special{sh 1}%
\special{pa 5240 1330}%
\special{pa 5260 1264}%
\special{pa 5240 1278}%
\special{pa 5220 1264}%
\special{pa 5240 1330}%
\special{fp}%
%
\special{pn 8}%
\special{pa 4410 1030}%
\special{pa 4750 1320}%
\special{fp}%
\special{sh 1}%
\special{pa 4750 1320}%
\special{pa 4712 1262}%
\special{pa 4710 1286}%
\special{pa 4686 1292}%
\special{pa 4750 1320}%
\special{fp}%
%
\special{pn 8}%
\special{pa 5200 1030}%
\special{pa 4870 1310}%
\special{fp}%
\special{sh 1}%
\special{pa 4870 1310}%
\special{pa 4934 1282}%
\special{pa 4912 1276}%
\special{pa 4908 1252}%
\special{pa 4870 1310}%
\special{fp}%
%
\special{pn 8}%
\special{pa 4810 1440}%
\special{pa 4810 1740}%
\special{fp}%
\special{sh 1}%
\special{pa 4810 1740}%
\special{pa 4830 1674}%
\special{pa 4810 1688}%
\special{pa 4790 1674}%
\special{pa 4810 1740}%
\special{fp}%
%
\special{pn 8}%
\special{pa 4390 1430}%
\special{pa 4750 1750}%
\special{fp}%
\special{sh 1}%
\special{pa 4750 1750}%
\special{pa 4714 1692}%
\special{pa 4710 1716}%
\special{pa 4688 1722}%
\special{pa 4750 1750}%
\special{fp}%
%
\special{pn 8}%
\special{pa 5210 1440}%
\special{pa 4860 1750}%
\special{fp}%
\special{sh 1}%
\special{pa 4860 1750}%
\special{pa 4924 1722}%
\special{pa 4900 1716}%
\special{pa 4898 1692}%
\special{pa 4860 1750}%
\special{fp}%
%
\special{pn 8}%
\special{pa 4810 1890}%
\special{pa 4800 2070}%
\special{fp}%
\special{sh 1}%
\special{pa 4800 2070}%
\special{pa 4824 2006}%
\special{pa 4804 2018}%
\special{pa 4784 2002}%
\special{pa 4800 2070}%
\special{fp}%
%
\special{pn 8}%
\special{pa 4800 2210}%
\special{pa 4810 2390}%
\special{fp}%
\special{sh 1}%
\special{pa 4810 2390}%
\special{pa 4826 2322}%
\special{pa 4808 2338}%
\special{pa 4786 2326}%
\special{pa 4810 2390}%
\special{fp}%
\put(46.8000,-25.8000){\makebox(0,0)[lb]{$(6)$}}%
%
\special{pn 8}%
\special{pa 4770 1430}%
\special{pa 4460 1870}%
\special{fp}%
\special{sh 1}%
\special{pa 4460 1870}%
\special{pa 4516 1828}%
\special{pa 4492 1826}%
\special{pa 4482 1804}%
\special{pa 4460 1870}%
\special{fp}%
\put(43.2000,-20.4000){\makebox(0,0)[lb]{$(5)$}}%
%
\special{pn 8}%
\special{pa 4390 280}%
\special{pa 4750 380}%
\special{fp}%
\special{sh 1}%
\special{pa 4750 380}%
\special{pa 4692 344}%
\special{pa 4700 366}%
\special{pa 4680 382}%
\special{pa 4750 380}%
\special{fp}%
%
\special{pn 8}%
\special{pa 5210 280}%
\special{pa 4870 370}%
\special{fp}%
\special{sh 1}%
\special{pa 4870 370}%
\special{pa 4940 372}%
\special{pa 4922 356}%
\special{pa 4930 334}%
\special{pa 4870 370}%
\special{fp}%
%
\special{pn 8}%
\special{pa 4000 770}%
\special{pa 4280 960}%
\special{fp}%
\special{sh 1}%
\special{pa 4280 960}%
\special{pa 4236 906}%
\special{pa 4236 930}%
\special{pa 4214 940}%
\special{pa 4280 960}%
\special{fp}%
%
\special{pn 8}%
\special{pa 5560 710}%
\special{pa 5290 920}%
\special{fp}%
\special{sh 1}%
\special{pa 5290 920}%
\special{pa 5356 896}%
\special{pa 5332 888}%
\special{pa 5330 864}%
\special{pa 5290 920}%
\special{fp}%
\put(41.3000,-3.4000){\makebox(0,0)[lb]{$(1)$}}%
\put(52.5000,-3.3000){\makebox(0,0)[lb]{$(2)$}}%
\put(37.3000,-8.4000){\makebox(0,0)[lb]{$(3)$}}%
\put(56.0000,-8.2000){\makebox(0,0)[lb]{$(4)$}}%
\end{picture}%

\end{exmp}
\section*{Acknowledgment}
The author would like to express his gratitude to Professor Susumu Ariki for his mathematical supports, careful reading of this paper and warm encouragements.
The author also thanks Professor Syu Kato for his mathematical advices and warm encouragements.

\end{document}